\newtheorem{theorem}{Theorem}[section]
\newtheorem{lem}[theorem]{Lemma}
\theoremstyle{definition}
\newtheorem{definition}[theorem]{Definition}
\theoremstyle{remark}
\def\tr{\mathop{\mathrm{tr}}\nolimits}
\begin{document}
\title[gradient pseudo-Ricci solitons]{Gradient pseudo-Ricci solitons of real hypersurfaces}
\author{MAYUKO KON}
\address{Faculty of Education, Shinshu University, 6-Ro, Nishinagano, Nagano City 380-8544, Japan}
\email{mayuko$\_$k@shinshu-u.ac.jp}

\maketitle
\begin{abstract}
Let $M$ be a real hypersurface of a complex space form $M^n(c)$, $c\neq 0$. Suppose that the structure vector field $\xi$ of $M$ is an eigen vector field of the Ricci tensor $S$, $S\xi=\beta\xi$, $\beta$ being a function. We study on $M$, a gradient pseudo-Ricci soliton as an extended concept of Ricci soliton, closely related to pseudo-Einstein real hypersurfaces. We show that a $3$-dimensional ruled real hypersurface of $M^2(c), c<0$ admits a non-trivial gradient pseudo-Ricci soliton.
\end{abstract}

\maketitle

\section[1]{Introduction}

Our object in this paper is to study a gradient pseudo-Ricci soliton on a real hypersurface of a non-flat complex space form under the assumption that the structure vector field of a real hypersurface is an eigen vector field of the Ricci tensor. The concept of gradient pseudo-Ricci solitons are closely related to pseudo-Einstein metric on real hypersurfaces.

A Riemannian manifold $M$ is Einstein if its Ricci tensor is a scalar multiple of the identity at each point. Cartan and Thomas \cite{Th} have shown that an Einstein hypersurface of Euclidean space is a hypersphere if its scalar curvature is positive, and Fialkow \cite{Fi} classified Einstein hypersurfaces in spaces of constant curvature, while Einstein complex hypersurfaces of complex space forms were classified by Smyth \cite{Sm}. 

On the other hand, it is well known that there are no Einstein real hypersurfaces of a non-flat complex space form. So we need the notion of pseudo-Einstein real hypersurfaces. Let $M$ be a real hypersurface of a Kaehlerian manifold $\tilde{M}$ and $(\phi, \xi, \eta, g)$ be an {\it almost contact metric structure} on $M$ (cf. Blair \cite{Bl}, Yano-Kon \cite{YK}). If the Ricci tensor $S$ of $M$ is of the form $SX = aX + b\eta(X)\xi$ for some constants $a$ and $b$, then $M$ is called a {\it pseudo-Einstein real hypersurface}. As for the definition of pseudo-Einstein real hypersurfaces, see also Kon \cite{MK3}. When $b = 0$, $M$ is Einstein. We remark that if $a$ and $b$ are functions, then they become constants. Any pseudo-Einstein real hypersurface has at most three constant principal curvatures and becomes a Hopf hypersurface. For pseudo-Einstein real hypersurfaces of a non-flat complex space form, see Cicel-Ryan \cite{CR}, Kon \cite{Ko} and Montiel \cite{Mo} . In case of dim$M$=3, see also Ivey-Ryan \cite{IR} and Kim-Ryan \cite{KR}.

On the other hand, the geometry of Ricci solitons has been the focus of attention of many mathematicians. In particular, it has become more important after Perelman \cite{Pe} applied Ricci solitons to solve the Poincar\'e conjecture. A Ricci soliton is said to be trivial if the potential vector field is zero or Killing, in which case the metric is Einstein. A Ricci soliton is called a gradient Ricci soliton if its potential field is the gradient of some smooth function on $M$. Later, Ricci soliton was also studied in the theory of various submanifolds of Riemannian manifolds (cf. Chen \cite{Ch}). For example, in \cite{CK}, Cho-Kimura proved that a compact Hopf hypersurface of a non-flat complex space form does not admit a Ricci soliton, and that a ruled real hypersurface of a non-flat complex space form does not admit a gradient Ricci soliton. 

For real hypersurfaces in complex space form, the condition pseudo-Einstein play an important role instead of Einstein. As a corresponding extended concept, we define the notion of pseudo-Ricci solitons and gradient pseudo-Ricci solitons. First we show the existence of non-trivial gradient pseudo-Ricci soliton on $3$-dimensional ruled real hypersurface of a complex hyperbolic space. For important results on ruled real hypersurfaces related to our results, refer to Berndt and D\'iaz-Ramos \cite{BD}, Lohnherr and Reckziegel \cite{LR}. 

For the study of real hypersurfaces in a complex space form, it is popular condition that the real hypersurface is Hopf, that is, the structure vector field $\xi$ is a principal vector of the shape operator. However, if we assume that a real hypersurface is Hopf, gradient Ricci soliton and gradient pseudo-Ricci soliton turn to be trivial. So, in our study, we use the condition $S\xi=\beta\xi$, $\beta$ being a function. We remark that any Hopf hypersurface satisfies this condition and ruled real hypersurfaces are non-Hopf examples which satisfy $S\xi=\beta\xi$ (see Kon \cite{MK}, \cite{MK2}).

Our main theorem is the following

\begin{theorem} Let $M$ be a real hypersurface of a non-flat complex space form $M^n(c)$. Suppose the Ricci tensor $S$ of $M$ satisfies $S\xi=\beta\xi$ for some function $\beta$. If $M$ admits a gradient pseudo-Ricci soliton, then $M$ is a pseudo-Einstein real hypersurface of $M^n(c)$ or a $3$-dimensional ruled real hypersurface of $M^2(c), c<0$.
\end{theorem}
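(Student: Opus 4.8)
The plan is to translate the gradient pseudo-Ricci soliton equation into an equation for the shape operator $A$ of $M$ by means of the Gauss and Codazzi equations, and then to split the analysis according to whether $M$ is a Hopf hypersurface. Recall that for a real hypersurface of $M^{n}(c)$ one has $\nabla_{X}\xi=\phi AX$, so $(\nabla_{X}\eta)Y=g(\phi AX,Y)$, and the Ricci operator is
\[
SX=\frac{c}{4}\bigl\{(2n+1)X-3\eta(X)\xi\bigr\}+(\tr A)\,AX-A^{2}X .
\]
Writing the gradient pseudo-Ricci soliton condition as $\hess f+S=\rho\,g+\sigma\,\eta\otimes\eta$ for suitable functions $\rho,\sigma$ and setting $W=\nabla f$, it becomes
\[
\nabla_{X}W=\rho X+\sigma\,\eta(X)\xi-SX .
\]
Since $W$ is a gradient, $g(\nabla_{X}W,Y)=g(\nabla_{Y}W,X)$; substituting the line above and eliminating $\nabla A$ by the Codazzi equation
\[
(\nabla_{X}A)Y-(\nabla_{Y}A)X=\frac{c}{4}\bigl\{\eta(X)\phi Y-\eta(Y)\phi X-2g(\phi X,Y)\xi\bigr\}
\]
yields an integrability identity linking $A$, $\phi A$, $d\rho$ and $d\sigma$. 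This identity, together with the hypothesis $S\xi=\beta\xi$, is the engine of the proof.

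First suppose $M$ is Hopf, $A\xi=\alpha\xi$. Then $\alpha$ is constant and $\nabla_{\xi}\xi=0$, and, as recalled in the Introduction, combining the soliton equation and its integrability identity with the standard identities for Hopf hypersurfaces forces the soliton to be trivial: $W$ is Killing, so $\hess f\equiv 0$. The soliton equation then reduces to $SX=\rho X+\sigma\,\eta(X)\xi$ with $\rho,\sigma$ functions, which by the remark in the Introduction are constants, so $M$ is a pseudo-Einstein real hypersurface of $M^{n}(c)$. This is the first alternative.

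Now suppose $M$ is not Hopf, and work on the open set $\mathcal W$ where $A\xi=\alpha\xi+\mu U$ with $U$ a unit vector field orthogonal to $\xi$ and $\mu>0$. Substituting $\xi$ into $S\xi=\beta\xi$ and using the Ricci formula shows first that $\operatorname{span}\{\xi,U\}$ is $A$-invariant, $AU=\mu\xi+\nu U$ with $\nu=\tr A-\alpha$, hence also that $A$ leaves $\{\xi,U\}^{\perp}$ invariant with vanishing trace there (one may also quote here the structure results of Kon \cite{MK}, \cite{MK2} on hypersurfaces with $S\xi=\beta\xi$). Next one substitutes $X=\xi$, and then general $X$, in the soliton equation, pairs the resulting vector identities successively with $\xi$, with $U$, with $\phi U$, and with vectors of $\mathcal D_{0}:=\{\xi,U,\phi U\}^{\perp}$, and in each case removes $\nabla A$ via the Codazzi equation. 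The outcome is to show that $A$ annihilates $\{\xi,U\}^{\perp}$ and that $\nu=0$, that is,
\[
A\xi=\alpha\xi+\mu U,\qquad AU=\mu\xi,\qquad A|_{\{\xi,U\}^{\perp}}=0 ,
\]
which is exactly the shape operator of a ruled real hypersurface. With $S$ now explicit, rewriting the soliton equation together with the gradient integrability condition shows it is compatible with a non-constant $f$ only if $\mathcal D_{0}=\{0\}$, so that $n=2$ and $\dim M=3$, and, using also the relation $\beta=\tfrac{c(n-1)}{2}-\mu^{2}$ read off from the Ricci formula, only if $c<0$. Thus $M$, being connected, is a $3$-dimensional ruled real hypersurface of $M^{2}(c)$ with $c<0$, the second alternative.

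The principal obstacle is the non-Hopf step: one must keep the coupled system --- soliton equation, gradient integrability identity, Codazzi equation and $S\xi=\beta\xi$ --- simultaneously under control along the four directions $\xi$, $U$, $\phi U$ and $\mathcal D_{0}$, and verify that it collapses exactly to the ruled shape operator and then, on a ruled hypersurface, survives only in dimension $3$ with $c<0$; in particular ruled real hypersurfaces of $M^{n}(c)$ with $c>0$, or of dimension greater than $3$, carry no gradient pseudo-Ricci soliton. Everything else is substitution of known Hopf identities and of the consequences of $S\xi=\beta\xi$.
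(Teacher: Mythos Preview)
Your overall architecture---split into Hopf and non-Hopf, show the Hopf case is pseudo-Einstein, show the non-Hopf case survives only as a $3$-dimensional ruled hypersurface in $M^{2}(c)$ with $c<0$---matches the paper. But both branches contain genuine gaps where you assert the conclusion rather than prove it.

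In the Hopf branch you write that the integrability identity ``forces the soliton to be trivial: $W$ is Killing, so $\hess f\equiv 0$.'' This needs an argument. The paper's proof (Theorem~6.1) is specific: from $g(R(\xi,e_j)e_j,Df)=(c+\alpha a_j)\,g(\xi,Df)=0$ together with Proposition~A one gets $\xi f=0$; then ${\rm Hess}f(e_j,\xi)=-a_j g(Df,\phi e_j)=0$ kills $g(Df,\phi e_j)$ whenever $a_j\neq 0$; and the remaining case $a_j=0$ requires a separate computation using the Codazzi equation and $(2a_j-\alpha)\bar a_j=\alpha a_j+2c$ to obtain $g(Df,\phi e_j)=0$. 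None of this is ``standard Hopf identities''---the case split on $a_j=0$ is essential.

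The non-Hopf branch is where the real gap lies. You write: ``The outcome is to show that $A$ annihilates $\{\xi,U\}^{\perp}$ and that $\nu=0$,'' i.e.\ $M$ is ruled, and then ``rewriting the soliton equation\ldots shows it is compatible\ldots only if $n=2$.'' But reducing to the ruled shape operator is precisely the content of the theorem, and the paper needs ten lemmas (Lemmas~5.2--5.11) spread over several pages to handle $n\geq 3$. In fact the paper does \emph{not} first prove $A|_{\{\xi,U\}^{\perp}}=0$: after establishing $a_1=0$ (your $\nu=0$, Lemma~5.4) and that some $a_j=0$ (Lemma~5.6), it still has to analyse three separate cases (Lemma~5.8) according to the behaviour of $\phi e_j f$ on the zero-eigenspace, and in Case~3 one reaches configurations with $a_t\neq 0$ on part of $\{\xi,U\}^{\perp}$ that only collapse after a further delicate argument (Lemmas~5.10--5.11) using $h^{2}=-c$ and an explicit positivity contradiction $a_s\alpha=(2a_s^{4}+c^{2}/2)/(2a_s^{2}-c)>0$ against $\sum a_s=0$. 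Your sentence ``pairs the resulting vector identities successively with $\xi$, $U$, $\phi U$, and vectors of $\mathcal D_0$'' does not begin to indicate how these obstructions are overcome; as written this is a statement of what is to be proved, not a proof. (A minor point: your curvature normalisation with $c/4$ differs from the paper's; be consistent.)
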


Moreover, in view of the proof of Theorem 1.1, we see that there does not exist a real hypersurface of a non-flat complex space form $M^n(c)$ admitting a gradient Ricci soliton. Thus, we obtain

\begin{theorem}
Let $M$ be a compact real hypersurface of a non-flat complex space form $M^n(c)$ and suppose that the Ricci tensor $S$ of $M$ satisfies $S\xi=\beta\xi$ for some function $\beta$. Then $M$ does not admit a Ricci soliton.
\end{theorem}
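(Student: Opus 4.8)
The plan is to reduce the statement to the gradient case already settled in the proof of Theorem 1.1. Recall that a \emph{Ricci soliton} on $(M,g)$ consists of a vector field $V$ and a constant $\lambda$ with $\frac{1}{2}\mathcal{L}_V g + S = \lambda g$, and that a \emph{gradient Ricci soliton} is the case $V=\nabla f$; the latter is exactly the specialization of the gradient pseudo-Ricci soliton equation in which the coefficient of the $\eta\otimes\eta$ term vanishes. The essential external ingredient is Perelman's theorem \cite{Pe}: on a compact manifold every Ricci soliton is a gradient Ricci soliton (for shrinking solitons this is Perelman's observation, while compact steady and compact expanding Ricci solitons are Einstein and hence trivially gradient). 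So the first step is: assuming $M$ is compact and admits a Ricci soliton, produce a smooth function $f$ on $M$ with $\hess f + S = \lambda g$.

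The second step is to invoke Theorem 1.1 with this gradient Ricci soliton regarded as a (degenerate) gradient pseudo-Ricci soliton, using the hypothesis $S\xi=\beta\xi$. Theorem 1.1 then forces $M$ to be either a pseudo-Einstein real hypersurface of $M^n(c)$ or a $3$-dimensional ruled real hypersurface of $M^2(c)$ with $c<0$, and I would rule out both. A $3$-dimensional ruled real hypersurface is non-compact --- it is foliated by non-compact totally geodesic complex hypersurfaces of $M^2(c)$, $c<0$ --- so it cannot be the compact $M$. In the pseudo-Einstein case, $M$ is a Hopf hypersurface with constant principal curvatures; as recalled in the introduction, a gradient (pseudo-)Ricci soliton on a Hopf hypersurface is trivial, so $f$ is constant and $\hess f + S = \lambda g$ reduces to $S = \lambda g$, i.e.\ $M$ would be Einstein, which is impossible since a non-flat complex space form carries no Einstein real hypersurface. (Equivalently, one may simply cite the remark following Theorem 1.1, which asserts that no real hypersurface of $M^n(c)$ admits a gradient Ricci soliton.)

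Combining the two steps contradicts the assumption that the compact hypersurface $M$ admits a Ricci soliton, which is the assertion of the theorem. I expect the only step needing genuine care --- as opposed to being a formal corollary of Theorem 1.1 --- to be the passage from arbitrary Ricci solitons to gradient ones: one must apply Perelman's result in the generality required on a compact base, covering shrinking, steady, and expanding cases uniformly. Once that is in place, the remainder is bookkeeping on top of Theorem 1.1 together with the classification of pseudo-Einstein real hypersurfaces (via the non-existence of Einstein real hypersurfaces) and the non-compactness of ruled real hypersurfaces.
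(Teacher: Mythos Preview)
Your proposal is correct and follows essentially the same route as the paper: reduce to the gradient case via Perelman, view a gradient Ricci soliton as a gradient pseudo-Ricci soliton with $\mu=0$, and then use the classification underlying Theorem~1.1 together with the nonexistence of Einstein real hypersurfaces. The only minor divergence is in how you dispose of the ruled alternative: you invoke non-compactness of ruled real hypersurfaces in $M^2(c)$, $c<0$, whereas the paper simply observes (from Theorem~4.1 and Lemma~5.10) that the ruled/non-Hopf branch forces $\mu=-\tfrac{5}{2}c\neq 0$, contradicting $\mu=0$ directly without any appeal to compactness.
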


\section[2]{Real hypersurfaces} 

Let $M^n(c)$ denote the complex projective space of complex dimension $n$ (real dimension $2n$) with constant holomorphic sectional curvature $4c$. We denote by $J$ the almost complex structure of $M^n(c)$. The Hermitian metric of $M^n(c)$ will be denoted by $G$.

Let $M$ be a real $(2n-1)$-dimensional hypersurface immersed in $M^n(c)$. Throughout this paper, we suppose that $M$ is connected. We denote by $g$ the Riemannian metric induced on $M$ from $G$. We take the unit normal vector field $N$ of $M$ in $M^n(c)$. For any vector field $X$ tangent to $M$, we define $\phi$, $\eta$ and $\xi$ by
$$JX=\phi X+\eta(X)N, \hspace{1cm} JN=-\xi,$$
where $\phi X$ is the tangential part of $JX$, $\phi$ is a tensor field of type (1,1), $\eta$ is a 1-form, and $\xi$ is the unit vector field on $M$. We call $\xi$ the {\textit{structure vector field}}. Then they satisfy
$$\phi^2X=-X + \eta(X)\xi,\quad \phi\xi = 0, \quad \eta(\phi X)=0$$
for any vector field $X$ tangent to $M$. Moreover, we have
\begin{eqnarray*}
& &g(\phi X,Y)+g(X,\phi Y) = 0, \quad \eta(X)=g(X,\xi),\\
& &g(\phi X,\phi Y)=g(X,Y)-\eta(X)\eta(Y). 
\end{eqnarray*}
Thus $(\phi,\xi,\eta,g)$ defines an almost contact metric structure on $M$.

We denote by $\tilde{\nabla}$ the operator of covariant differentiation in $M^n(c)$, and by $\nabla$ the one in $M$ determined by the induced metric. Then the {\it Gauss and Weingarten formulas} are given respectively by
$$\tilde{\nabla}_XY={\nabla}_XY+g(AX,Y)N, \hspace{1cm} \tilde{\nabla}_XN = -AX,$$
for any vector fields $X$ and $Y$ tangent to $M$. We call $A$ the {\it shape operator} of $M$. For the almost contact metric structure on $M$, we have
$${\nabla}_X\xi=\phi AX, \hspace{1cm} ({\nabla}_X\phi)Y=\eta(Y)AX-g(AX,Y)\xi.$$

We denote by $R$ the Riemannian curvature tensor field of $M$. Then the {\it equation of Gauss} is given by
\begin{eqnarray*}
R(X,Y)Z&=&c\{g(Y,Z)X - g(X,Z)Y + g(\phi Y,Z)\phi X\\
& & - g(\phi X,Z)\phi Y - 2g(\phi X,Y)\phi Z\} \\
& & + g(AY,Z)AX - g(AX,Z)AY,
\end{eqnarray*}
and the {\it equation of Codazzi} by
$$(\nabla_XA)Y-(\nabla_YA)X = c\{\eta(X)\phi Y - \eta(Y)\phi X - 2g(\phi X, Y)\xi\}.$$
From the equation of Gauss, the Ricci tensor $Ric$ of type (0,2) and $S$ of type (1,1) of $M$ are given by
\begin{eqnarray*}
{\it{Ric}}(X,Y)&=&g(SX,Y)\nonumber\\
&=&(2n+1)cg(X,Y)-3c\eta (X)\eta (Y) \\
& &\quad + {\rm tr}Ag(AX,Y) -g(AX,AY),\nonumber
\end{eqnarray*}
where ${\rm tr}A$ is the trace of $A$. The scalar curvature $Sc$ of $M$ is given by $Sc={\rm tr}S$ and
\begin{eqnarray*}
Sc=4(n^2 - 1)c+ ({\rm tr}A)^2 -{\rm tr}A^2.
\end{eqnarray*}

If the shape operator $A$ of $M$ satisfies$A\xi=\alpha \xi$ for some function $\alpha$, then $M$ is called a \textit{Hopf hypersurface}. By the equation of Codazzi, we have the following result (cf. \cite{Ma}).\\

\noindent{\textbf{Proposition A}.} \textit{Let $M$ be a Hopf hypersurface in $M^n(c)$, $n\geq 2$, If $X\perp \xi$ and $AX=\lambda X$, then $\alpha=g(A\xi,\xi)$ is constant and}
$$(2\lambda-\alpha)A\phi X=(\lambda\alpha +2c)\phi X.$$

From the equation of Codazzi, we have the following lemma (\cite{MK3}).

\begin{lem}
Let $M$ be a real hypersurface in a complex space form $M^n(c)$, $n\geq 3$, $c\neq 0$. If there exists an orthonormal frame $\{\xi, e_1,\cdots,e_{2n-2}\}$ on a sufficiently small neighborhood $\mathcal{N}$ of $x\in M$ such that the shape operator $A$ can be represented as
$$A\xi=\alpha\xi+he_1,\ \ Ae_1=a_1 e_1+h\xi,$$
$$ Ae_j=a_j e_j \ \ (j=2,\cdots, 2n-2),$$
then we have, for any $i,j\geq 2$, $i\neq j$,\\
\begin{eqnarray}
& &(a_j-a_k)g(\nabla_{e_i}e_j,e_k)-(a_i-a_k)g(\nabla_{e_j}e_i,e_k)=0,\\
& &(a_j-a_1)g(\nabla_{e_i}e_j,e_1)-(a_i-a_1)g(\nabla_{e_j}e_i,e_1)\\
& &\quad +h(a_i+a_j)g(\phi e_i,e_j) =0,\nonumber\\
& &\{2c-2a_ia_j+\alpha (a_i+a_j)\}g(\phi e_i,e_j)-hg(\nabla_{e_i}e_j,e_1)\\
& &\quad +hg(\nabla_{e_j}e_i,e_1) =0,\nonumber\\
& &(a_j-a_i)g(\nabla_{e_i}e_j,e_i)-(e_ja_i)=0,\\
& &(a_1-a_i)g(\nabla_{e_i}e_1,e_i)-(e_1a_i)=0,\\
& &(a_1-a_j)g(\nabla_{e_i}e_1,e_j)+(a_j-a_i)g(\nabla_{e_1}e_i,e_j)\\
& &\quad +a_ihg(\phi e_i,e_j) =0,\nonumber\\
& &\{2c-2a_1a_i+\alpha (a_i+a_1)\}g(\phi e_i,e_1)+hg(\nabla_{e_1}e_i,e_1)\\
& &\quad +(e_i h) =0,\nonumber\\
& &h(2a_i+a_1)g(\phi e_i,e_1)+(a_1-a_i)g(\nabla_{e_1}e_i,e_1)+(e_ia_1)=0,\\
& &h g(\nabla_{e_i}e_1,e_i)-(\xi a_i)=0,\\
& &(c+a_i\alpha -a_ia_j)g(\phi e_i,e_j)+h g(\nabla_{e_i}e_1,e_j)\\
& &\quad +(a_j-a_i)g(\nabla_{\xi}e_i,e_j)=0,\nonumber\\
& &(c+a_i\alpha -a_1a_i+h^2)g(\phi e_i,e_1)+(a_1-a_i)g(\nabla_{\xi}e_i,e_1)\\
& &\quad +(e_i h)=0,\nonumber\\
& &h(\alpha -3a_i)g(\phi e_i,e_1)+hg(\nabla_{\xi}e_i,e_1)+(e_i\alpha)=0,\\
& &(e_1 h)-(\xi a_1)=0,\\
& &(e_1 \alpha)-(\xi h)=0,\\
& &(c+a_1\alpha -a_1a_i-h^2)g(\phi e_1,e_i)-(a_1-a_i)g(\nabla_{\xi}e_1,e_i)\\
& &\quad +hg(\nabla_{e_1}e_1,e_i)=0,\nonumber
\end{eqnarray}
for any $i,j\geq 2$, $i\neq j$.
\end{lem}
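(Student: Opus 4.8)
The plan is to obtain every identity by substituting a pair of frame vectors from $\{\xi,e_1,\dots,e_{2n-2}\}$ into the equation of Codazzi and reading off one component. The ingredients are the identity $(\nabla_XA)Y=\nabla_X(AY)-A\nabla_XY$, the formula $\nabla_X\xi=\phi AX$, which gives $g(\nabla_XY,\xi)=-g(Y,\phi AX)$, together with $\phi\xi=0$, $\eta(\phi X)=0$, $g(\phi X,Y)=-g(X,\phi Y)$, and the fact that $\xi$, $e_1$ and each $e_j$ are unit fields, so $g(\nabla_X\xi,\xi)=g(\nabla_X e_1,e_1)=g(\nabla_X e_j,e_j)=0$ for every $X$. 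Since $Ae_j=a_je_j$ for $j\ge 2$, one also has $\phi Ae_j=a_j\phi e_j$ and $g(A\nabla_X e_j,e_k)=a_jg(\nabla_X e_j,e_k)$, while the coefficient $h$ enters only through the off-diagonal parts $A\xi=\alpha\xi+he_1$ and $Ae_1=a_1e_1+h\xi$; in particular $g(A\xi,e_1)=g(Ae_1,\xi)=h$ and $\nabla_\xi\xi=\phi A\xi=h\phi e_1$.

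First I would take $X=e_i$, $Y=e_j$ with $i,j\ge 2$ and $i\ne j$; then $\eta(e_i)=\eta(e_j)=0$, so the equation of Codazzi reads $(\nabla_{e_i}A)e_j-(\nabla_{e_j}A)e_i=-2c\,g(\phi e_i,e_j)\,\xi$. Expanding $(\nabla_{e_i}A)e_j=(e_ia_j)e_j+a_j\nabla_{e_i}e_j-A\nabla_{e_i}e_j$ in the frame and using $g(\nabla_{e_i}e_j,\xi)=-a_ig(\phi e_i,e_j)$, one reads off in turn the components along $e_k$ ($k\ge 2$, $k\ne i,j$), along $e_1$, along $e_i$ (the $e_j$-component giving the same identity after exchanging $i$ and $j$), and along $\xi$. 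These four extractions give the first four displayed identities; the $e_1$- and $\xi$-components are where the terms carrying $h$ and $g(\phi e_i,e_j)$ appear.

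Then I would repeat the computation for $X=e_1$, $Y=e_i$ ($i\ge 2$), for $X=\xi$, $Y=e_i$ ($i\ge 2$), and for $X=\xi$, $Y=e_1$. In the last two families $\eta(X)=1$, so the term $c\,\phi Y$ has to be carried on the right-hand side of Codazzi; and whenever $A\xi$ or $Ae_1$ is differentiated, or $A$ is applied to a $\xi$- or $e_1$-component of some $\nabla_XY$, the coefficient $h$, and through $\nabla_\xi\xi=h\phi e_1$ also $h^2$, enters, which is how the terms involving $h^2$, $e_i\alpha$, $\xi h$ and $\xi a_1$ arise. For each of these pairs one extracts the components along $e_j$ ($j\ge 2$, $j\ne i$), $e_1$, $e_i$ and $\xi$; in particular the $e_1$- and $\xi$-components of the $(\xi,e_1)$ case collapse to the short relations $(e_1h)-(\xi a_1)=0$ and $(e_1\alpha)-(\xi h)=0$. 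Collecting like terms component by component then produces the remaining eleven identities, in the stated order.

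The argument is conceptually immediate, so the only genuine difficulty will be bookkeeping: keeping the signs in $g(\phi X,Y)=-g(\phi Y,X)$ consistent, handling the $\xi$-slots $g(\nabla_XY,\xi)=-g(Y,\phi AX)$ correctly, and not overlooking the unit-vector cancellations $g(\nabla_X\xi,\xi)=g(\nabla_X e_1,e_1)=0$ or the fact that $\nabla_\xi\xi=h\phi e_1$ is in general nonzero here. One must also keep in mind that the frame exists only on the neighbourhood $\mathcal{N}$ and that the functions $a_j$ need not be mutually distinct, so the identities cannot be simplified by dividing through by differences $a_i-a_j$; this is why they are recorded in the above unreduced form.
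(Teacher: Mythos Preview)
Your approach is correct and is precisely what the paper intends: the lemma is stated with the remark ``From the equation of Codazzi, we have the following lemma (\cite{MK3}),'' and the detailed derivation in the cited reference proceeds exactly as you outline, by inserting the frame pairs $(e_i,e_j)$, $(e_1,e_i)$, $(\xi,e_i)$, $(\xi,e_1)$ into the Codazzi equation and reading off the $e_k$-, $e_1$-, $e_i$- and $\xi$-components, using $\nabla_X\xi=\phi AX$ and the special form of $A$ on $\xi$ and $e_1$.
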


We define the subspace $L_{x}\subset T_{x}(M)$ as the smallest subspace that contains $\xi$ and is invariant under the shape operator $A$. Then $M$ is Hopf if and only if $L_{x}$ is one-dimensional at each point $x$. We use the following lemma (see \cite{MK2}).

\begin{lem} Let $M$ be a real hypersurface of $M^n(c)$. If the Ricci tensor $S$ of $M$ satisfies $S\xi=\beta\xi$ for some function $\beta$, then ${\rm dim}L_{x} \leq 2$ at each point $x$ of $M^n(c)$.\end{lem}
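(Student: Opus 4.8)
The plan is to analyze the structure operator $A$ restricted to the $A$-invariant subspace $L_x$ containing $\xi$, and show that the Ricci eigenvector condition $S\xi = \beta\xi$ forces $L_x$ to be at most $2$-dimensional. Since $M$ is real, $L_x$ is spanned by $\xi, A\xi, A^2\xi, \dots$; the point is to bound the length of this chain. Using the Ricci formula from the equation of Gauss, I would compute $S\xi$ explicitly: from
$$S\xi = (2n+1)c\,\xi - 3c\,\xi + (\tr A)A\xi - A^2\xi,$$
the hypothesis $S\xi = \beta\xi$ yields $A^2\xi - (\tr A)A\xi = \big((2n-2)c - \beta\big)\xi$, i.e.
$$A^2\xi = (\tr A)A\xi + \mu\,\xi$$
for the scalar function $\mu = (2n-2)c - \beta$. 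This is the crucial identity: it says $A^2\xi$ is a linear combination of $\xi$ and $A\xi$.

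From here the argument is essentially linear algebra. If $A\xi = \alpha\xi$ (the Hopf case), then $L_x = \mathbb{R}\xi$ is one-dimensional and there is nothing to prove. Otherwise $\xi$ and $A\xi$ are linearly independent, so they span a $2$-dimensional subspace $W_x$; and the identity $A^2\xi = (\tr A)A\xi + \mu\xi$ shows $A(A\xi) \in W_x$, hence $AW_x \subseteq W_x$. Since $W_x$ contains $\xi$ and is $A$-invariant, by minimality of $L_x$ we get $L_x \subseteq W_x$, so $\dim L_x \le 2$. Combined with the trivial lower inequality $\dim L_x \ge 1$ (it contains $\xi \ne 0$), this gives $\dim L_x \le 2$ at every point, which is exactly the claim.

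I expect the main (and really the only) subtlety to be bookkeeping the scalar functions: $\tr A$ and $\mu$ are only functions, not constants, so one must be a little careful that the conclusion "$W_x$ is $A$-invariant" is a genuinely pointwise statement — which it is, since at each fixed point $x$ the identity is an identity of vectors in $T_x M$ and $W_x$ is the pointwise span. There is no real obstacle; the proof is a two-line consequence of writing out the Ricci tensor and reading off that $A$ cannot move $\xi$ "more than two steps." One should perhaps remark that the same computation is what underlies Proposition A and the constancy of $\alpha$ in the Hopf case, but for the present lemma only the pointwise span argument is needed.
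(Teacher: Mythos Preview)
Your argument is correct. The paper does not actually supply its own proof of this lemma; it simply quotes the statement and refers the reader to \cite{MK2}. Your computation is the natural one: writing out $S\xi$ from the Gauss equation gives
\[
A^2\xi = (\tr A)\,A\xi + \big((2n-2)c-\beta\big)\xi,
\]
so $A^2\xi \in \mathrm{span}\{\xi,A\xi\}$, and the pointwise span $W_x=\mathrm{span}\{\xi,A\xi\}$ is $A$-invariant and contains $\xi$; minimality of $L_x$ then forces $L_x\subseteq W_x$. This is precisely the mechanism behind the result, and your handling of the Hopf/non-Hopf dichotomy and the pointwise nature of the argument is fine.

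One purely cosmetic remark: you call the scalar $(2n-2)c-\beta$ by the name $\mu$, but in this paper $\mu$ is already reserved for the constant in the pseudo-Ricci soliton equation $\mathrm{Hess}\,f = Ric - \lambda g - \mu\,\eta\otimes\eta$. To avoid a clash you should rename it (e.g.\ $\nu$ or simply leave it as $(2n-2)c-\beta$).
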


\begin{lem} Let $M$ be a non-Hopf hypersurface of $M^n(c)$. If the Ricci tensor $S$ of $M$ satisfies $S\xi=\beta\xi$ for some function $\beta$, then 
\begin{eqnarray}
{\tr}A=\alpha + a_1, \hspace{1cm} a_2+\cdots+a_{2n-2}=0,
\end{eqnarray}
\begin{eqnarray}
Sc=4(n^2-1)c+2(\alpha a_1 - h^2)-\sum_{j=2}^{2n-2}a_j^2.
\end{eqnarray}

\end{lem}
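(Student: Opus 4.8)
The plan is to use the hypothesis that $M$ is non-Hopf and $S\xi=\beta\xi$ together with the structure provided by Lemma 2.1, which already gives us an adapted orthonormal frame in which $A\xi=\alpha\xi+he_1$, $Ae_1=a_1e_1+h\xi$, and $Ae_j=a_je_j$ for $j\ge 2$. First I would compute $\tr A$ directly: since $A$ acts on $\mathrm{span}\{\xi,e_1\}$ via the $2\times 2$ block $\begin{pmatrix}\alpha & h\\ h & a_1\end{pmatrix}$ and diagonally with eigenvalues $a_j$ on the orthogonal complement, we immediately get $\tr A=\alpha+a_1+\sum_{j=2}^{2n-2}a_j$. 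To pin down $\sum_{j\ge 2}a_j$, I would exploit $S\xi=\beta\xi$ via the Ricci formula $SX=(2n+1)cX-3c\eta(X)\xi+(\tr A)AX-A^2X$. Evaluating at $\xi$ and using $A\xi=\alpha\xi+he_1$, $A^2\xi=A(\alpha\xi+he_1)=\alpha(\alpha\xi+he_1)+h(a_1e_1+h\xi)=(\alpha^2+h^2)\xi+h(\alpha+a_1)e_1$, the $e_1$-component of $S\xi$ is $(\tr A)h-h(\alpha+a_1)=h(\tr A-\alpha-a_1)$. Since $M$ is non-Hopf, $h\not\equiv0$ (on an open set $h\neq0$), so the condition $S\xi=\beta\xi$ forces $\tr A-\alpha-a_1=0$, i.e. $\tr A=\alpha+a_1$; combining with the first computation yields $a_2+\cdots+a_{2n-2}=0$, which is (18).

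For the scalar curvature formula (19), the plan is to substitute $\tr A=\alpha+a_1$ and $a_2+\cdots+a_{2n-2}=0$ into the general expression $Sc=4(n^2-1)c+(\tr A)^2-\tr A^2$ recorded in Section 2. Here $(\tr A)^2=(\alpha+a_1)^2=\alpha^2+2\alpha a_1+a_1^2$, while $\tr A^2=\tr(A^2)$ is the sum of squares of the "eigenvalues" of $A$ counted appropriately; more precisely, from the block-plus-diagonal form, $\tr A^2=\tr\begin{pmatrix}\alpha&h\\h&a_1\end{pmatrix}^2+\sum_{j=2}^{2n-2}a_j^2=(\alpha^2+h^2)+(h^2+a_1^2)+\sum_{j=2}^{2n-2}a_j^2=\alpha^2+a_1^2+2h^2+\sum_{j=2}^{2n-2}a_j^2$. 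Therefore $(\tr A)^2-\tr A^2=2\alpha a_1-2h^2-\sum_{j=2}^{2n-2}a_j^2$, and adding $4(n^2-1)c$ gives exactly (19).

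I do not expect any genuine obstacle here; the only point requiring a word of care is the appeal to $h\neq0$: strictly, $M$ being non-Hopf means $L_x$ is not one-dimensional somewhere, hence by the structure of Lemma 2.1 the function $h$ does not vanish identically, so the open set $\{h\neq0\}$ is nonempty, and one argues the identities there and then extends by continuity to all of $M$ since both sides are smooth functions and $M$ is connected. (Alternatively, the adapted frame of Lemma 2.1 is only guaranteed on the open dense set where the multiplicities of the principal curvatures are locally constant and $h\neq 0$, which suffices for a continuity argument.) The remaining manipulations are purely algebraic bookkeeping with the Ricci and scalar-curvature formulas already displayed in Section 2, so the proof is short.
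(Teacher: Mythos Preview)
Your argument is correct and follows essentially the same route as the paper: both compute $S\xi$ in the adapted frame (the paper invokes Lemma~2.2 to justify this frame, which you should also cite rather than Lemma~2.1 alone), read off the vanishing of the $e_1$-component to force $\tr A=\alpha+a_1$, and then deduce the scalar-curvature formula. The only cosmetic difference is that the paper records the eigenvalues $\beta,\lambda_1,\lambda_j$ of $S$ explicitly and sums them, whereas you plug directly into $Sc=4(n^2-1)c+(\tr A)^2-\tr A^2$; these are equivalent one-line computations. Your continuity remark is unnecessary: the quantities $\alpha,a_1,h,a_j$ are defined via the frame, so the identities are intrinsically local to the non-Hopf set.
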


\begin{proof}
From the assumptions, using Lemma 2.2, we can take an orthonormal frame, given in Lemma 2.1, $\{\xi,e_1,\cdots,e_{2n-2}\}$, locally, such that $A$ is of the form 
$$A\xi=\alpha\xi+he_1,\ \ Ae_1=a_1 e_1+h\xi,$$
$$ Ae_j=a_j e_j, \ \ j=2,\cdots, 2n-2.$$
Then, we obtain
\begin{eqnarray*}
S\xi&=&(2n-2)c\xi +(\tr A)(he_1 +\alpha\xi)-A(he_1 +\alpha\xi)\\
&=&(\tr A-\alpha-a_1)he_1+\{(2n-2)c+(\tr A)\alpha -h^2-\alpha^2\}\xi\\
&=&\beta \xi.
\end{eqnarray*}
So we see that ${\tr}A=\alpha + a_1$ and $a_2+\cdots+a_{2n-2}=0$. Moreover, the Ricci tensor $S$ can be represented as
$$S\xi=\beta\xi,\ \ Se_j=\lambda_j e_j, \ \ j=1,\cdots,2n-2,$$
where $\beta$ and $\lambda_i$ are given by
\begin{eqnarray}
& &\beta=(2n-2)c+(\alpha a_1-h^2),\nonumber\\
& &\lambda_1=(2n+1)c+(\alpha a_1-h^2),\\
& &\lambda_j=(2n+1)c+ \tr A\cdot a_j-a_j^2, \quad j=2,\cdots,2n-2\nonumber.
\end{eqnarray}
Thus the scalar curvature $Sc$ is given by (17).
\end{proof}

Here, we state some basic properties of pseudo-Einstein real hypersurfaces.\\

{\bf Remark.} Let $M$ be a pseudo-Einstein real hypersurface with ${\rm dim}M>3$. Then the Ricci tensor $S$ of $M$ satisfies $SX=aX+b\eta(X)\xi$. Hence, $S\xi=\beta\xi$. Suppose $h\ne 0$, that is, $M$ is not Hopf. Then, by (18), we have $b=-3c$. Therefore, for any vector $X$, we obtain
$$A^2 X-({\rm tr}A)AX+(a-(2n+1)c)X=0.$$
Since $M$ is not totally umbilical, we see that $M$ has two principal curvatures $\lambda$ and $\mu$. Then $\lambda+\mu={\rm tr}A$ and $\lambda\mu=a-(2n+1)c$. On the other hand, ${\rm tr}A=p\lambda+q\mu, p+q=2n-1$. Thus we have $(p-1)\lambda+(q-1)\mu=0$. From Lemma 2.3 of \cite{Ko}, we see ${\rm rank}A>1$. Hence $p, q>1$. Consequently, $\lambda$ and $\mu$ are constant.

Here, for the tangent space at $x$, we put
$$T_{x}(M)=L\oplus T_{\lambda}\oplus T_{\mu},$$
where $L$ is the subspace spanned by $\xi$ and $e_1$, $T_{\lambda}=\{X:AX=\lambda X, X\perp \xi, e_1\},  T_{\mu}=\{X:AX=\mu X, X\perp \xi, e_1\}$. By (16) and $h\neq 0$, we have ${\rm dim}T_{\lambda}, {\rm dim}T_{\mu}\geq 2$.

 Let $X$ and $Y$ be unit vectors of $L$ such that $AX=\lambda X$ and $AY=\mu Y$. We note $\eta(X)\ne0$ and $\eta(Y)\ne0$. For any $Z\in T_{\lambda}$, by the equation of Codazzi, 
$$0=g((\nabla_{X}A)Z,X)-g((\nabla_{Z}A)X,X) =-3c\eta(X)g(Z,\phi X).$$
This implies $g(Z,\phi X)=0$, and hence $\phi X\in T_{\mu}$. Similarly, we have $\phi Y\in T_{\lambda}$. On the other hand, $\phi X=\phi(\eta(X)\xi+g(X,e_1)e_1)=g(X,e_1)\phi e_1$ Since $g(X,e_1)\ne 0$, we have $\phi e_1\in T_{\mu}$. Similarly, $\phi Y=g(Y,e_1)\phi e_1$ and hence $\phi e_1\in T_{\lambda}$. This is a contradiction. 

If ${\rm dim}M=3$, by (16), $a_2=0$. From this and (18), we have $\alpha a_1-h^2=0$ and hence $(e_2 \alpha)a_1+\alpha(e_2 a_1)-2h(e_2 h)=0$. Using the equation of Codazzi, we have $c=0$ (see (7), (8), (11) and (12)) . This is a contradiction. Therefore, $h=0$ and $M$ is a Hopf hypersurface.\\

We introduce an important example of non-Hopf hypersurface. A real hypersurface of a complex space form $M^n(c) (c\ne 0, n\geq 2)$ is called a {\it ruled real hypersurface} if the holomorphic distribution $H(M)=\cup_{x\in M}\{X\in T_{x}(M) : X\perp \xi\}$, subbundle of the tangent bundle $T(M)$, of $M$ is integrable and each maximal integral manifold is locally congruent to a totally geodesic complex hypersurface $M^{n-1}(c)$ of $M^n(c) $.
It is known that every ruled real hypersurface is constructed in the following manner. 

Take a regular curve $\gamma$ in $M^n(c)$, defined on some interval, with tangent vector field $X$. At each point of $\gamma$ there is a unique totally geodesic complex hypersurface which is locally congruent to $M^{n-1}(c)$ of $M^n(c)$ cutting $\gamma$ so as to be orthogonal to $X$ and $JX$. The union of these hypersurfaces is called a {\textit{ruled real hypersurface}} (cf. \cite{Ki}, \cite{LR}, \cite{MAK}). 

A ruled real hypersurface $M$ is characterized by the shape operator by
$$A\xi=\alpha\xi+he_1, \ \ Ae_1=h\xi, \ \ AX=0 $$
for any $X$ orthogonal to $\xi$ and $e_1$, where $e_1$ is a unit vector field orthogonal to $\xi$. The Ricci tensor $S$ of a ruled real hypersurface $M$ satisfies $S\xi=\beta\xi$, where $\beta=(2n-2)c-h^2$.\\

\section[3]{Gradient pseudo-Ricci solitons} 

We now state the definitions of a pseudo-Ricci soliton and a gradient pseudo-Ricci soliton.

A vector field $V$ on a Riemannian manifold $M$ is said to define a {\it Ricci soliton} if it satisfies
$$\frac{1}{2}L_{V}g + Ric = \lambda g,$$
where $L_{V}g$ is the Lie-derivative of the metric tensor $g$ with respect to $V$, $Ric$ is the Ricci tensor of type (0,2) and $\lambda$ is a constant. 
We call the vector field $V$ the {\it potential vector field} of the Ricci soliton. A Ricci soliton $(M, g, V, \lambda)$ is called {\it shrinking}, {\it steady} or {\it expanding} according to $\lambda> 0; \lambda= 0; or \lambda< 0$, respectively. A Ricci soliton is said to be {\it trivial} if the potential vector field $V$ is zero or Killing, in which case the metric is Einstein.

A Ricci soliton $(M, g, V, \lambda)$ is called a {\it gradient Ricci soliton} if its potential field is the gradient of some smooth function $-f$ on $M$, which called the {\it potential function}:
$${\rm Hess}f = Ric - \lambda g.$$
A gradient Ricci soliton $(M, g, f, \lambda)$ is said to be {\it trivial} if its potential function $-f$ is a constant. Trivial gradient Ricci solitons are trivial Ricci solitons since $V= -Df$. It was proved in Perelman \cite{Pe} that if $(M, g, \xi, \lambda)$ is a compact Ricci soliton, the potential field is a gradient of some smooth function $f$ up to the addition of a Killing field. Thus compact Ricci solitons are gradient Ricci solitons.\\

In this article, we study real hypersurfaces of a non-flat complex space form. But, it is well known that there are no Einstein real hypersurfaces of a non-flat complex space form. Therefore we define

\begin{definition} A vector field $V$ on a real hypersurface $M$ of a non-flat complex space form with an almost contact metric structure $(\phi,\xi,\eta,g)$ is said to define a {\it pseudo-Ricci soliton} if it satisfies
$$\frac{1}{2}L_{V}g + Ric = \lambda g + \mu\eta\otimes\eta,$$
where $\lambda$ and $\mu$ are constants. 
\end{definition}

A trivial pseudo-Ricci soliton is one for which $V$ is zero or Killing, in which case the metric is pseudo-Einstein.\\

\begin{definition} A pseudo-Ricci soliton $(M, g, V, \eta, \lambda, \mu)$ is said to be {\it gradient} if its potential field is the gradient of a potential function $-f$ on $M$:
$${\rm Hess}f = Ric - \lambda g - \mu\eta\otimes\eta.$$
\end{definition}
A gradient pseudo-Ricci soliton $(M, g, f, \lambda,\mu)$ is called {\it trivial} if its potential function $-f$ is a constant. Then $M$ is a pseudo-Einstein real hypersurface. 

For a Riemannian manifold $M$ we generally have 
\begin{equation}\label{xx}
\begin{split}
&{\rm Hess}f(X,Y)={\rm Hess}f(Y,X),\\
&{\rm Hess}f(X,Y)=g(\nabla_{X}Df,Y)\\
&=\frac{1}{2}(g(\nabla_{X}Df,Y)+g(\nabla_{Y}Df,X))=\frac{1}{2}(L_{Df}g)(X,Y).
\end{split}
\end{equation}
Let $\{e_j\}$ be an orthonormal basis of $M$. Then the scalar curvature $Sc$ and the Ricci tensor $S$ satisfy
\begin{equation}
\nabla_X Sc=2\sum_j g((\nabla_{e_j}S)e_j,X).
\end{equation}

We now prepare the fundamental formulas for a gradient pseudo-Ricci soliton on a real hypersurface.

\begin{lem} Let $M$ be a real hypersurface of a complex space form $M^n(c), c\ne 0$. If $M$ admits a gradient pseudo-Ricci soliton, then
\begin{eqnarray}
g(\nabla_{X}Df,Y)=g(SX,Y)-\lambda g(X,Y)-\mu\eta(X)\eta(Y),
\end{eqnarray}
\begin{eqnarray}
g(R(X,Y)Df,Z)&=&g((\nabla_{X}S)Y,Z)-g((\nabla_{Y}S)X,Z)\\
& &-\mu g(\phi AX,Y)\eta(Z)-\mu\eta(Y)g(\phi AX,Z) \nonumber\\
& &+\mu g(\phi AY,X)\eta(Z)+\mu \eta(X)g(\phi AY,Z), \nonumber
\end{eqnarray}
\begin{eqnarray}
g(SX,Df)=-\frac{1}{2}\nabla_{X} Sc-\mu g(\phi A\xi,X).
\end{eqnarray}

\end{lem}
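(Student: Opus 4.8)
The plan is to derive the three displayed identities directly from the definition of a gradient pseudo-Ricci soliton, namely $\hess f = Ric - \lambda g - \mu\,\eta\otimes\eta$, together with the structural formulas of Section 2. For the first identity \eqref{xx}-type equation, I would simply unpack $\hess f(X,Y) = g(\nabla_X Df, Y)$ and substitute the soliton equation; since $Ric(X,Y) = g(SX,Y)$ and $(\eta\otimes\eta)(X,Y) = \eta(X)\eta(Y)$, this is immediate and requires no further work.

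For the second identity, I would compute the curvature term $g(R(X,Y)Df,Z)$ by using $R(X,Y)Df = \nabla_X\nabla_Y Df - \nabla_Y\nabla_X Df - \nabla_{[X,Y]}Df$ and feeding in the first identity for each $\nabla_\bullet Df$. Differentiating $g(\nabla_Y Df, Z) = g(SY,Z) - \lambda g(Y,Z) - \mu\eta(Y)\eta(Z)$ covariantly in $X$ produces the term $g((\nabla_X S)Y,Z)$ together with terms coming from $\nabla_X\eta$; here the key input is the structure equation $\nabla_X\xi = \phi AX$, which gives $(\nabla_X\eta)(Y) = g(\phi AX, Y)$. Antisymmetrizing in $X$ and $Y$, the $\lambda g$ and the metric-compatible pieces cancel, the $g((\nabla_X S)Y,Z) - g((\nabla_Y S)X,Z)$ survives, and the $\mu$-terms reorganize exactly into the four $\phi A$ expressions displayed. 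This is the most computation-heavy step, but it is a routine second-covariant-derivative bookkeeping exercise; the only thing to be careful about is tracking which $\eta$ gets differentiated and collecting the $\phi A$ terms with the correct signs.

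For the third identity, I would contract the second one: take $Z = Y = e_j$, sum over an orthonormal frame $\{e_j\}$, and use that $\sum_j g(R(X,e_j)Df, e_j) = -Ric(X, Df) = -g(SX, Df)$ on the left (the once-contracted curvature identity), while on the right $\sum_j \big(g((\nabla_X S)e_j, e_j) - g((\nabla_{e_j}S)X, e_j)\big)$ combines, via $\sum_j g((\nabla_X S)e_j,e_j) = \tfrac12\nabla_X Sc$ and the formula $\nabla_X Sc = 2\sum_j g((\nabla_{e_j}S)e_j, X)$ quoted just above the lemma, into $\tfrac12\nabla_X Sc - \nabla_X Sc = -\tfrac12\nabla_X Sc$. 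The $\mu$-terms contract: $\sum_j g(\phi AX, e_j)\eta(e_j) = \eta(\phi AX) = g(\phi AX,\xi)$ which vanishes since $\phi$ is skew and $\phi\xi = 0$ forces $g(\phi AX,\xi) = -g(AX,\phi\xi)=0$; similarly two more $\mu$-terms drop out by $\sum_j \eta(e_j)g(\phi AX,e_j)=0$ and by $\eta(X)\sum_j g(\phi Ae_j,e_j)$-type cancellations using skew-symmetry of $\phi A$ against the symmetry of $A$ — wait, $\phi A$ need not be skew; rather $\sum_j g(\phi A e_j, e_j) = \tr(\phi A)$, and I would instead keep the one genuinely surviving term $-\mu\,\eta(X)\,g(\phi A\xi, \cdot)$ summed appropriately to land on $-\mu\, g(\phi A\xi, X)$. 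The main obstacle, such as it is, lies precisely in this last bookkeeping: correctly identifying which of the four $\mu$-terms vanishes under contraction and which one survives as $-\mu g(\phi A\xi, X)$; everything else is mechanical substitution.
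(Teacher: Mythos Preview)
Your approach is exactly the paper's: unpack the Hessian for (21), differentiate and antisymmetrize using $\nabla_X\xi=\phi AX$ for (22), then trace with an orthonormal frame and invoke (20) for (23). The only slip is in the trace step: you have the two identities swapped. In fact
\[
\sum_j g((\nabla_X S)e_j,e_j)=\nabla_X Sc
\qquad\text{(trace commutes with $\nabla$)},
\]
while (20) gives
\[
\sum_j g((\nabla_{e_j}S)X,e_j)=\sum_j g((\nabla_{e_j}S)e_j,X)=\tfrac12\nabla_X Sc,
\]
so the difference is $+\tfrac12\nabla_X Sc$, not $-\tfrac12\nabla_X Sc$; combined with $\sum_j g(R(X,e_j)Df,e_j)=-g(SX,Df)$ this yields the correct sign in (23). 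Your handling of the $\mu$-terms is also salvageable once you note that $g(\phi AX,\xi)=-g(AX,\phi\xi)=0$ kills two of the four terms, $\tr(\phi A)=0$ (skew times symmetric) kills the $\eta(X)$ term, and the surviving one is $\mu\sum_j\eta(e_j)g(\phi Ae_j,X)=\mu\,g(\phi A\xi,X)$.
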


\begin{proof}
For any vector fields $X$ and $Y$, by the assumption and (19), we see
\begin{eqnarray*}
{\rm Hess}f(X,Y)&=&g(\nabla_{X}Df,Y)\\
&=&g(SX,Y)-\lambda g(X,Y)-\mu\eta(X)\eta(Y).
\end{eqnarray*}
This shows the first equation. Then, using the standard facts about covariant differentiation,
\begin{eqnarray*}
g(\nabla_X\nabla_Y Df,Z)&=&g(\nabla_X S)Y,Z)+g(S\nabla_X Y,Z)\\
& &\quad -\lambda g(\nabla_X Y,Z) - \mu g(\phi AX,Y)\eta(Z)\\
& &\quad -\mu\eta(\nabla_X Y)\eta(Z)-\mu\eta(Y) g(\phi AX,Z).
\end{eqnarray*}
We also have
\begin{eqnarray*}
g(\nabla_{[X,Y]}Df,Z)=g(S[X,Y],Z)-\lambda g([X,Y],Z)-\mu\eta([X,Y])\eta(Z).
\end{eqnarray*}
Using these equations and
\begin{eqnarray*}
& &g(R(X,Y)Df,Z)\\
& &=g(\nabla_X\nabla_Y Df,Z)-g(\nabla_Y\nabla_X Df,Z)-g(\nabla_{[X,Y]}Df,Z),
\end{eqnarray*}
we have equation (22). 

Let $\{e_j\}$ be an orthonormal basis of $M$. By (22), we obtain
\begin{eqnarray*}
& &\sum^{2n-1}_{j=1} g(R(X,e_j)Df,e_j) \\
& &=\sum_j g((\nabla_{X}S)e_j,e_j)-g((\nabla_{e_j}S)X,e_j)+\mu g(\phi A\xi,X).
\end{eqnarray*}
From the equation above and (20), we have (23).

\end{proof}

Using Lemma 3.3, we have the following lemma.

\begin{lem}
Let $M$ be a non-Hopf hypersurface in a complex space form $M^n(c)$, $c\neq 0$, $n\geq 3$. If the Ricci tensor $S$ of $M$ satisfies $S\xi=\beta\xi$ for some function $\beta$, and $M$ admits a gradient pseudo-Ricci soliton, then we have 
\begin{eqnarray}
& &-(e_1\beta)=c(e_1 f)+(a_1\alpha-h^2)(e_1f),\\
& &(\xi \lambda_1)=-c(\xi f) +(h^2 -a_1\alpha)(\xi f)\\
& &(\lambda_1-\lambda_j)g(\nabla_\xi e_1,e_j) + a_1(\lambda_j-\beta+\mu)g(\phi e_1,e_j)=0,\\
& &-(e_j\beta) +h(\beta-\lambda_j-\mu)g(\phi e_1,e_j)=c(e_j f)+a_j\alpha (e_jf),\\
& &a_j h(e_j f)=(\lambda_j-\lambda_1)g(\nabla_\xi e_j,e_1) + a_j(\lambda_1-\beta+\mu)g(\phi e_j,e_1),\\
& &(\xi \lambda_j)=-(c+a_j \alpha)(\xi f)- a_j h(e_1 f),\\
& &h a_j (e_j f)=\{a_1(\lambda_j -\beta)+a_j(\lambda_1-\beta)+ \mu(a_1+a_j)\}g(\phi e_j,e_1),\\
& &(\lambda_j - \lambda_1)g(\nabla_{e_1} e_j, e_1) - (e_j \lambda_1)\\
& &\quad =c\{(e_j f) -3(\phi e_1 f)g(\phi e_j,e_1)\} + a_1a_j(e_j f),\nonumber\\
& &(e_1\lambda_j) + (\lambda_j - \lambda_1)g(\nabla_{e_j} e_1,e_j)\\
& &\quad = c\{-(e_1 f)+3(\phi e_j f)g(\phi e_1,e_j)\}-a_1a_j(e_1f)-ha_j(\xi f)\nonumber\\
& &\{(a_i+ a_j)(\mu-\beta) + a_j\lambda_i + a_i\lambda_j\}g(\phi e_i,e_j)=0,\\
& &(\lambda_i-\lambda_1)g(\nabla_{e_j} e_i,e_1) + (\lambda_1 - \lambda_j)g(\nabla_{e_i} e_j,e_1)\\
& &\quad =c\{(\phi e_i f)g(\phi e_j, e_1)- (\phi e_j f)g(\phi e_i,e_1)\nonumber\\
& &\qquad -2g(\phi e_i,e_j)(\phi e_1 f)\},\nonumber\\
& &(\lambda_i - \lambda_j)g(\nabla_{e_j} e_i, e_j)-(e_i\lambda_j)\\
& &\quad =c\{(e_i f)-3(\phi e_j f)g(\phi e_i,e_j)\} + a_ia_j (e_i f),\nonumber\\
& &(\lambda_i-\lambda_k)g(\nabla_{e_j} e_i, e_k) -(\lambda_j - \lambda_k)g(\nabla_{e_i} e_j, e_k)\\
& &\quad =c\{(\phi e_i f)g(\phi e_j,e_k) - (\phi e_j f)g(\phi e_i, e_k)\nonumber\\
& &\qquad -2g(\phi e_j. e_i) g(\phi Df, e_k)\}.\nonumber
\end{eqnarray}
\end{lem}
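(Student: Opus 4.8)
The plan is to specialize the three identities of Lemma 3.3 to the adapted orthonormal frame $\{\xi,e_1,\dots,e_{2n-2}\}$ supplied by Lemma 2.1 (whose existence under $S\xi=\beta\xi$ and non-Hopf is guaranteed by Lemma 2.2 and the computation in the proof of Lemma 2.4), using throughout the explicit eigenvalue formulas for $\beta$ and the $\lambda_j$ given in (18). Writing $Df=(\xi f)\xi+\sum_j (e_jf)e_j$, I would first extract scalar (``trace-type'') consequences of (23): pairing $g(S\,\cdot\,,Df)$ against $\xi$ and against each $e_k$, and using $S\xi=\beta\xi$, $Se_k=\lambda_k e_k$, $\phi A\xi=h\phi e_1$, together with the gradient formula $\nabla_X Sc=2\sum_j g((\nabla_{e_j}S)e_j,X)$ and the scalar curvature (17), yields the equations (24)--(25) relating $e_1 f$, $\xi f$ to derivatives of $\beta$ and $\lambda_1$. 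Here one must compute $\frac12\nabla_X Sc$ directly from (17)/(18), so derivatives of $\alpha$, $a_1$, $h$, $a_j$ enter; the Codazzi relations of Lemma 2.1 (especially (13),(14),(15) for the $\xi$- and $e_1$-directions) will be needed to put them in the stated form.

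Next I would feed the curvature identity (22) into the Gauss equation. Substituting $Z$ by $\xi$, $e_1$, $e_j$, $e_k$ in turn and expanding $g(R(X,Y)Df,\xi)$, $g(R(X,Y)Df,e_1)$, etc., via the Gauss equation gives the left-hand sides in terms of $c$, the principal curvatures $a_i$, and components of $Df$; the right-hand sides are $g((\nabla_X S)Y,Z)-g((\nabla_Y S)X,Z)$ plus the explicit $\mu$-terms built from $\phi A$, which on the adapted frame become $\mu$ times expressions in $h$, $a_j$ and the structure $g(\phi e_i,e_j)$. To evaluate $(\nabla_X S)Y-(\nabla_Y S)X$ I would write $\nabla_X S$ in the frame: since $Se_j=\lambda_j e_j$, one has $g((\nabla_X S)e_j,e_k)=(X\lambda_j)\delta_{jk}+(\lambda_k-\lambda_j)g(\nabla_X e_j,e_k)$, and similarly for the $\xi$ slot using $\beta$ and $\nabla_X\xi=\phi AX$. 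Running through the pairs $(X,Y)\in\{(\xi,e_1),(\xi,e_j),(e_1,e_j),(e_i,e_j)\}$ against the test vectors $\xi,e_1,e_j,e_k$ produces exactly the list (26)--(37). Equations (33) and (36)(i.e. the ``$(a_i+a_j)(\mu-\beta)+\dots$'' relation and the pure $e_k$-component relation) come from the $e_i,e_j$ with $Z=\xi$ and $Z=e_k$ cases, while (26),(28),(30),(31),(32) come from the cases involving $e_1$ or $\xi$ mixed with an $e_j$.

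Throughout, the main bookkeeping burden — and the step I expect to be the real obstacle — is organizing the interplay between the many first-derivative terms: derivatives $e_i f$, $\xi f$ of the potential, derivatives $e_i\lambda_j$, $e_i\beta$, $e_i\alpha$, $e_i a_j$, $e_i h$ of the curvature functions, and the connection coefficients $g(\nabla_X e_i,e_j)$. One has to use the Codazzi identities of Lemma 2.1 to eliminate derivatives of the $a_j$'s and $h$ in favour of connection coefficients and curvature, and simultaneously keep track of which $\lambda_j$'s coincide (forcing the $g(\nabla\,\cdot\,)$ terms to drop) versus which are distinct. No single step is deep, but the chance of sign errors or of mislabeling which pair $(X,Y;Z)$ gives which numbered equation is high, so I would proceed systematically case-by-case, recording each substitution, and cross-check the $\mu=0$, $h=0$ specializations against known Hopf-hypersurface formulas as a sanity check before declaring (24)--(37) established.
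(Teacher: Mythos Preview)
Your core plan---feed the curvature identity (22) of Lemma~3.3 into the Gauss equation and evaluate both sides on frame triples $(X,Y,Z)$, using $Se_j=\lambda_je_j$, $S\xi=\beta\xi$ and $\nabla_X\xi=\phi AX$---is exactly the paper's method, and your formula $g((\nabla_X S)e_j,e_k)=(X\lambda_j)\delta_{jk}+(\lambda_k-\lambda_j)g(\nabla_Xe_j,e_k)$ is the right tool for the right-hand side. Two missteps, however, need correcting.

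First, equations (24) and (25) do \emph{not} come from the trace identity (23). They arise from (22) just like all the others, namely from the choices $(X,Y,Z)=(\xi,e_1,\xi)$ and $(\xi,e_1,e_1)$; the paper computes (24) explicitly this way. Applying (23) with $X=\xi$ or $X=e_1$ gives instead $\beta(\xi f)=-\tfrac12\xi Sc$ and $\lambda_1(e_1f)=-\tfrac12 e_1 Sc$, which are the equations of Lemma~3.6, not of the present lemma. So you need neither to differentiate (17) nor to compute $\nabla Sc$ here. (Likewise, (37) belongs to Lemma~3.5, which comes from (21), not (22); the current lemma stops at (36).)

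Second, the planned use of the Codazzi relations from Lemma~2.1 ``to eliminate derivatives of the $a_j$'s and $h$'' is unnecessary at this stage. The equations (24)--(36) are stated with the raw quantities $e_i\beta$, $\xi\lambda_j$, $e_i\lambda_j$, and the connection coefficients $g(\nabla_{e_i}e_j,e_k)$ left unreduced; the proof is nothing more than a direct two-sided evaluation of $g(R(X,Y)Df,Z)$ for each frame triple. The Codazzi identities of Lemma~2.1 are only combined with these equations later, in Section~5.
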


\begin{proof}
From (22), we have
\begin{eqnarray*}
& &g(R(\xi, e_1)Df, \xi)\\
& &=g((\nabla_\xi S)e_1, \xi) - g((\nabla_{e_1} S)\xi, \xi)\\
& &=g(\nabla_\xi \lambda_1 e_1, \xi )- g(S\nabla_\xi e_1,\xi) - g(\nabla_{e_1}\beta\xi,\xi) + g(S\nabla_{e_1} \xi,\xi)\\
& &=-(e_1\beta).
\end{eqnarray*}
On the other hand, by the equation of Gauss,
\begin{eqnarray*}
g(R(\xi, e_1)Df, \xi)=c(e_1 f) + (a_1\alpha-h^2)(e_1 f).
\end{eqnarray*}
Thus we obtain (24). Similarly, substituting $e_1, e_j (j\geq 2), \xi$ into $X, Y, Z$ in (22), we have the other equations.
\end{proof}

Next we prepare the following
\begin{lem}
Let $M$ be a non-Hopf hypersurface in a complex space form $M^n(c)$, $c\neq 0$, $n\geq 3$. If the Ricci tensor $S$ of $M$ satisfies $S\xi=\beta\xi$ for some function $\beta$, and $M$ admits a gradient pseudo-Ricci soliton, then we have 
\begin{eqnarray}
& &\xi(\xi f)-h(\phi e_1 f)=\beta-\lambda-\mu,\\
& &e_1(\xi f)-a_1(\phi e_1 f)=0,\\
& &e_j(\xi f)-a_j(\phi e_j f)=0.
\end{eqnarray}
\end{lem}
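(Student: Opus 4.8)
The plan is to extract these three scalar identities directly from the first fundamental formula of Lemma~3.3, namely equation~(21),
$$g(\nabla_X Df, Y) = g(SX,Y) - \lambda g(X,Y) - \mu\eta(X)\eta(Y),$$
by specializing $X$ and $Y$ to members of the adapted frame $\{\xi, e_1, \dots, e_{2n-2}\}$ of Lemma~2.1, and then using the known covariant derivatives of $\xi$ on a real hypersurface together with the eigenstructure of $S$ computed in Lemma~2.3. Recall that $\nabla_X \xi = \phi A X$, that $A\xi = \alpha\xi + h e_1$ and $Ae_1 = a_1 e_1 + h\xi$, $Ae_j = a_j e_j$ for $j \geq 2$, and that $S\xi = \beta\xi$, $Se_1 = \lambda_1 e_1$, $Se_j = \lambda_j e_j$ with $\lambda_j$ as in~(18).

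First I would write $\xi f = g(Df,\xi) = \eta(Df)$ and observe that $X(\xi f) = X(g(Df,\xi)) = g(\nabla_X Df, \xi) + g(Df, \nabla_X \xi) = g(\nabla_X Df,\xi) + g(Df, \phi A X)$. Applying~(21) with $Y = \xi$ gives $g(\nabla_X Df,\xi) = g(SX,\xi) - \lambda\eta(X) - \mu\eta(X) = g(X, S\xi) - (\lambda+\mu)\eta(X) = (\beta - \lambda - \mu)\eta(X)$, using $S\xi = \beta\xi$ and self-adjointness of $S$. Hence
$$X(\xi f) = (\beta - \lambda - \mu)\eta(X) + g(Df, \phi A X).$$
Now I specialize $X$. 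For $X = \xi$: $A\xi = \alpha\xi + h e_1$, so $\phi A\xi = h\phi e_1$ (since $\phi\xi = 0$), giving $g(Df, \phi A\xi) = h\, g(Df,\phi e_1) = h(\phi e_1 f)$, and $\eta(\xi) = 1$; this yields~(38). For $X = e_1$: $Ae_1 = a_1 e_1 + h\xi$, so $\phi A e_1 = a_1 \phi e_1$, giving $g(Df,\phi A e_1) = a_1(\phi e_1 f)$, and $\eta(e_1) = 0$; this yields~(39). For $X = e_j$, $j\geq 2$: $Ae_j = a_j e_j$, so $\phi A e_j = a_j \phi e_j$, and $\eta(e_j) = 0$; this yields~(40).

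The computation is essentially routine once the frame and the covariant derivative $\nabla_X\xi = \phi A X$ are in hand; there is no serious obstacle. The only point requiring a little care is the sign and placement of the $\mu\eta\otimes\eta$ term: one must note that $\mu\eta(X)\eta(\xi) = \mu\eta(X)$ contributes to the $(\beta-\lambda-\mu)$ coefficient only when paired against $\xi$, which is why $\mu$ appears in~(38) but drops out of~(39) and~(40) (since $\eta(e_1) = \eta(e_j) = 0$). One should also double-check that $g(SX,\xi) = g(X,S\xi)$ — valid because the Ricci operator is symmetric with respect to $g$ — so that $g(Se_1,\xi) = g(e_1,\beta\xi) = 0$ and likewise for $e_j$, confirming that the only surviving term on the right of~(39) and~(40) is the $g(Df,\phi A e_i)$ term. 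Assembling these three specializations gives exactly the claimed identities.
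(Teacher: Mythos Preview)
Your proof is correct and follows essentially the same approach as the paper: both specialize equation~(21) with $Y=\xi$ and use $\nabla_X\xi=\phi AX$ to rewrite $g(\nabla_X Df,\xi)$ in terms of $X(\xi f)$ and $g(Df,\phi AX)$, then plug in $X=\xi,e_1,e_j$. The paper carries out the $X=\xi$ case explicitly and dismisses the others with ``similarly,'' whereas you derive the general identity $X(\xi f)=(\beta-\lambda-\mu)\eta(X)+g(Df,\phi AX)$ first and then specialize---but the underlying computation is identical.
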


\begin{proof}
By (21), we have
$$g(\nabla_\xi Df,\xi)=\beta-\lambda-\mu.$$
On the other hand, calculating the left-hand side implies
$$g(\nabla_\xi Df, \xi)=\nabla_\xi (\xi f) - g(Df, \phi A\xi)=\xi(\xi f)-h(\phi e_1 f).$$
So we have (37). Similarly, substituting $e_1, e_j (j\geq 2), \xi$ into $X, Y$ in (21), we have the other equations.
\end{proof}

At the end of this section, we have the following lemma directly from (23).

\begin{lem}
Let $M$ be a non-Hopf hypersurface in a complex space form $M^n(c)$, $c\neq 0$, $n\geq 3$. If the Ricci tensor $S$ of $M$ satisfies $S\xi=\beta\xi$ for some function $\beta$, and $M$ admits a gradient pseudo-Ricci soliton, then we have 
\begin{eqnarray}
& &\beta(\xi f)=-\frac{1}{2}\xi S_c,\\
& &\lambda_1(e_1 f)=-\frac{1}{2}e_1 S_c,\\
& &\lambda_j(e_j f)=-\frac{1}{2}e_j S_c -h\mu g(\phi e_1,e_j).
\end{eqnarray}
\end{lem}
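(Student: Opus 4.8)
The plan is to read off all three identities by specializing the trace formula (23) of Lemma 3.3 to the three types of frame vector produced in Lemma 2.3. Recall that, under the standing hypotheses, the proof of Lemma 2.3 provides the orthonormal frame $\{\xi,e_1,\dots,e_{2n-2}\}$ of Lemma 2.1 in which simultaneously $A\xi=\alpha\xi+he_1$, $Ae_1=a_1e_1+h\xi$, $Ae_j=a_je_j$, and the Ricci operator is diagonal: $S\xi=\beta\xi$, $Se_1=\lambda_1e_1$, $Se_j=\lambda_je_j$ ($j\geq 2$), with $\lambda_1$, $\lambda_j$ as in (18). Hence for each of these basis vectors $X$ the left-hand side $g(SX,Df)$ of (23) collapses to (eigenvalue)$\times$($X$-component of $Df$), i.e. to $\beta(\xi f)$, $\lambda_1(e_1 f)$ or $\lambda_j(e_j f)$ respectively, since $g(Df,X)=X(f)$.

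Next I would record the form of the correction term $\mu\,g(\phi A\xi,X)$ in this frame. Because $\phi\xi=0$ we have $\phi A\xi=\phi(\alpha\xi+he_1)=h\,\phi e_1$; then $\eta(\phi e_1)=0$ and the skew-symmetry $g(\phi e_1,e_1)=0$ give $g(\phi A\xi,\xi)=g(\phi A\xi,e_1)=0$, whereas $g(\phi A\xi,e_j)=h\,g(\phi e_1,e_j)$ for $j\geq 2$. Using $\nabla_X Sc=X(Sc)$ for the scalar function $Sc$, substituting $X=\xi$ into (23) gives (40), substituting $X=e_1$ gives (41) with the $\mu$-term vanishing, and substituting $X=e_j$ gives (42), the surviving $\mu$-term contributing $-\mu h\,g(\phi e_1,e_j)$.

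There is essentially no obstacle: the lemma is an immediate corollary of (23) once the eigenvector decomposition of $S$ from Lemma 2.3 and the identity $\phi A\xi=h\,\phi e_1$ are in hand. The one point that deserves a moment's attention is the bookkeeping of which components of $\phi A\xi$ are nonzero, so that the $\mu$-correction shows up only in the $e_j$-equation and not in the $\xi$- or $e_1$-equations.
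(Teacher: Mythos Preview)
Your proof is correct and follows exactly the approach indicated by the paper, which simply states that the lemma follows ``directly from (23).'' You have correctly filled in the only details needed: that $S$ is diagonal in the frame from Lemma 2.3 so that $g(SX,Df)$ becomes (eigenvalue)$\cdot(Xf)$, and that $\phi A\xi=h\phi e_1$ so the $\mu$-term vanishes for $X=\xi,e_1$ and equals $h\mu\,g(\phi e_1,e_j)$ for $X=e_j$.
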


\section[4]{Non-trivial example}

In order to describe the distinction of the paper easier, I will first show the existence of an important example of a 3-dimensional non-Hopf hypersurface. 

Let $M$ be a 3-dimensional real hypersurface of a complex space form $M^2(c), c\ne 0$ with $S\xi = \beta\xi$. Then $M$ is a Hopf hypersurface or a non-Hopf hypersurface such that
$$A\xi = \alpha \xi +he_1, \hspace{0.5cm}Ae_1=h\xi, \hspace{0.5cm}Ae_2=0\ \ (\phi e_1=e_2),$$
where $\{\xi, e_1, e_2=\phi e_1\}$ is an orthonormal basis.

In the following, we suppose that $M$ is a non-Hopf hypersurface, that is, $h\ne 0$. By (18), we have
\begin{eqnarray*}
& &\beta=g(S\xi,\xi)=2c+\alpha a_1-h^2,\\
& &\lambda_1=g(Se_1,e_1)=5c+\alpha a_1-h^2, \ \ \lambda_2=g(Se_2,e_2)=5c
\end{eqnarray*}
and the scalar curvature $Sc$ is given by
$$Sc=12c+\alpha a_1-h^2.$$

We remark that the equations in Lemma 2.1 hold except for the case that three distinct integers appear.

\begin{theorem} Let $M$ be a 3-dimensional non-Hopf hypersurface of a non-flat complex space form $M^2(c)$ with $S\xi = \beta\xi$. If $M$ admits a gradient pseudo-Ricci soliton, then $M$ is a ruled real hypersurface with $h^2=-c$, and
$$Df=\frac{1}{2}he_2,\hspace{0.5cm} \lambda=5c,\hspace{0.5cm} \mu=-\frac{5}{2}c.$$
\end{theorem}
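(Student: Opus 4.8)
The plan is to work with the explicit non-Hopf form of $A$ in dimension three, $A\xi=\alpha\xi+he_1$, $Ae_1=a_1e_1+h\xi$, $Ae_2=a_2e_2$ with $\phi e_1=e_2$, and to feed the gradient pseudo-Ricci soliton equations of Lemmas 3.4--3.6 into the structure equations of Lemma 2.1, specialised to $n=2$ (so only the indices $1,2$ and $\xi$ occur, and no ``three distinct integers'' relations are available). The computed invariants are $\beta=2c+\alpha a_1-h^2$, $\lambda_1=5c+\alpha a_1-h^2$, $\lambda_2=5c$, $Sc=12c+\alpha a_1-h^2$; note $\lambda_1-\beta=3c$ and $Sc-\beta=10c$ are constants, which will be used repeatedly.

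First I would extract the consequences of Lemma 3.6. Equation (42) gives $\beta(\xi f)=-\tfrac12\xi Sc=-\tfrac12\xi\beta$, i.e. $(2\beta)(\xi f)=-\xi\beta$; similarly (43) gives $\lambda_1(e_1f)=-\tfrac12 e_1\beta$ and (44) gives $\lambda_2(e_2f)=5c\,(e_2f)=-\tfrac12 e_2\beta - h\mu$, using $g(\phi e_1,e_2)=1$. Combining these with the three equations (37)--(39) of Lemma 3.5, which read $\xi(\xi f)-h(\phi e_1f)=\beta-\lambda-\mu$, $e_1(\xi f)-a_1(\phi e_1f)=0$, $e_2(\xi f)-a_2(\phi e_2f)=0$ (and here $\phi e_1f=(e_2f)$, $\phi e_2f=-(e_1f)$), together with Lemma 3.4's equations (24)--(30), I get an overdetermined system in the unknowns $\alpha,a_1,a_2,h$ and the derivatives of $f$. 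The key structural input is $a_2=0$ in dimension three (this is (16) with $2n-2=2$, already used in the Remark), so in fact $a_2=0$ throughout; this immediately simplifies (27)--(31) enormously. The aim is to deduce first that $\alpha a_1-h^2$ is forced to a specific value, then $a_1=0$ (and $\alpha=0$), which is exactly the ruled condition $A\xi=\alpha\xi+he_1$, $Ae_1=h\xi$, $Ae_2=0$.

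The main obstacle, and the heart of the argument, is handling the connection coefficients $g(\nabla_\xi e_1,e_2)$, $g(\nabla_{e_1}e_1,e_2)$, $g(\nabla_{e_2}e_1,e_2)$ etc.\ that appear in (26), (28), (32)--(36): these are not free, but are constrained both by the Codazzi relations in Lemma 2.1 and by the soliton relations, and one must play the two sets off against each other. Concretely I expect: from (26) with $\lambda_1-\lambda_2=3c\neq0$ one solves for $g(\nabla_\xi e_1,e_2)$ in terms of $a_1(\lambda_2-\beta+\mu)g(\phi e_1,e_2)=a_1(5c-\beta+\mu)$; plugging into the Codazzi relation (11) of Lemma 2.1 (which in the $n=2$ case relates the same coefficient to $\alpha,a_1,c,h$) yields an algebraic relation. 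Doing the analogous elimination with (28), (30) and the remaining Codazzi identities pins down $a_1(5c-\beta+\mu)=0$ and companion equations; chasing these, and using that $Df$ cannot be identically zero (else the soliton is trivial and $M$ pseudo-Einstein, contradicting the Remark's conclusion that a non-Hopf pseudo-Einstein hypersurface of dimension three forces $c=0$), one is driven to $a_1=\alpha=0$. Once $M$ is ruled, $\beta=2c-h^2$, $\lambda_1=5c-h^2$, $\lambda_2=5c$, $Sc=12c-h^2$; re-reading (37)--(39) and (42)--(44) now gives $Df$ proportional to $e_2$, say $Df=\rho e_2$, and $\nabla e_2$-terms fix $\rho=\tfrac12 h$; then (37) yields $\beta-\lambda-\mu$ and (44) yields $5c\rho=-\tfrac12 e_2\beta-h\mu$, from which, together with the requirement that $\lambda,\mu$ be \emph{constants} (forcing $h$, hence $h^2$, constant and then the remaining Codazzi equation (7)/(8) gives $h^2=-c$, so $c<0$), one reads off $h^2=-c$, $\lambda=5c$, $\mu=-\tfrac52 c$. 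I would present the elimination as a sequence of short algebraic lemmas to keep the bookkeeping transparent, flagging at each stage which equation of Lemma 2.1 and which of Lemmas 3.4--3.6 is being combined.
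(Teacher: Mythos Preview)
Your overall arc—reduce $Df$ to the $e_2$-direction, force $a_1=0$ (ruled), then read off $h^2=-c$, $\lambda=5c$, $\mu=-\tfrac52 c$—is the paper's arc, and the ingredients you list are the right ones. But two of your concrete claims are wrong and your proposed elimination does not close.

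First, errors. You write $\lambda_1-\lambda_2=3c$; in fact $\lambda_1-\lambda_2=\alpha a_1-h^2$ (it is $\lambda_1-\beta$ that equals $3c$), so your use of (26) is mis-stated. More importantly, you say the argument drives you to $\alpha=0$. That is false: a ruled real hypersurface has $A\xi=\alpha\xi+he_1$ with $\alpha$ an \emph{unconstrained function}; only $a_1=0$ is needed (and $a_2=0$ is already forced by (16)). The paper never gets $\alpha=0$, and neither can you. Second, the elimination you sketch via (26), (28), (30) only yields $a_1(3c+\mu)=0$ (combine (26) and (28) with $j=2$, $a_2=0$, using $g(\nabla_\xi e_2,e_1)=-g(\nabla_\xi e_1,e_2)$), so you would still have to dispose of the branch $\mu=-3c$; nothing in your outline does this.

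The paper avoids all of this by working directly with (21)--(22) and the Gauss equation rather than routing through Lemmas 3.4--3.6. From $g(R(\xi,e_2)Df,e_2)$ one gets $\xi f=0$ in one line (since $\lambda_2=5c$ is constant and $Ae_2=0$). From $g(R(e_1,e_2)Df,e_2)$ one gets $-4c(e_1f)=(\lambda_2-\lambda_1)g(\nabla_{e_2}e_1,e_2)$, and Codazzi (9) with $a_2=0$ kills the right side, so $e_1f=0$. Hence $Df=me_2$ with $m\neq0$ (else pseudo-Einstein, contradicting non-Hopf), and then (21) with $X=e_1$, $Y=\xi$—equivalently your (38)—gives $-a_1m=0$, so $a_1=0$. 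After that, (21) at $(\xi,\xi)$ gives $h^2-mh=2c-\lambda-\mu$ constant; differentiating along $e_2$ and inserting $e_2h=c+h^2$ from Codazzi (11) (not (7)/(8)) and $e_2m=5c-\lambda$ produces a polynomial in $h$ with constant coefficients, forcing $h$ and $m$ constant; then $e_2h=0$ and $e_2m=0$ give $h^2=-c$ and $\lambda=5c$. Only \emph{after} $h$ is known constant do you compare $mg(\nabla_{e_1}e_2,e_1)=\lambda_1-\lambda$ with $hg(\nabla_{e_1}e_2,e_1)=2c$ (from (15)) to get $m=\tfrac12 h$; your proposal reverses this order. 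Finally (23) gives $\mu=-\tfrac52 c$.
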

\begin{proof} If $M$ admits a gradient pseudo-Ricci soliton, then
$$g(R(\xi,e_2)Df,e_2)=g((\nabla_{\xi}S)e_2,e_2)-g((\nabla_{e_2}S)\xi,e_2)=0.$$
From the equation of Gauss, using $Ae_2=0$, we see $\xi f=0$. Similarly,
$$g(R(e_1,e_2)Df,e_2)=g((\nabla_{e_1}S)e_2,e_2)-g((\nabla_{e_2}S)e_1,e_2)=0$$
reduce
$$-4cg(e_1,Df)=(\lambda_2-\lambda_1)g(\nabla_{e_2}e_1,e_2).$$
By (9) and $a_2=0$, $g(\nabla_{e_2}e_1,e_2)=0$, and hence $e_1 f=0$. If $e_2 f=0$, we conclude $Df=0$ and $M$ is a pseudo-Einstein real hypersurface. So we have $h=0$, which is a contradiction. Thus we can set $Df=me_2$, $m$ being a non-zero function. Then, from (21) we obtain
$$g(\nabla_{e_2}Df,e_2)=e_2 m = \lambda_2-\lambda=5c-\lambda.$$
We also have $g(\nabla_{e_1}Df,\xi)=-a_1 g(Df,e_2)=0$. Since $m=e_2f\neq 0$, we have $a_1=0$ and $M$ is a ruled real hypersurface.

By $g(\nabla_{\xi}Df,\xi)=-hg(Df,e_2)=-mh$ and (21), we obtain 
$$h^2-mh=2c-\lambda-\mu.$$
Hence $h^2-mh$ is a constant. So we have
$$2h(e_2 h)-(e_2m)h-m(e_2 h)=0.$$
On the other hand, by (11), $e_2 h = c+h^2$. Using $e_2 m=5c-\lambda$, 
$$2h^4-(mh)h^2+(-3c+\lambda)h^2-cmh=0.$$
Substituting $mh=-2c+h^2+\lambda+\mu$,
$$h^4-(2c+\mu)h^2+c(2c-\lambda-\mu)=0.$$
Consequently, we see $h$ is a constant, and hence $m$ is also a constant. Then we have $\lambda=5c$ and $c+h^2=0$, $c<0$. Moreover, by (21), we have $g(\nabla_{e_1}Df,e_1)=\lambda_1-\lambda=-h^2=c$. From $Df=me_2$, we see $mg(\nabla_{e_1}e_2,e_1)=c$. By (15), we also have $hg(\nabla_{e_1}e_2,e_1)=2c$. From these equations we obtain $m=\frac{1}{2}h$. From (23), we also have
$$g(Se_2,Df)=-\frac{1}{2}e_2 Sc-\mu h= -\mu h,$$
because $Sc$ is a constant. From these equations, we have $\mu=-\frac{5}{2}c$. These complete our assertion.\\
\end{proof}

In Lemma 1 and Lemma 2 in \cite{MAK}, Maeda, Adachi and Kim studied a shape operator of ruled real hypersurfaces. We consider the case $\nu^2=h^2=|c|$, $c<0$ in Lemma 2. We prove this example satisfies the condition $S\xi=\beta\xi$ for some function $\beta$ and admits a gradient pseudo-Ricci soliton.

\begin{lem}
Let $M$ be a ruled real hypersurface in $M^2(c)$, $c<0$. If $h^2=-c$, then the Ricci tensor satisfies $S\xi=\beta \xi$ for some function $\beta$ and $M$ admits a gradient pseudo-Ricci soliton for
$$\lambda=5c,\ \mu=-\frac{5}{2}c,\ f(\rho(s))=\frac{h}{2}s,$$
where $\rho$ is a geodesic with $\dot{\rho}(0)=e_2(\rho(0))$, which is the integral curve of $e_2$ through the point $\rho(0)$.
\end{lem}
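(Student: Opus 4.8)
The plan is to verify the claim by direct computation on the explicit ruled real hypersurface $M$ in $M^2(c)$, $c<0$, with $h^2=-c$, using the characterization of its shape operator recalled in Section~2, namely $A\xi=\alpha\xi+he_1$, $Ae_1=h\xi$, $Ae_2=0$ with $e_2=\phi e_1$. First I would record that the Ricci tensor of any ruled real hypersurface satisfies $S\xi=\beta\xi$ with $\beta=(2n-2)c-h^2=2c-h^2$; since $h^2=-c$ this gives $\beta=3c$, and similarly $\lambda_1=g(Se_1,e_1)=5c+\alpha a_1-h^2=5c-h^2=6c$ (using $a_1=0$ for a ruled hypersurface) and $\lambda_2=g(Se_2,e_2)=5c$, while $Sc=12c+\alpha a_1-h^2=12c-h^2=13c$. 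In particular $Sc$ is constant — this is the key simplification that makes the soliton equation tractable. Next I would pick the geodesic $\rho$ with $\dot\rho(0)=e_2$; since $Ae_2=0$ we have $\nabla_{e_2}\xi=\phi A e_2=0$, so $e_2$ is parallel along its own integral curves and these integral curves are geodesics of $M$, justifying the definition of $f$ via $f(\rho(s))=\tfrac{h}{2}s$, i.e. $e_2 f=\tfrac{h}{2}$ and $\xi f=e_1 f=0$, so that $Df=\tfrac12 h\,e_2$.

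With $f$ so defined, the substance is to check the gradient pseudo-Ricci soliton equation $\mathrm{Hess}\,f=Ric-\lambda g-\mu\,\eta\otimes\eta$ with $\lambda=5c$, $\mu=-\tfrac52 c$, equivalently the identity $g(\nabla_X Df,Y)=g(SX,Y)-\lambda g(X,Y)-\mu\eta(X)\eta(Y)$ from Lemma~3.3. I would evaluate both sides on the nine pairs $(X,Y)$ drawn from $\{\xi,e_1,e_2\}$. The right-hand side is immediate from the eigenvalue data above: it is diagonal in this frame with entries $\beta-\lambda-\mu=3c-5c+\tfrac52 c=\tfrac12 c$ on $(\xi,\xi)$, $\lambda_1-\lambda=6c-5c=c$ on $(e_1,e_1)$, and $\lambda_2-\lambda=0$ on $(e_2,e_2)$, and zero off-diagonal. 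For the left-hand side I need the Levi-Civita connection coefficients of $M$ in this frame; these come from $\nabla_X\xi=\phi AX$ (which gives $\nabla_\xi\xi=h e_2$, $\nabla_{e_1}\xi=-h e_2$, $\nabla_{e_2}\xi=0$) together with the structure equations of Lemma~2.1 specialized to $n=2$, $\alpha$ a function, $a_1=0$, $a_2=0$, $h^2=-c$. The relevant ones — equations (11), (15) in that list — pin down $e_2 h=c+h^2=0$ (so $h$ is constant, consistent with $h^2=-c$) and $g(\nabla_{e_1}e_2,e_1)=2c/h$, and metric compatibility plus $Df=\tfrac12 h e_2$ then yields $g(\nabla_{e_1}Df,e_1)=\tfrac12 h\cdot g(\nabla_{e_1}e_2,e_1)=c$, matching. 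The entries $g(\nabla_\xi Df,\xi)=\tfrac12 h\,g(\nabla_\xi e_2,\xi)=-\tfrac12 h\,g(e_2,\nabla_\xi\xi)=-\tfrac12 h^2=\tfrac12 c$ and $g(\nabla_{e_2}Df,e_2)=\tfrac12 h\,g(\nabla_{e_2}e_2,e_2)=0$ also match, and I would check the remaining (mostly off-diagonal) entries vanish the same way, using that $h$ is constant so $Df$ is a constant-length field.

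The main obstacle is bookkeeping rather than conceptual: one must be careful extracting the correct connection coefficients from the long list in Lemma~2.1 in the degenerate three-dimensional situation (where, as the paper notes before Theorem~4.1, only the equations not involving three distinct indices survive), and one must confirm that the off-diagonal Hessian components genuinely vanish — in particular that $g(\nabla_\xi Df,e_1)$, $g(\nabla_{e_1}Df,\xi)$, $g(\nabla_{e_2}Df,e_1)$ and their transposes are zero, which uses $\nabla_{e_2}\xi=0$, the antisymmetry $g(\phi X,Y)+g(X,\phi Y)=0$, and the already-established relations among the connection coefficients. Once the nine components check out, the Hessian is symmetric as required (this is automatic once the algebraic identity holds), so $(M,g,f,\lambda,\mu)$ is a gradient pseudo-Ricci soliton; it is non-trivial because $Df=\tfrac12 h e_2\neq0$. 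Finally I would remark that $Df$ being a nonzero parallel-length field that is not Killing confirms non-triviality in the sense of Definition~3.1, completing the proof and, together with Theorem~4.1, establishing the existence statement of the abstract.
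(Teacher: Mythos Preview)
Your approach is correct and matches the paper's: both proofs verify the soliton equation $g(\nabla_X Df,Y)=g(SX,Y)-\lambda g(X,Y)-\mu\eta(X)\eta(Y)$ directly on the basis $\{\xi,e_1,e_2\}$ using the explicit shape operator of the ruled hypersurface, with the paper simply writing ``similarly'' for the pairs you spell out. One small slip to fix: since $Ae_1=h\xi$ for the ruled case with $a_1=0$, you have $\nabla_{e_1}\xi=\phi Ae_1=h\phi\xi=0$, not $-he_2$; this does not affect your diagonal computations but you will need the correct value when you verify the off-diagonal entry $g(\nabla_{e_1}Df,\xi)$.
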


\begin{proof}
Using $A\xi=he_1+\alpha\xi$, $Ae_1=h\xi$ and $Ae_2=0$,
$$S\xi=(2c-h^2)\xi=3c\xi.$$
Next we have 
\begin{eqnarray*}
g(Se_1,e_1)=5c-h^2=6c.
\end{eqnarray*}
On the other hand, we obtain
$${\rm{Hess}}f(e_1,e_1)=g(\nabla_{e_1} Df, e_1)=\frac{h}{2}g(\nabla_{e_1} e_2, e_1).$$
From (15), we have 
$$h g(\nabla_{e_1} e_2, e_1)=c-h^2=-2h^2.$$
Thus we obtain ${\rm{Hess}}(f)(e_1,e_1)=-h^2=c$. Hence we have
$${\rm{Hess}}f(e_1,e_1)=g(Se_1,e_1)-5c g(e_1,e_1)+\frac{5c}{2}\eta(e_1)\eta(e_1).$$
Similarly we obtain 
$${\rm{Hess}}f(X,Y)=g(SX,Y)-5c g(X,Y) +\frac{5c}{2} \eta (X)\eta(Y)$$
when $X$ and $Y$ are one of $e_1,e_2,\xi$, respectively.
\end{proof}

From these results, we have the following

\begin{theorem}
A real hypersurface of $M^2(c)$ admits a gradient pseudo-Ricci soliton and the Ricci tensor $S$ of $M$ satisfies $S\xi=\beta\xi$ for some function $\beta$ if and only if $M$ is one of the following
\begin{itemize}
\item[(1)] pseudo-Einstein real hypersurface,
\item[(2)] a ruled real hypersurface with a unit vector field $e_1$ orthogonal to $\xi$ and the shape operator satisfies
$$A\xi=\alpha\xi \pm \sqrt{|c|}e_1,\ Ae_1=\pm \sqrt{|c|}\xi,\ A\phi e_1=0$$
for some function $\alpha$.
\end{itemize}
\end{theorem}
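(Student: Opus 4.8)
The plan is to establish the biconditional by proving its two implications separately. In each direction the decisive ingredients are already in place, so the only genuinely new work — and where I expect the main difficulty — is the Hopf case of the ``only if'' part.

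For the ``if'' direction: if $M$ is pseudo-Einstein then $SX=aX+b\eta(X)\xi$ with $a,b$ constants, so $S\xi=(a+b)\xi=\beta\xi$, and choosing the potential function $-f$ to be constant together with $\lambda=a$, $\mu=b$ gives ${\rm Hess}f=0=Ric-\lambda g-\mu\,\eta\otimes\eta$, a trivial gradient pseudo-Ricci soliton. If instead $M$ is a ruled real hypersurface as in case~(2), then after possibly replacing $e_1$ by $-e_1$ we have $A\xi=\alpha\xi+he_1$, $Ae_1=h\xi$, $A\phi e_1=0$ with the constant $h=\sqrt{|c|}$; here $c<0$ (a ruled real hypersurface of $M^2(c)$ with $c>0$ admits no gradient pseudo-Ricci soliton, by Theorem~4.1, so the ruled alternative is to be read with $c<0$), whence $h^{2}=-c$ and Lemma~4.2 applies verbatim, producing the gradient pseudo-Ricci soliton with $\lambda=5c$, $\mu=-\frac{5}{2}c$, $f(\rho(s))=\frac{h}{2}s$, together with $S\xi=\beta\xi$.

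For the ``only if'' direction, suppose $M$ admits a gradient pseudo-Ricci soliton with $S\xi=\beta\xi$. If $M$ is non-Hopf, Theorem~4.1 already gives $c<0$ and that $M$ is a ruled real hypersurface with $h^{2}=-c$; since every ruled real hypersurface has $A\xi=\alpha\xi+he_1$, $Ae_1=h\xi$, $A\phi e_1=0$, substituting the constant value $h=\pm\sqrt{-c}=\pm\sqrt{|c|}$ gives precisely case~(2). If $M$ is Hopf, so $A\xi=\alpha\xi$ with $\alpha$ constant by Proposition~A, I would show that the soliton is trivial, i.e.\ $Df=0$. Here $\phi A\xi=0$, so equations (22) and (23) of Lemma~3.3 simplify; I would choose an orthonormal frame $\{\xi,e_1,e_2=\phi e_1\}$ diagonalizing $A$ on the holomorphic distribution, read off the connection coefficients from $\nabla_X\xi=\phi AX$ using the constancy of $\alpha$ and the principal-curvature relation of Proposition~A, and then feed these, via the Gauss equation, into the curvature identity (22) together with the identities (21), (23) and the Codazzi equation; these should force $Df=0$, whereupon ${\rm Hess}f=0$ yields $SX=\lambda X+\mu\,\eta(X)\xi$ with $\lambda,\mu$ constants, so $M$ is pseudo-Einstein — case~(1). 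The sub-case of a repeated principal curvature on the holomorphic distribution is even easier: there $M$ is pseudo-Einstein irrespective of the soliton, since Proposition~A forces that curvature to be constant.

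The hard part is exactly this last step — proving $Df=0$ when $M$ is Hopf. It is the three-dimensional instance of the general fact recorded in the Introduction that a Hopf hypersurface carrying a gradient pseudo-Ricci soliton must be trivial, and carrying it out cleanly calls for careful bookkeeping of the structure functions of a $3$-dimensional Hopf hypersurface. Everything else reduces to Theorem~4.1, Lemma~4.2, and the elementary properties of pseudo-Einstein real hypersurfaces recalled in the Remark of Section~2, so combining the two directions completes the proof.
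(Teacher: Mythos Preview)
Your plan is correct and mirrors the paper: the non-Hopf direction is Theorem~4.1, the converse for the ruled case is Lemma~4.2, and the Hopf case you single out as the ``hard part'' is exactly Theorem~6.1 (stated for all $n\geq 2$, hence covering $n=2$). That proof is shorter than you anticipate --- equation~(22) together with the Gauss equation forces $\xi f=0$, then (21) gives $a_j\,g(Df,\phi e_j)=0$, and for the residual case $a_j=0$ Proposition~A plus Codazzi dispatch $g(Df,\phi e_j)=0$, so $Df=0$ and $M$ is pseudo-Einstein.
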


\section[5]{Non-Hopf hypersurfaces} 

If the Ricci tensor $S$ of a real hypersurface $M$ of a non-flat complex space form satisfies $S\xi=\beta\xi$, by Lemma 2.2, $M$ is a Hopf hypersurface or a non-Hopf hypersurface which satisfies (16), (17) and (18).
In this section, under the assumption $S\xi=\beta\xi$ and $n\geq 3$, we study a gradient pseudo-Ricci soliton on a non-Hopf hypersurfaces $M$. The purpose of this section is to prove the following theorem.

\begin{theorem} Let $M$ be a real hypersurface of a non-flat complex space form $M^n(c)$, $n\geq 3$. Suppose the Ricci tensor $S$ of $M$ satisfies $S\xi=\beta\xi$ for some function $\beta$. If $M$ admits a gradient pseudo-Ricci soliton, then $M$ is a Hopf hypersurface.
\end{theorem}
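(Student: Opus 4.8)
<br>

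The plan is to argue by contradiction: suppose $M$ is non-Hopf and deduce $h=0$. Since $S\xi=\beta\xi$, Lemma 2.2 gives $\dim L_x\le 2$, so on a neighbourhood of any point we have the orthonormal frame $\{\xi,e_1,\dots,e_{2n-2}\}$ of Lemma 2.1 with $A\xi=\alpha\xi+he_1$, $Ae_1=a_1e_1+h\xi$, $Ae_j=a_je_j$ $(j\ge2)$ and $h\ne0$; by Lemma 2.3, $\tr A=\alpha+a_1$, $a_2+\cdots+a_{2n-2}=0$, and the Ricci eigenvalues are $\lambda_1=(2n+1)c+(\alpha a_1-h^2)=\beta+3c$, $\lambda_j=(2n+1)c+(\alpha+a_1)a_j-a_j^2$. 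The raw material is the Codazzi system of Lemma 2.1 together with the three families of gradient-pseudo-Ricci-soliton identities of Lemmas 3.4, 3.5 and 3.6, plus equation (23). The hypothesis $n\ge3$ enters precisely because the holomorphic distribution $\{X:X\perp\xi\}$ then has dimension $2n-2\ge4$, so both Lemma 2.1 and Lemma 3.4 supply identities involving three mutually distinct indices in that distribution — in particular (1) and (36) — which are vacuous when $\dim M=3$ and which are exactly what made the ruled example of Section 4 possible there.

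First I would extract the $f$-free algebraic content. Equation (33), $\{(a_i+a_j)(\mu-\beta)+a_j\lambda_i+a_i\lambda_j\}g(\phi e_i,e_j)=0$, together with (26), becomes, after substituting the formulas for $\lambda_1,\lambda_j$ and using $a_2+\cdots+a_{2n-2}=0$, a set of polynomial relations in the principal curvatures and connection coefficients that must hold wherever $g(\phi e_i,e_j)\ne0$; comparing them with the Codazzi relations (2) and (3) should eliminate the connection coefficients and force the spectrum of $A$ on the holomorphic distribution into a very restricted pattern. The target outcome is that $Df$ can have a nonzero component in the holomorphic distribution only inside the $\phi$-invariant plane spanned by $e_1$ and $\phi e_1$, in parallel with the structure found in the proof of Theorem 4.1.

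Next I would bring in the first derivatives of $f$. From Lemma 3.6, (40)--(42), $\xi Sc$, $e_1 Sc$ and each $e_j Sc$ are expressed through $\xi f,e_1f,e_jf$ (with an extra $h\mu\,g(\phi e_1,e_j)$ term), while $Sc=4(n^2-1)c+2(\alpha a_1-h^2)-\sum_{j\ge2}a_j^2$ by Lemma 2.3; from Lemma 3.4, (24)--(36), the covariant derivatives of $\beta,\lambda_1,\lambda_j$ are tied to $Df$ and the connection coefficients; and from Lemma 3.5, (37)--(39), $\xi(\xi f)$ and $e_j(\xi f)$ are tied to $\phi e_jf$. Feeding the structural information of the previous step into these should collapse everything to a closed first-order system for $(\xi f,e_1f,e_jf)$; combined with the symmetry of ${\rm Hess}f$ and $\nabla_X\xi=\phi AX$ — i.e. \eqref{xx} and (21) — I expect this to force $Df$ to be either identically zero or to lie, on the open set where it is nonzero, in the $\phi$-invariant plane of $e_1$ and $\phi e_1$. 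In the first case $f$ is constant, so $M$ is a non-Hopf pseudo-Einstein hypersurface with $\dim M=2n-1\ge5$, which the Remark in Section 2 rules out. In the second case the surviving component of $Df$ satisfies the same ordinary differential equation met in the proof of Theorem 4.1, whose solutions all have $h$ constant with $h^2=-c$; then the three-index identities (1) and (36), applied to unit vectors of $\{X:X\perp\xi\}$ orthogonal to both $e_1$ and $\phi e_1$ — such vectors exist exactly because $\dim\{X:X\perp\xi\}=2n-2\ge4$ — should produce a relation incompatible with $h^2=-c$, $h\ne0$. Either branch is contradictory, so $h=0$ and $M$ is Hopf.

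The main obstacle is the bookkeeping in the two middle steps: the case analysis according to the multiplicities of the $a_j$ and of the $\lambda_j$, and the simultaneous control of the many connection coefficients $g(\nabla_{e_i}e_j,e_k)$ that the Codazzi and soliton systems couple together. The one genuinely non-routine point is the last one — excluding the higher-dimensional analogue of the ruled soliton, i.e. showing that a putative solution whose gradient lives in the $\phi$-invariant plane of $e_1$ cannot coexist with the extra curvature identities that become available once the holomorphic distribution has dimension at least $4$; this is where the full strength of the hypothesis $n\ge3$ is consumed.
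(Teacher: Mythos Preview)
Your overall architecture matches the paper's: contradiction on a non-Hopf patch, squeeze $Df$ until the three-dimensional ruled configuration of Theorem~4.1 reappears, then exploit the extra directions available when $n\ge3$. The intermediate reduction, however, is longer than you suggest: the paper must first prove $\xi f=0$, then separately prove $a_1=0$ (from (37)--(38); this is a step you do not isolate), then show that some $a_j=0$ for $j\ge2$, and only after decomposing $\langle e_2,\dots,e_{2n-2}\rangle$ into pieces $H_{01},H_{02},H_1$ according to whether $a_j=0$ and whether $\phi e_jf=0$, plus a three-case analysis on $\dim H_{01}$, does it reach $Df=\tfrac{h}{2}\phi e_1$, $h^2=-c$, $\lambda=(2n+1)c$, $\mu=-\tfrac52 c$. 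Equations (1) and (36) play no role in any of this; the work is done by (4), (5), (9), (11), (12), (21), (33), (35).

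The genuine gap is your endgame. You claim the contradiction comes from the three-index identities (1) and (36) applied to vectors orthogonal to $\xi,e_1,\phi e_1$. For $n=3$ that orthogonal complement is two-dimensional, so there are no three distinct such indices and your mechanism is vacuous in precisely the first case the theorem must handle; even for $n\ge4$, combining (1) with (36) on that subspace only yields $(a_i-a_j)(a_i-a_k)\,g(\nabla_{e_j}e_i,e_k)=0$, which does not by itself contradict $h^2=-c$. The paper's contradiction is different and uses only two-index data together with the soliton equation (21): for any $e_s$ with $s\ge3$ (so $a_s\ne0$), the identities (4), (5), (9), (12), (26), (32), (35) and $g(\nabla_{e_s}Df,e_s)=\lambda_s-\lambda$ lead to the quartic $2a_s^4-2a_s^3\alpha+\tfrac12 c^2-a_sh^2\alpha=0$, hence
\[
a_s\alpha=\frac{2a_s^4+\tfrac12 c^2}{2a_s^2-c}>0
\]
since $c<0$. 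Thus every $a_s\alpha$ with $s\ge3$ is strictly positive, which is incompatible with $\sum_{s\ge3}a_s=0$. The role of $n\ge3$ is simply that at least one such $e_s$ exists, not that a triple of them does.
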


The proof of this theorem will follow from a series of lemmas. In the following, we suppose that $M$ is a real hypersurface of $M^n(c), c\ne 0, n\geq 3$ with $S\xi=\beta \xi$, and $M$ admits a gradient pseudo-Ricci soliton
$${\rm Hess}f = Ric - \lambda g - \mu\eta\otimes\eta,$$
where $-f$ on $M$ denote the potential function.

We study the case that $M$ is non-Hopf, and work in an open set where $h\neq 0$. Here, in view of Lemma 2.2, we take an orthonormal basis
$$\{\xi,e_1,\cdots,e_{2n-2}\}$$
locally, such that
$$A\xi=\alpha\xi+he_1,\ \ Ae_1=a_1 e_1+h\xi,\ \ Ae_j=a_j e_j\ \ (j=2,\cdots,2n-2).$$

\begin{lem} $\xi f=0$.
\end{lem}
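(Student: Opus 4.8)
The plan is to extract $\xi f = 0$ from the structural equations already assembled in Lemmas~3.4--3.6 together with the Codazzi relations of Lemma~2.1. The key observation is that the scalar-curvature gradient equations (39)--(41) and the Hessian-of-$f$ equations (37)--(38$''$) give two independent handles on the derivatives of $f$, and comparing them should force $\xi f$ to vanish.

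First I would combine (39), $\beta(\xi f) = -\tfrac12 \xi Sc$, with the explicit expressions for $\beta$ and $Sc$ from Lemma~2.3, namely $\beta = (2n-2)c + (\alpha a_1 - h^2)$ and $Sc = 4(n^2-1)c + 2(\alpha a_1 - h^2) - \sum_{j\ge 2} a_j^2$. Since $c$ is constant, $\xi Sc = 2\,\xi(\alpha a_1 - h^2) - \sum_{j\ge 2}\xi(a_j^2)$, so (39) becomes a relation expressing $\beta(\xi f)$ purely in terms of the $\xi$-derivatives of the principal curvatures. Next I would feed in the Codazzi identities governing those $\xi$-derivatives from Lemma~2.1 — in particular (14) $(e_1 h) = (\xi a_1)$, (15) $(e_1\alpha) = (\xi h)$, and (10) $h\,g(\nabla_{e_i}e_1,e_i) = (\xi a_i)$ for $i\ge 2$ — to rewrite $\xi Sc$ in terms of quantities involving $e_1$-derivatives and $h$. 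In parallel, equation (29) of Lemma~3.4, $(\xi\lambda_j) = -(c + a_j\alpha)(\xi f) - a_j h (e_1 f)$, together with (25), relates the $\xi$-derivatives of the Ricci eigenvalues back to $\xi f$ and $e_1 f$; summing (29) over $j\ge 2$ and using $\sum_{j\ge 2} a_j = 0$ from (16) should collapse the $a_j h(e_1 f)$ terms.

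The decisive step is then to use Lemma~3.6's equations (40)--(41), $\lambda_1(e_1 f) = -\tfrac12 e_1 Sc$ and $\lambda_j(e_j f) = -\tfrac12 e_j Sc - h\mu\, g(\phi e_1, e_j)$, in combination with Lemma~3.5's (37$'$) $e_1(\xi f) = a_1(\phi e_1 f)$ and (37$''$) $e_j(\xi f) = a_j(\phi e_j f)$. Differentiating the relation from (39) along $e_1$ (or conversely differentiating the $e_1$-relation along $\xi$) and using commutators $[\xi, e_1] = \nabla_\xi e_1 - \nabla_{e_1}\xi$ expressed via $\nabla_X\xi = \phi AX$, I expect the cross-terms to cancel against the Codazzi relations (13), (14), (15), leaving an equation of the shape $(\text{nonzero factor})\cdot(\xi f) = 0$. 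The nonzero factor should be traceable to $h \ne 0$ (we are in the open set where $M$ is non-Hopf) and to $c\ne 0$; one must be careful to rule out the degenerate possibility that the coefficient vanishes identically, which would be handled by examining the case $\lambda_1 = \beta$, i.e. $\alpha a_1 - h^2$ constant, separately — but that case was essentially already dispatched in the $3$-dimensional analysis and here $n\ge 3$ gives extra equations from the existence of two distinct indices $j,k\ge 2$.

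The main obstacle I anticipate is bookkeeping: there are many interlocking first-order relations among $\xi f$, $e_1 f$, $e_j f$, $\phi e_1 f$, $\phi e_j f$ and the derivatives of $\alpha, a_1, h, a_j$, and it is easy to produce tautologies rather than new information. The right strategy is to treat $\{\xi f,\, e_1 f,\, e_2 f,\, \ldots\}$ and the connection coefficients $g(\nabla_{e_i}e_j, e_k)$ as the unknowns, use (13) and the $\mu$-terms in Lemma~3.4 to eliminate the connection coefficients $g(\nabla_\xi e_1, e_j)$ and $g(\nabla_{e_1}e_j, e_1)$, and only then take a single well-chosen second derivative to close the system. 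I would expect the cleanest route to isolate $\xi f$ to come from applying $\xi$ to (40) and $e_1$ to (39) and subtracting, so that the $\xi e_1 Sc$ term drops out.
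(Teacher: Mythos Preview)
Your plan contains the right seed but buries it under machinery that is neither needed nor clearly sufficient. You already note that summing (29) over $j\ge 2$ and using $\sum_{j\ge 2} a_j=0$ collapses the $a_jh(e_1f)$ and $a_j\alpha(\xi f)$ terms; carrying this out gives
\[
\sum_{j\ge 2}\xi\lambda_j=-(2n-3)c\,(\xi f).
\]
Since $\lambda_j=(2n+1)c+(\tr A)a_j-a_j^2$ and $\sum a_j=0$, the left side is simply $-\xi\bigl(\sum_{j\ge 2} a_j^2\bigr)$. So the whole lemma reduces to showing $\xi\bigl(\sum_{j\ge 2} a_j^2\bigr)=0$, after which $(2n-3)c\ne 0$ finishes the job. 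That vanishing is a two-line Codazzi fact you do not invoke: combine the paper's (5) and (9),
\[
(a_1-a_i)g(\nabla_{e_i}e_1,e_i)=e_1a_i,\qquad h\,g(\nabla_{e_i}e_1,e_i)=\xi a_i,
\]
to get $a_i(\xi a_i)=a_1(\xi a_i)-h(e_1a_i)$; summing over $i\ge 2$ and using $\sum a_i=0$ yields $\xi\sum a_i^2=0$ immediately. You list (9) (your ``(10)'') but never (5), and without it this direct route is invisible.

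The ``decisive step'' you propose instead --- differentiating (40) along $e_1$ and (41) along $\xi$ and subtracting --- does not close. The mixed terms $\xi(e_1f)$ and $e_1(\xi f)$ differ by $[\xi,e_1]f=(\nabla_\xi e_1)f$ (using $\nabla_{e_1}\xi=a_1\phi e_1$), and after substituting (24), (25) the leading products $(\xi\lambda_1)(e_1f)$ and $(e_1\beta)(\xi f)$ cancel \emph{each other}, not against anything involving $\xi f$ alone. What remains is an identity relating $\sum_k(\lambda_1-\lambda_k)g(\nabla_\xi e_1,e_k)(e_kf)$ to $(\phi e_1)Sc$ and $g(\nabla_\xi e_1,\phi e_1)$, with no isolated $(\text{nonzero})\cdot(\xi f)$ term appearing. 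So the second-derivative route produces a tautology of exactly the kind you warned yourself about, and the degenerate case you flag ($\lambda_1=\beta$) is a red herring: nothing in the actual argument touches it.
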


\begin{proof}

By (5) and (9), we have
$$(a_1-a_i)(\xi a_i)=h(e_1 a_i).$$
Thus we obtain
$$\frac{1}{2}\xi a_i^2=a_1(\xi a_i)-h(e_1 a_i).$$
Using (16), we have
$$\xi (\sum_{j=2}^{2n-2} a_i^2)=0.$$
On the other hand, (16), (18) and (29) imply
$$\xi \sum_{j=2}^{2n-2}a_j^2=(2n-3)c(\xi f)=0.$$
So we obtain $\xi f=0$.\\

\end{proof}

\begin{lem} If $a_1\neq 0$, then $a_1\alpha-h^2$ is constant.
\end{lem}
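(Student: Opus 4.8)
The plan is to deduce the statement almost at once from Lemma 3.5 together with the identity $\xi f=0$ established in Lemma 5.2. Recall that throughout this section $h\neq 0$, and $\lambda,\mu$ are the (constant) soliton parameters; by $(18)$ we have $\beta=(2n-2)c+(a_1\alpha-h^2)$, so it is enough to prove that $\beta$ is constant, and in fact we will obtain $\beta=\lambda+\mu$.

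First I note that, since $\xi f$ vanishes identically on the open set where we work, so do all of its directional derivatives; in particular $e_1(\xi f)=0$ and $\xi(\xi f)=0$. Feeding $e_1(\xi f)=0$ into equation $(38)$ of Lemma 3.5 gives $a_1(\phi e_1 f)=0$. This is the only point where the hypothesis of the lemma is used: since $a_1\neq 0$, we conclude $\phi e_1 f=0$.

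Then I substitute $\xi(\xi f)=0$ and $\phi e_1 f=0$ into equation $(37)$ of Lemma 3.5, which collapses to $\beta-\lambda-\mu=0$. Because $\lambda$ and $\mu$ are constants, $\beta$ is constant, hence $a_1\alpha-h^2=\beta-(2n-2)c$ is constant, which is the assertion. I do not expect any real obstacle here: the whole content is the observation that the vanishing of $\xi f$ propagates to its $e_1$- and $\xi$-derivatives.

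If one prefers an argument not invoking Lemma 3.5, the same computation can be carried out directly: differentiate $\xi f=g(Df,\xi)$ along an arbitrary $X$, use $\nabla_X\xi=\phi AX$ and the Hessian equation $(21)$ in the form $g(\nabla_X Df,\xi)=(\beta-\lambda-\mu)\eta(X)$ (which uses $S\xi=\beta\xi$), obtaining $(\beta-\lambda-\mu)\eta(X)+g(Df,\phi AX)=0$; taking $X=e_1$ and using $\phi Ae_1=a_1\phi e_1$ together with $a_1\neq 0$ gives $g(Df,\phi e_1)=0$, and then $X=\xi$ gives $\beta-\lambda-\mu=0$, as before.
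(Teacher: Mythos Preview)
Your proof is correct and follows essentially the same route as the paper: both use $\xi f=0$ from Lemma~5.2 to kill the first terms in equations~(37) and~(38), obtain $a_1(\phi e_1 f)=0$ from~(38), cancel $a_1$ by hypothesis, and then read off $\beta=\lambda+\mu$ from~(37). Your closing paragraph simply rederives~(37)--(38) from~(21), which is how Lemma~3.5 is proved in the first place, so there is no genuine methodological difference.
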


\begin{proof}
Using Lemma 5.2 and (37), we have
$$-h(\phi e_1 f)=(2n-2)c+a_1\alpha-h^2-\lambda - \mu.$$
By (38), we get 
\begin{equation}
a_1(\phi e_1 f)=0,
\end{equation}
Since $a_1\neq 0$, we have $\phi e_1 f=0$. Thus we obtain
$$(2n-2)c + a_1\alpha - h^2 -\lambda-\mu=0.$$
Since $\lambda$, $\mu$ and $c$ are constant, $a_1\alpha- h^2$ is also constant.
\end{proof}

\begin{lem} $a_1=0$.
\end{lem}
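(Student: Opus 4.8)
The plan is to argue by contradiction: assume the open set $U=\{x\in M:a_1(x)\neq 0\}$ is nonempty, work throughout on $U$ (where the adapted frame of Lemma 2.2 is available), and derive a contradiction.

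First I would collect what the previous lemmas already yield on $U$. By Lemma 5.2 we have $\xi f\equiv 0$, so $e_1(\xi f)=0$ and (38) gives $a_1(\phi e_1 f)=0$; since $a_1\neq 0$ this forces $\phi e_1 f=0$, and then (37) gives $\beta=\lambda+\mu$, a constant, so that $a_1\alpha-h^2=\beta-(2n-2)c$ and $\lambda_1=\beta+3c$ are constants too (this is precisely the content of the proof of Lemma 5.3). Plugging $e_1\beta=0$ into (24) then produces
$$(\beta-(2n-3)c)\,(e_1f)=0 .$$
Letting $j_0$ be the index with $e_{j_0}=\phi e_1$ (one first checks, via the Codazzi equation as for ruled hypersurfaces, that $\phi e_1$ is an eigendirection of $A$; otherwise one replaces each $e_{j_0}$-identity below by its $g(\phi e_1,e_j)$-weighted sum over $j$), the relations (27), (39), (42) at $j=j_0$ together with $\xi f=0$ and $\phi e_1 f=0$ give $\lambda_{j_0}=\beta-\mu$, $a_{j_0}(e_1f)=0$, and $e_{j_0}Sc=-2h\mu$; and since $Sc=\beta+\lambda_1+\sum_{j\geq 2}\lambda_j=(\mathrm{const})-\sum_{j\geq 2}a_j^2$ by (16) and (18), the identities (41)--(42) become relations among the $a_j$ and their derivatives alone.

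Next I would split on $(\beta-(2n-3)c)(e_1f)=0$. If $e_1f\equiv 0$ on a subdomain, then $Df$ lies in $\mathrm{span}\{e_j:e_j\perp\xi,e_1,\phi e_1\}$; this span is empty when $n=2$, and for $n\geq 3$ running (28), (30)--(36) for triples drawn from $\{e_1,\phi e_1,e_j\}$ against the Codazzi system of Lemma 2.1 should force the remaining $e_jf$ to vanish, so that $Df\equiv 0$. Then the gradient pseudo-Ricci soliton is trivial, $M$ is pseudo-Einstein, and being non-Hopf this contradicts the Remark after Lemma 2.3. If instead $e_1f\not\equiv 0$, then $\beta=(2n-3)c$ there, hence $a_1\alpha-h^2=-c$ and $\lambda_1=2nc$, while $a_{j_0}(e_1f)=0$ gives $a_{j_0}=0$, so by (18) $\lambda_{j_0}=(2n+1)c$, which together with $\lambda_{j_0}=\beta-\mu$ yields $\mu=-4c$ and $\lambda=(2n+1)c$. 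Substituting $a_{j_0}=0$, $\nabla_{\phi e_1}\xi=0$ and $a_1\alpha-h^2=-c$ into the Codazzi identity $(\nabla_{e_1}A)\phi e_1-(\nabla_{\phi e_1}A)e_1=-2c\xi$, and combining with (41) and the relations $e_1h=\xi a_1$, $e_1\alpha=\xi h$ of Lemma 2.1, should overdetermine $\alpha,h,a_1$ and force $c=0$, contrary to $c\neq 0$. Either way $U=\emptyset$, i.e.\ $a_1\equiv 0$.

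I expect the case $e_1f\not\equiv 0$ to be the main obstacle: although many quantities get pinned down, the surviving equations still couple $\alpha,h,a_1,a_j$ with several connection coefficients $g(\nabla_{e_i}e_j,e_k)$, and the contradiction emerges only after these are eliminated systematically via Lemma 2.1; one must also dispose separately of the degenerate subcases where some $a_j$ vanishes or $a_1\alpha-h^2=0$ (so $\lambda_j=\lambda_1$), which arise along the way in both branches.
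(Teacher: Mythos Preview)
Your opening moves (contradict $a_1\neq 0$, use (38) to get $\phi e_1 f=0$, and (37) to make $\beta=\lambda+\mu$ constant) match the paper. But both branches of your split on $e_1f$ are only sketched, and the key device is missing. The paper does not split on $e_1f$; it proves $e_1f=0$ directly, so your harder branch never occurs. The mechanism is a summation trick: from (39) and $\xi f=0$ one has $a_j(\phi e_jf)=0$, so $\phi e_jf=0$ whenever $a_j\neq 0$. If every $a_j\neq 0$ for $j\geq 2$ this already forces $Df=0$ (all $\phi e_jf$ vanish), contradicting non-Hopf; hence some $a_j=0$. For each such $j$, (32) together with (9) (which gives $g(\nabla_{e_j}e_1,e_j)=0$) yields $c(e_1f)=3c(\phi e_jf)g(\phi e_1,e_j)$. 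Summing over the $p$ indices with $a_j=0$, the right side extends to the full sum $\sum_{j}3c(\phi e_jf)g(\phi e_1,e_j)=-3c(e_1f)$ precisely because $\phi e_jf=0$ on the remaining indices; thus $(p+3)c(e_1f)=0$ and $e_1f=0$. The same summation applied to (35) with (4) gives $e_if=0$ for every $i\neq j$, and a short endgame on $p\geq 2$ versus $p=1$ finishes. This trick is exactly what your phrase ``running (28), (30)--(36) against the Codazzi system should force the remaining $e_jf$ to vanish'' stands in for, but it is the crux of the argument, not a routine elimination.

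A separate gap concerns your assumption that $\phi e_1$ is one of the eigenvectors $e_{j_0}$. This is not known here, and your proposed fix of passing to $g(\phi e_1,e_j)$-weighted sums of (27), (39), (42) does not deliver the relations you use: the weighted version of (27) involves $\sum_j\lambda_j\,g(\phi e_1,e_j)^2$ rather than a single $\lambda_{j_0}$, and the weighted version of (39) yields only $g(Df,\phi A\phi e_1)=0$, not $a_{j_0}(e_1f)=0$. Since your branch $e_1f\not\equiv 0$ rests on these unestablished identities and ends with ``should overdetermine $\alpha,h,a_1$ and force $c=0$'' rather than an actual contradiction, it remains a genuine gap. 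The paper never needs $\phi e_1$ to be an $A$-eigendirection.
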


\begin{proof}
We suppose $a_1\neq 0$. From Lemma 5.2 and (38), we obtain $\phi e_1 f=0$. By Lemma 5.2 and (39), we have
\begin{equation}
a_j(\phi e_j f)=0.
\end{equation}
If $a_j\neq 0$ for all $j\geq 2$, then $\phi e_j f=0$. By Lemma 5.2, we have $Df=0$. In this case $M$ is pseudo-Einstein. This contradicts the assumption that $M$ is not Hopf. So we see that there exists some $j$ such that $a_j=0$.
By Lemma 5.3 and (24), we obtain
$$(c+a_1\alpha-h^2)(e_1 f)=0.$$
We suppose $a_j=0$. Then, from (32) and (35), we have
\begin{eqnarray}
& &(\lambda_1-\lambda_j)g(\nabla_{e_j} e_1,e_j) =c\{(e_1 f)-3(\phi e_j f)g(\phi e_1,e_j)\},\nonumber\\
& &(\lambda_i - \lambda_j) g(\nabla_{e_j} e_i, e_j) =c\{(e_i f)-3(\phi e_j f)g(\phi e_i, e_j)\}.
\end{eqnarray}
By (9), we see that $g(\nabla_{e_j} e_1,e_j)=0$. So if $a_j=0$, then we have
$$c(e_1 f)=3c (\phi e_j f)g(\phi e_1,e_j).$$
We denote $p$ the number of $a_j$ that satisfies $a_j=0$. We remark that if $a_j\neq 0$, then $\phi e_j f=0$ from (44). Then we obtain
\begin{eqnarray*}
pc(e_1 f)&=&\sum_{a_j=0}3c (\phi e_j f)g(\phi e_1,e_j)\\
&=& \sum_{j=1}^{2n-2} 3c (\phi e_j f)g(\phi e_1,e_j)\\
&=& -3c(e_1 f).
\end{eqnarray*}
From this equation, we obtain $(e_1 f)=0$.

Next, by (4), when $a_j=0$ and $i\neq j$, we have $a_ig(\nabla_{e_j} e_i, e_j)=0$. Since $\lambda_i-\lambda_j=(a_1+\alpha)a_i-a_i^2$, we see that
$$(\lambda_i- \lambda_j)g(\nabla_{e_j} e_i, e_j)=0.$$
From (45), we obtain
$$(e_i f)- 3(\phi e_j f)g(\phi e_i,e_j)=0.$$
Thus we have
\begin{eqnarray*}
pc(e_i f)&=&\sum_{a_j=0} 3(\phi e_j f)g(\phi e_i,e_j)\\
&=& \sum_{j=1}^{2n-2} 3(\phi e_j f)g(\phi e_i,e_j)\\
&=& -3c(e_i f).
\end{eqnarray*}
Here we used $\phi e_1 f=0$. From this equation, we obtain $e_i f=0$ for $i\neq j$. 

If $p\geq 2$, there exist $a_j=0$ and $a_k=0$. Then we have $e_i f=0$ when $i\neq j$ or $i\neq k$. Thus we see that $e_i f=0$ for any $i\geq 2$, and hence $Df=0$. In this case, $M$ is pseudo-Einstein and Hopf, this is a contradiction.

Next we consider the case that $p=1$. If $a_j=0$, then we have $Df=m e_j$ for some function $m$. Since $a_1\neq 0$ and $a_i\neq 0$ for any $i\neq j$, (43) and (44) imply that $\phi e_1 f=0$ and $\phi e_i f=0\ (i\neq j)$. Moreover, from Lemma 5.1, we have $\xi f=0$. Since $e_j$ is spanned by $\phi e_i$, $i\neq j$, so we see that $e_j f=0$. Hence we have $Df=0$ and this is a contradiction.

\end{proof}

From the proof of this lemma, we also have

\begin{lem} If there exist $j\geq 2$ such that $a_j=0$, then we have $e_1 f=0$.
\end{lem}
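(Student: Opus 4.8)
The plan is to re-use, almost verbatim, the chain of identities that appeared inside the proof of Lemma 5.5, after observing that the step which produced $e_1 f = 0$ from $a_j=0$ did not in fact require $a_1\neq 0$; since by Lemma 5.5 we now know $a_1=0$, the only real task is to check that every auxiliary relation I borrow was derived without that hypothesis. Throughout I work on the open set where $h\neq 0$, and ``$a_j=0$'' is understood to hold identically there, so that $\xi a_j = e_1 a_j = 0$.

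First I would fix an index $j\geq 2$ with $a_j=0$ and note that $\lambda_j=(2n+1)c$ is then constant, so $e_1\lambda_j=0$, and that (9) of Lemma 2.1 gives $h\,g(\nabla_{e_j}e_1,e_j)=\xi a_j=0$, hence $g(\nabla_{e_j}e_1,e_j)=0$. Plugging $i=1$ into the Codazzi relation (35) of Lemma 3.4 and using $a_1a_j=0$, the left-hand side collapses and I obtain the clean identity
$$c\,(e_1 f) = 3c\,(\phi e_j f)\,g(\phi e_1,e_j)$$
valid for every such $j$. For the indices $j\geq 2$ with $a_j\neq 0$ I would invoke $\xi f=0$ (Lemma 5.2) together with (39) of Lemma 3.5, which force $a_j(\phi e_j f)=0$ and hence $\phi e_j f=0$; thus $(\phi e_j f)\,g(\phi e_1,e_j)=0$ for those indices as well.

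Letting $p\geq 1$ be the number of indices $j\geq 2$ with $a_j=0$ and summing the displayed identity over them, the right-hand side may be completed to a sum over all $j$ from $2$ to $2n-2$ at no cost, so
$$p\,c\,(e_1 f) = 3c\sum_{j=2}^{2n-2}(\phi e_j f)\,g(\phi e_1,e_j).$$
Expanding $\phi e_1 = \sum_{j=2}^{2n-2} g(\phi e_1,e_j)\,e_j$ and using $g(\phi Df,\phi e_1)=g(Df,e_1)$ (because $\eta(e_1)=0$), the right-hand sum equals $-(e_1 f)$, whence $(p+3)c\,(e_1 f)=0$; since $c\neq 0$ and $p\geq 1$ this gives $e_1 f=0$. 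The argument is essentially bookkeeping, so I do not anticipate a genuine obstacle; the point deserving care is the reduction of (35) — verifying that each term other than $c(e_1 f)$ and $3c(\phi e_j f)g(\phi e_1,e_j)$ really vanishes once $a_j=0$ — and, underlying everything, confirming that (35), (39), and the identity $\xi f=0$ were established with no appeal to $a_1\neq 0$, so that they remain at our disposal after Lemma 5.5.
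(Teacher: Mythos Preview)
Your proof is correct and follows the paper's own approach essentially verbatim: the paper obtains this lemma simply by pointing back to the computation inside the proof of Lemma~5.4, namely using (9) to get $g(\nabla_{e_j}e_1,e_j)=0$, extracting $c(e_1 f)=3c(\phi e_j f)g(\phi e_1,e_j)$ when $a_j=0$, and summing over the zero-eigenvalue indices (completing the sum via $\phi e_j f=0$ for $a_j\neq 0$) to conclude $(p+3)c(e_1f)=0$. One minor citation point: the identity you need is (32) of Lemma~3.4 rather than (35) (the latter is stated for $i,j\geq 2$), but since $a_j=0$ kills the extra $ha_j(\xi f)$ term in (32) the two formulas coincide here.
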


Next we prove the following

\begin{lem} There exist $j\geq 2$ such that $a_j=0$.
\end{lem}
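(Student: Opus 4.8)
The plan is to derive a contradiction from the assumption that $a_j \neq 0$ for every $j \geq 2$, using the machinery already established. By Lemma 5.4 we know $a_1 = 0$, so the shape operator has the form $A\xi = \alpha\xi + he_1$, $Ae_1 = h\xi$, $Ae_j = a_j e_j$ with all $a_j \neq 0$ ($j \geq 2$), and by (16) we have $\mathrm{tr}A = \alpha$ and $\sum_{j=2}^{2n-2} a_j = 0$. The eigenvalues of the Ricci tensor are then $\lambda_1 = (2n+1)c - h^2$ (using $a_1 = 0$), $\beta = (2n-2)c - h^2$, and $\lambda_j = (2n+1)c + \alpha a_j - a_j^2$ for $j \geq 2$.

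First I would extract the vanishing of as many derivatives of $f$ as possible. Since $a_1 = 0$, equation (38) gives $e_1(\xi f) = a_1(\phi e_1 f) = 0$, which is not immediately useful, but equation (39) gives $e_j(\xi f) = a_j(\phi e_j f)$; combined with $\xi f = 0$ from Lemma 5.1 this needs care. The cleaner route is to use equation (30) from Lemma 3.4: since $a_1 = 0$, equation (30) becomes $h a_j(e_j f) = \{a_j(\lambda_1 - \beta) + \mu a_j\} g(\phi e_j, e_1)$, i.e. $h(e_j f) = (\lambda_1 - \beta + \mu)g(\phi e_j, e_1) = (3c + \mu)g(\phi e_j, e_1)$ after dividing by $a_j \neq 0$. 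Likewise equation (28) with $a_1 = 0$ gives $a_j h(e_j f) = (\lambda_j - \lambda_1)g(\nabla_\xi e_j, e_1) + a_j(\lambda_1 - \beta + \mu)g(\phi e_j, e_1)$, which combined with the previous relation forces $(\lambda_j - \lambda_1)g(\nabla_\xi e_j, e_1) = 0$. Then I would turn to (26): with $a_1 = 0$ it reads $(\lambda_1 - \lambda_j)g(\nabla_\xi e_1, e_j) = 0$, so either $\lambda_j = \lambda_1$ or the connection term vanishes. The strategy is to play these off against equations (40)--(42) from Lemma 3.6, which express $\lambda_1(e_1 f)$, $\lambda_j(e_j f)$ in terms of $\nabla Sc$ and $\mu h\, g(\phi e_1, e_j)$, together with equation (24)-type relations and the Codazzi identities (2), (3), (10), (11), (13) of Lemma 2.1.

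The concrete contradiction I expect to reach: the relation $h(e_j f) = (3c+\mu)g(\phi e_j, e_1)$ shows $Df$ is supported on the $\phi e_1$-direction plus possibly $\phi e_i$ directions, but then summing the analogue of the argument in the proof of Lemma 5.4 over all $j$ (using that $\sum_j g(\phi e_i, e_j)$ terms telescope through $\phi e_1$) together with $\phi e_1 f$ being constrained by (37)--(38) and Lemma 5.2 should force $Df$ to lie in a subspace incompatible with the requirement that $M$ be non-Hopf. Alternatively — and this is probably the cleanest — I would argue by a counting/trace argument: the conditions $\sum a_j = 0$ with all $a_j \neq 0$ force the $\lambda_j$ to split into at least two distinct values, and then equation (33), $\{(a_i + a_j)(\mu - \beta) + a_j\lambda_i + a_i\lambda_j\}g(\phi e_i, e_j) = 0$, applied to pairs $e_i, \phi e_i$ (for which $g(\phi e_i, \phi(\phi e_i)) = g(\phi e_i, -e_i + \eta(e_i)\xi) \neq 0$ when $e_i \perp \xi$), yields a polynomial relation among $a_i$, $a_{i'}$, $\alpha$, $\mu$, $\beta$, $c$ that together with $\sum a_j = 0$ and the constancy facts from Lemma 5.3's analogue pins everything down and collides with $h \neq 0$.

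The main obstacle will be organizing the case analysis cleanly: equation (33) only gives information on pairs $e_i, e_j$ with $g(\phi e_i, e_j) \neq 0$, so I must track how $\phi$ permutes the eigenspaces $T_{a_j}$, and the eigenvalues $a_j$ need not be distinct, so $\phi$ need not respect a fine eigenspace decomposition. The cleanest path is likely to show first that $\phi e_1 f = 0$ (paralleling Lemma 5.3, but now $a_1 = 0$ so a different argument is needed — presumably via (37) and the constancy of $Sc$ combined with $\xi f = 0$), then show each $e_j f = 0$ using the displayed relation $h(e_j f) = (3c+\mu)g(\phi e_j, e_1)$ and a summation identity, concluding $Df = 0$, whence $M$ is pseudo-Einstein, hence Hopf by the Remark in Section 2 — contradicting $h \neq 0$. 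I would therefore structure the proof as: (i) reduce to $h(e_jf) = (3c+\mu)g(\phi e_j, e_1)$ for all $j \geq 2$; (ii) derive a companion relation for $\phi e_1 f$; (iii) sum to kill $e_1 f$ (but Lemma 5.5 already needs $a_j = 0$ for that, so here I instead directly manipulate); (iv) conclude $Df = 0$ and invoke the Hopf characterization for a contradiction.
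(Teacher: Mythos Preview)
Your proposal has a genuine gap. You correctly set up the situation ($a_1=0$, eigenvalue formulas, etc.) and you even write down equation (39), $e_j(\xi f)=a_j(\phi e_j f)$, together with $\xi f=0$ from Lemma~5.2. But then you say ``this needs care'' and walk away from it. There is no subtlety: since $\xi f\equiv 0$, its derivative in any direction vanishes, so $a_j(\phi e_j f)=0$, and under your standing assumption $a_j\neq 0$ for all $j\ge 2$ this gives $\phi e_j f=0$ for every $j\ge 2$. Combined with $\xi f=0$, this forces $Df=m\,\phi e_1$ for some function $m$. You never reach this reduction, and without it the rest of your outline floats.

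More seriously, your endgame is to prove $\phi e_1 f=0$ and hence $Df=0$. But (37) already gives $-h(\phi e_1 f)=\beta-\lambda-\mu=(2n-2)c-h^2-\lambda-\mu$, so $\phi e_1 f=0$ would require $h^2=(2n-2)c-\lambda-\mu$, which is not available a priori. The paper does \emph{not} show $Df=0$; instead it keeps $m=\phi e_1 f$ as a nonzero parameter and extracts algebraic constraints. Concretely: using (8) with $a_1=0$ and $a_j\neq 0$ one computes $g(\nabla_{e_1}Df,e_1)=-2mh$, and (21) equates this to $\lambda_1-\lambda=(2n+1)c-h^2-\lambda$. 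Together with (37) this yields $mh=-3c-\mu$ and $h^2=(2n-5)c-\lambda-2\mu$, so $h$ and $m$ are constant. Then (7), (8) force $A\phi e_1=a_2\phi e_1$ with $2c+a_2\alpha+2h^2=0$; $g(\nabla_{e_2}Df,e_2)=0$ forces $a_2^2=(2n+1)c+a_2\alpha-\lambda$; and (11), (12) give $\alpha=2a_2$ and $c+h^2+a_2^2=0$. Finally (31) yields $h=2m$, hence $\lambda=(2n+1)c$, which collapses the previous relation to $a_2=\alpha=2a_2$, i.e.\ $a_2=0$, contradicting $a_2\neq 0$. Your alternative route through (33) applied to pairs $(e_i,\phi e_i)$ is also problematic as stated, since $\phi e_i$ need not be a frame vector $e_j$, so (33) does not apply directly to that pair.
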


\begin{proof}
We suppose $a_j\neq 0$ for any $j\geq 2$. By Lemma 5.2 and (39), we have $\phi e_j f=0$ for any $j\geq 2$ and $\xi f=0$. So we can represent $Df=m\phi e_1$ for some function $m$. 

By Lemma 5.4, we have $a_1=0$. So (37) implies that
\begin{equation}
-h(\phi e_1 f)=\{(2n-2)c-h^2\}-\lambda -\mu.
\end{equation}
Since $Df=m\phi e_1$, we obtain
\begin{eqnarray}
g(\nabla_{e_1} Df, e_1)&=& g(\nabla_{e_1} m\phi e_1,e_1)\nonumber\\
&=& -mg(\phi e_1,\nabla_{e_1} e_1)\nonumber\\
&=& -\sum_{k\geq 2} m g(\phi e_1,e_k)g(e_k,\nabla_{e_1} e_1).\nonumber
\end{eqnarray}
Since $a_1=0$ and $a_j\neq 0$ for $j\geq 2$, (8) implies
$$g(\nabla_{e_1} e_1, e_j)=-2h g(\phi e_j, e_1).$$
From these equations, we obtain
\begin{eqnarray*}
g(\nabla_{e_1} Df,e_1)&=&\sum_{k\geq 2} 2mh g(\phi e_1,e_k)g(\phi e_k,e_1)\\
&=&-2mh.
\end{eqnarray*}
On the other hand, by (21), we have
$$g(\nabla_{e_1} Df, e_1) = \lambda_1-\lambda.$$
From these equations and (18), we obtain
\begin{equation}
-2mh=(2n+1)c - h^2 - \lambda.
\end{equation}
By (46) and (47), we have $mh=-3c-\mu$, and hence $mh$ is constant. Moreover, again using (46) and (47), we have
$$h^2=(2n-5)c-\lambda-2\mu.$$
So $h$ is constant, from which we see that $m$ is also constant.

From (7) and $a_1=0$, we obtain
\begin{equation}
(2c+a_j\alpha +2h^2)g(\phi e_j, e_1)=0.
\end{equation}
If there exist $e_i$ and $e_j$ such that $g(\phi e_j,e_1)\neq 0$ and $g(\phi e_i,e_1)\neq 0$, then we have 
$$2c+a_j\alpha+2h^2 = 2c+a_i\alpha +2h^2=0,$$
and hence $(a_j-a_i)\alpha=0$. When $\alpha=0$, using (11), (12) and $c+h^2=0$, we have $g(\nabla_\xi e_i,e_1)=0$ and $g(e_1,\phi e_i)=0$ for any $i\geq 2$. This is a contradiction. So we see that $\alpha\neq 0$. Hence if $g(\phi e_j,e_1)\neq 0$ and $g(\phi e_i,e_1)\neq 0$, then $a_i=a_j$. Thus we can represent $A\phi e_1=a\phi e_1$ for some function $a$. Taking a suitable permutation, we can put $\phi e_1=e_2$ and $a_2=a$. By (48), we remark that
\begin{equation}
2c+a_2\alpha+2h^2=0.
\end{equation}
Thus we see that $a_2\alpha$ is constant. 

By (21), we have
$$g(\nabla_{e_2} Df,e_2)=(2n+1)c+a_2\alpha -a_2^2 - \lambda.$$
Since $m$ is constant, we obtain
$$g(\nabla_{e_2} Df,e_2)=g(\nabla_{e_2} me_2,e_2)=0.$$
From these equations, we have
\begin{equation}
a_2^2=(2n+1)c+a_2\alpha - \lambda.
\end{equation}
Since $a_2\alpha$ is constant, we see that $a_2$ is constant and $\alpha$ is also constant since $a_2\neq 0$.

By (11) and (12), we have
\begin{eqnarray*}
& &(c+a_2\alpha + h^2) g(\phi e_2,e_1) - a_2g(\nabla_\xi e_2,e_1)=0,\\
& &h(\alpha-3a_2)g(\phi e_2,e_1) + hg(\nabla_\xi e_2,e_1)=0.
\end{eqnarray*}
From these equations, we obtain
$$c+2a_2\alpha-3a_2^2 + h^2=0.$$
Combining this with (49), we have
\begin{equation}
c+h^2+a_2^2=0.
\end{equation}
From these equations, we see that $\alpha=2a_2$.

By (8), we have $g(\nabla_{e_1} e_2,e_1)=-2h$. Using (31), we obtain
$$(\lambda_2-\lambda_1)g(\nabla_{e_1}e_2,e_1)=4cm.$$
Hence we see that $2cm=-h(a_2\alpha - a_2 + h^2)$. Since $\alpha= 2a_2$, by (51), we have $h=2m$. By (47), we have $\lambda=(2n+1)c$. So (50) implies that $a_2=\alpha$, this is a contradiction. 

\end{proof}

From this lemma, there exists $a_j= 0$, $j\geq 2$. For this integer $j$, using (9), we have $g(\nabla_{e_j} e_1, e_j)=0$. Besides, by Lemma 5.5 and (32), we have
$$3c(\phi e_j f)g(\phi e_1,e_j)=0,$$
Hence we see that if $a_j=0$, then we have $\phi e_j f=0$ or $g(\phi e_1,e_j)=0$. When $g(\phi e_1,e_j)=0$, by (11), we have $e_j h=0$. In this case, (7) implies that $g(\nabla_{e_1} e_j, e_1)=0$. Thus from (31), we have $e_j f=0$. 

Taking a suitable permutation, we can take an orthonormal basis 
$\{\xi,e_1,e_2,\cdots, e_{2n-2}\}$ such that $a_j=0$ and $\phi e_j f=0$ for $j=2,\cdots,q$, $a_j=0$ and $\phi e_j f\neq 0$ for $j=q+1,\cdots, r$, $a_j\neq 0$ for $j=r+1,\cdots, 2n-2$. We put $H_{01}=\langle e_2,\cdots, e_q\rangle$, $H_{02}=\langle e_{q+1},\cdots, e_r\rangle$, $H_0=\langle e_2,\cdots, e_r\rangle$, $H_1=\langle e_{r+1},\cdots, e_{2n-2}\rangle$, respectively. That is, for $j, t\geq 2$, 
\begin{eqnarray*}
& &H_{01}={\rm Span}\{e_j : a_j=0, \phi e_j f=0\},\\
& &H_{02}={\rm Span}\{e_j : a_j=0, \phi e_j f\ne 0\},\\
& &H_1={\rm Span}\{e_t : a_t\ne 0, \phi e_t f =0\},\\
& &H_0=H_{01}\oplus H_{02}.
\end{eqnarray*}
If $e_j\in H_{02}$, then $g(\phi e_1,e_j)=0$. This implies that $\phi e_1 \in H_{01}\oplus H_1$.

We note that if $e_t\in H_1$, then (30) implies
\begin{equation}
h(e_t f)=(3c+\mu)g(\phi e_t,e_1).
\end{equation}
On the other hand, when $e_i\in H_0$ and $e_t\in H_1$, by (33),
\begin{equation}
(\mu+ 3c+ h^2)g(\phi e_i, e_t)=0.
\end{equation}

\begin{lem}
We have $\mu + 3c+h^2\neq 0$.
\end{lem}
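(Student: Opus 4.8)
The plan is to argue by contradiction: assume $\mu + 3c + h^2 = 0$ and derive an inconsistency from the structure equations together with the gradient pseudo-Ricci soliton relations of Lemmas 3.3--3.6 and Lemma 2.1. Under this assumption, equation (54) becomes vacuous, so it gives no information relating $H_0$ and $H_1$; instead the constraint $h^2 = -3c - \mu$ tells us that $h$ is a constant on the open set where $h \neq 0$. This constancy of $h$ is the first lever: feeding $\xi h = 0$, $e_i h = 0$, $e_1 h = 0$ into the relevant Codazzi equations from Lemma 2.1 (especially (11), (13), (14), (15)) should force strong rigidity on $\alpha$, the $a_j$, and the connection coefficients $g(\nabla_\xi e_i, e_1)$, $g(\phi e_i, e_1)$.

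The key steps, in order, would be: (i) From $h$ constant and $a_1 = 0$ (Lemma 5.4), use Lemma 5.2 together with (37) to get $-h(\phi e_1 f) = \{(2n-2)c - h^2\} - \lambda - \mu$, so $\phi e_1 f$ is constant, and combine with the soliton relations (41), (42) in Lemma 3.6 to pin down $e_1 Sc$ and the $e_t f$ for $e_t \in H_1$ via (53). (ii) Using (47)-type computations of $g(\nabla_{e_1} Df, e_1) = \lambda_1 - \lambda$ as in the proof of Lemma 5.7, together with the now-known value $h^2 = -3c-\mu$, extract a relation forcing $Df$ to have a controlled form; in particular I expect to show that $Df$ must lie essentially in $H_1 \oplus \langle \phi e_1\rangle$ and that the component along $H_{02}$ is also forced, because $\phi e_1 \in H_{01} \oplus H_1$ already. (iii) Evaluate $g(\nabla_{e_t} Df, e_t)$ and $g(R(\cdot,\cdot)Df, \cdot)$ for $e_t \in H_1$ using (21), (22) and the explicit $\lambda_t = (2n+1)c + \tr A \cdot a_t - a_t^2$ to get a polynomial identity in $a_t$, $\alpha$, $h$, $\lambda$, $\mu$; since $\tr A = \alpha$ here (as $a_1 = 0$), this should force each such $a_t$ to be constant and then, via (50)-(51)-type manipulations, produce $\alpha = 2a_t$ for each, so $H_1$ carries a single principal curvature $a_t = a$ with $\alpha = 2a$ and $c + h^2 + a^2 = 0$. (iv) Then $h^2 = -c - a^2$ contradicts $h^2 = -3c - \mu$ unless $\mu = 2c - a^2$, and substituting back into the soliton constant $\lambda$ obtained from (47) (i.e. $-2mh = (2n+1)c - h^2 - \lambda$ with $mh$ the relevant component) will overdetermine the system and yield $c = 0$ or $a = 0$, either of which contradicts $c \neq 0$ and non-Hopfness.

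The main obstacle I anticipate is step (iii): controlling $Df$ and the $a_t$ on $H_1$ simultaneously. The difficulty is that $Df$ a priori has components in $H_{02}$ and in $H_1$ (the directions where $\phi e_j f \neq 0$ or $e_t f \neq 0$), and the equations (52)--(53) couple these components to the connection coefficients $g(\phi e_t, e_1)$ and to $g(\nabla_\xi e_i, e_t)$. One must carefully track which of $H_{01}, H_{02}, H_1$ the vector $\phi e_1$ and $\phi e_t$ land in, and use the integrability-type relations (2)--(4) and (33)--(36) to close the system. I expect the resolution to parallel the endgame of Lemma 5.7: derive that $h = 2m$ for the relevant component $m$ of $Df$, combine with $\lambda = (2n+1)c$ and the quadratic $a^2 = (2n+1)c + a\alpha - \lambda$, and read off $a = \alpha$ against $\alpha = 2a$ to conclude $a = 0$, contradicting $e_t \in H_1$. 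Once that contradiction is in hand the lemma follows, so the assumption $\mu + 3c + h^2 = 0$ is untenable and $\mu + 3c + h^2 \neq 0$.
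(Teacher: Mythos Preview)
Your outline is not yet a proof, and the route you sketch has a structural gap that would prevent it from closing. The core of your plan is to recycle the computations from the proof of Lemma~5.6 (the relations $\alpha=2a$, $c+h^{2}+a^{2}=0$, $h=2m$, $\lambda=(2n+1)c$) in this new setting. But those relations were obtained under the hypothesis that \emph{every} $a_j\neq 0$, which forced $\phi e_1\in H_1$ automatically and made equations (11), (12), (26) usable with a nonzero $a_t$ on $\phi e_1$. Here Lemma~5.6 has already guaranteed that some $a_j=0$, so $H_0\neq\{0\}$, and you do not know a priori where $\phi e_1$ sits. Without first pinning down $\phi e_1$, the ``(50)--(51)-type manipulations'' you invoke have no traction: those equations were evaluated at $e_2=\phi e_1$ with $a_2\neq 0$, a configuration you have not established.

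The paper's argument supplies exactly this missing first move, and it forces a case split you do not anticipate. From (11) with $e_j\in H_0$ (so $a_j=0$ and $e_jh=0$ since $h$ is constant) one gets $(c+h^{2})g(\phi e_j,e_1)=0$. Thus either $c+h^{2}\neq 0$, in which case $\phi e_1\in H_1$ and one can proceed with Codazzi equations (7), (8), (11), (12), (26), (28), (31) to force $m^{2}=h^{2}+2c$ (not $c+h^{2}+m^{2}=0$ as you expect) and eventually the contradiction $c+h^{2}=0$; or $c+h^{2}=0$, in which case a separate short argument via (7), (8), (11), (12), (27), (31) shows $\phi e_1\in H_0$ and then reaches $cg(\phi e_2,e_1)=0$ for $e_2=\phi e_1$, a contradiction. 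Your step~(iv) algebra (matching $h^{2}=-c-a^{2}$ against $h^{2}=-3c-\mu$) is therefore based on the wrong quadratic, and your plan never addresses the $c+h^{2}=0$ branch at all.
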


\begin{proof}

We suppose $\mu + 3c+h^2=0$. We remark that $h$ is constant in this case. When $e_j\in H_0$, by (11),
$$(c+h^2)g(\phi e_j,e_1)=0.$$
First we consider the case $c+h^2\neq 0$. In this case we see that $\phi e_1\in H_1$. Using (7) and (8), we have
\begin{eqnarray*}
& &(2c+a_t\alpha)g(\phi e_t,e_1)+hg(\nabla_{e_1} e_t, e_1)=0,\\
& &2a_t hg(\phi e_t, e_1)-a_j g(\nabla_{e_1} e_t,e_1)=0.
\end{eqnarray*}
From these equations, using $a_t\neq 0$, we have 
$$(2h^2+2c+a_t\alpha)g(\phi e_t,e_1)=0.$$ 
Thus we see that if $g(\phi e_1,e_t)\neq 0$, then $a_t\alpha=-2h^2-2c\neq 0$. So we can put $a_t=m$ where $m\alpha=-2c-2h^2$. Since $\phi e_1\in H_1$, we can represent $\phi e_1=\sum_{e_k\in H_1} \mu_k e_k$. We remark that if $\mu_k\neq 0$, then $A e_k=me_k$. Hence we obtain
$$A\phi e_1=\sum_{e_k\in H_1}\mu_k Ae_k=m\left(\sum_{e_k\in H_1} \mu_ke_k\right)=m\phi e_1.$$
Using (11), (26) and $A\phi e_1=m\phi e_1$, we obtain
\begin{eqnarray*}
& &(c+h^2)g(\phi e_t, e_1)-a_t g(\nabla_\xi e_t, e_1)=0,\\
& &(h^2+2c-m^2)g(\nabla_\xi e_1,e_t)=0.
\end{eqnarray*}
From these equations, we have $(c+h^2)(h^2+2c-m^2)=0$. Since $h^2+c\neq 0$, we have $m^2=h^2+2c$, from which we see that $m$ is constant. By $m\alpha=-2c-2h^2$, $\alpha$ is also constant. Thus, putting $e_t=\phi e_1$, (12) implies that
$$g(\nabla_{\xi}e_t,e_1)=\alpha-3m.$$
Using (8), we have $g(\nabla_{e_1} e_t ,e_1)=-2h$. From (28) we obtain $e_t f=h$. By (31), we have
$$(m\alpha - m^2 + h^2)g(\nabla_{e_1} e_t,e_1)=4ch.$$
From these equations, we have $h^2+c=0$, this is a contradiction.

Next we consider the case $h^2+c=0$. Again using (7) and (8), we have
$$a_t\alpha g(\phi e_t,e_1)=0.$$
If there exists $e_t\in H_1$ such that $g(\phi e_t,e_1)\neq 0$, then $a_t\alpha=0$. Since $a_t\neq 0$, we have $\alpha=0$. From (11) and $c+h^2=0$, we obtain $g(\nabla_\xi e_j,e_1)=0$ for any $j\geq 2$. From this and (12), we have
$$-3a_j hg(e_1,\phi e_j)=0$$
for any $j\geq 2$. This contradicts to the assumption that there exists $e_t\in H_1$ such that $g(\phi e_t,e_1)\neq 0$. Hence this case does not occur. So we see that for any $e_t\in H_1$, we have $g(\phi e_t,e_1)=0$, that is, $\phi e_1\in H_0$. Taking a suitable permutation, we can put $e_2=\phi e_1$. By (27) and $c+h^2=0$, we have $e_2 f=0$. Thus (31) implies that $g(\nabla_{e_1} e_2, e_1)=0$. By (7), we have $cg(\phi e_2, e_1)=0$. This is a contradiction.
\end{proof}

From this lemma and (53), we see that $g(\phi e_i, e_t)=0$ for $e_i\in H_0$ and $e_t\in H_1$, that is, $\phi H_0\perp H_1$. 

When $e_i\in H_{02}$, then $\phi e_i f\ne 0$ and hence $g(\phi e_i,e_1)=0$. So, we have $\phi H_{02}\subset H_0$. Moreover, when $e_i, e_j \in H_{02}$, since $a_i=a_j=0$, $e_if=0$ and $\phi e_j f\neq 0$, (35) implies that $g(\phi e_i, e_j)=0$. Hence we see that $\phi H_{02}\subset H_{01}$.

When $e_i,e_j\in H_{01}$ and $i\neq j$, since $a_i=a_j=0$ and $\phi e_i f=\phi e_j f=0$, (35) implies that $e_i f=0$ and $e_j f=0$. Thus we see that if ${\rm{dim}}H_{01}\geq 2$, then we have $e_if=0$ for any $e_i\in H_{01}$. On the other hand, since $\phi H_{02}\subset H_{01}$ and $\phi e_j f\neq 0$ for $e_j\in H_{02}$, from which we see that ${\rm{dim}}H_{02}=0$.

Next we consider the case ${\rm{dim}}H_{01}=1$, that is, $H_{01}=\langle e_2 \rangle$. Since $\phi H_{02}\subset H_{01}$, we see that ${\rm{dim}}H_{02}\leq 1$. If ${\rm{dim}}H_{02}=1$, then we have $\phi H_{02}=H_{01}$. On the other hand, when ${\rm{dim}}H_{02}=0$, since $\phi H_1\perp H_0$, we have $\phi e_2=\pm e_1$. We can suppose $\phi e_1=e_2$.

From these consideration, we have the following lemma.

\begin{lem} We have the following three cases:\\
{\rm Case 1}: ${\rm dim}H_{01}\geq 2$, $H_0=H_{01}$, $e_j f=0, \phi e_j f=0\ \ (e_j\in H_{01})$,\\
{\rm Case 2}: ${\rm dim}H_{01}=1$, $\phi H_{02}=H_{01}, e_j f\ne 0\ \ (e_j\in H_{01}), \phi e_1 \in H_1$,\\
{\rm Case 3}: ${\rm dim}H_{01}=1$, ${\rm{dim}}H_{02}=0$, $\phi H_1=H_1$, $H_0$ is spanned by $\phi e_1$.\\

\end{lem}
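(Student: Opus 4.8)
The plan is to organize the non-Hopf adapted frame by the two binary invariants ``$a_j=0$ or not'' and ``$\phi e_j f=0$ or not'', to determine how $\phi$ moves the resulting blocks around, and then to obtain the trichotomy by a case split on $\dim H_{01}$. Throughout I would work on the open set where $h\ne0$, so that $a_1=0$ by Lemma 5.4, $\xi f=0$ by Lemma 5.2, and $e_1f=0$ by Lemma 5.5 (once some $a_j$, $j\ge2$, vanishes, which Lemma 5.6 guarantees). First I fix an index $j\ge2$ with $a_j=0$. For such an index, (9) gives $g(\nabla_{e_j}e_1,e_j)=0$, and then (32) collapses --- using $a_1=0$ and $\xi f=e_1f=0$ --- to $3c\,(\phi e_jf)\,g(\phi e_1,e_j)=0$, so either $\phi e_jf=0$ or $g(\phi e_1,e_j)=0$; in the second alternative, (11) yields $e_jh=0$, then (7) yields $g(\nabla_{e_1}e_j,e_1)=0$, and then (31) yields $e_jf=0$ as well. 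This dichotomy is what legitimizes splitting the frame as $H_{01}\oplus H_{02}\oplus H_1$ with $H_{01}=\{a_j=0,\ \phi e_jf=0\}$, $H_{02}=\{a_j=0,\ \phi e_jf\ne0\}$, $H_1=\{a_t\ne0\}$ (on $H_1$ one has $\phi e_tf=0$ automatically, by Lemma 5.2 and (39)), and it also yields $g(\phi e_1,e_j)=0$ for $e_j\in H_{02}$, i.e.\ $\phi e_1\in H_{01}\oplus H_1$.

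Next I would read off the incidence relations among the blocks. From (53) together with the inequality $\mu+3c+h^2\ne0$ of Lemma 5.7 --- the only genuinely non-formal input --- one obtains $g(\phi e_i,e_t)=0$ for $e_i\in H_0$ and $e_t\in H_1$, i.e.\ $\phi H_0\perp H_1$. For $e_i\in H_{02}$, the condition $\phi e_if\ne0$ places $e_i$ in the second branch of the dichotomy, so $g(\phi e_1,e_i)=0$, hence $\phi e_i\in H_0$; applying (35) to a pair $e_i,e_j\in H_{02}$ (with $a_i=a_j=0$, $e_if=0$, $\phi e_jf\ne0$) then forces $g(\phi e_i,e_j)=0$, so $\phi H_{02}\subset H_{01}$. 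Likewise, (35) applied to two distinct vectors of $H_{01}$, both with $e_if=0$ and $\phi e_if=0$, shows that $\dim H_{01}\ge2$ implies $e_if=0$ for every $e_i\in H_{01}$.

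The trichotomy then follows by a case split on $\dim H_{01}$. If $\dim H_{01}\ge2$, then $e_if=0$ throughout $H_{01}$, while $\phi H_{02}\subset H_{01}$ with $\phi e_jf\ne0$ on $H_{02}$ would produce a vector $e_i\in H_{01}$ with $e_if\ne0$; this is impossible unless $H_{02}=0$, whence $H_0=H_{01}$ --- Case 1. If $\dim H_{01}=1$, say $H_{01}=\langle e_2\rangle$, then $\phi H_{02}\subset H_{01}$ forces $\dim H_{02}\le1$. When $\dim H_{02}=1$ we get $\phi H_{02}=H_{01}$, so writing $e_2=\pm\phi e_3$ with $e_3\in H_{02}$ gives $e_2f\ne0$; moreover $\phi H_{01}=H_{02}$ is orthogonal to $e_1$, so $g(\phi e_1,e_2)=-g(e_1,\phi e_2)=0$, forcing $\phi e_1\in H_1$ --- Case 2. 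When $\dim H_{02}=0$, $H_0=\langle e_2\rangle$, and $\phi H_0\perp H_1$ together with $g(\phi e_2,e_2)=0$ pins $\phi e_2$ into $\langle e_1\rangle$, so $H_0=\langle\phi e_1\rangle$, the plane $\langle e_1,e_2\rangle$ is $\phi$-invariant, and hence so is its orthogonal complement $H_1$ inside the holomorphic distribution --- Case 3.

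I expect the routine part to be the accounting with the Codazzi identities of Lemma 2.1 and the soliton identities of Lemmas 3.4 and 3.5, tracking index by index which of $e_if$ and $\phi e_if$ vanish. The two places needing care are verifying that the three cases are genuinely exhaustive --- in particular that $H_{02}$ must vanish as soon as $\dim H_{01}\ge2$ --- and pinning down where $\phi e_1$ lands in the two subcases of $\dim H_{01}=1$. The single essential ingredient beyond bookkeeping is Lemma 5.7, which is used exactly once, to convert (53) into $\phi H_0\perp H_1$.
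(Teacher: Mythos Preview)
Your proposal is correct and follows essentially the same route as the paper's own argument (which is the discussion immediately preceding the lemma rather than a labeled proof): the same dichotomy from (32), the same use of Lemma~5.7 to convert (53) into $\phi H_0\perp H_1$, the same applications of (35) to obtain $\phi H_{02}\subset H_{01}$ and to force $e_if=0$ on $H_{01}$ once $\dim H_{01}\ge2$, and the same case split on $\dim H_{01}$. One small slip of wording: in the $H_{01}$ step your parenthetical should list only $\phi e_if=\phi e_jf=0$ as input (the condition $e_if=0$ is the conclusion, not a hypothesis), but the logic is unaffected.
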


Next we show the following

\begin{lem} {\rm Case 1} and {\rm Case 2} do not occur.
\end{lem}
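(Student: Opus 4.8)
The plan is to rule out Case 1 and Case 2 in turn by extracting enough consequences from the Codazzi equations of Lemma 2.1 and the gradient pseudo-Ricci soliton equations of Lemmas 5.2--5.7 to force a contradiction, typically by showing that the putative potential function $f$ must be constant (so $Df=0$, making $M$ pseudo-Einstein and Hopf, against our standing assumption $h\neq 0$).

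For Case 1 we have $\mathrm{dim}\,H_{01}\geq 2$, $H_0=H_{01}$, and $e_jf=0$, $\phi e_jf=0$ for all $e_j\in H_{01}$. Combined with Lemma 5.2 ($\xi f=0$) and Lemma 5.4 ($a_1=0$), the only directions in which $Df$ can have a component are $e_1$ and the directions of $H_1$; moreover by Lemma 5.5 and the preceding discussion $e_1f=0$ as soon as some $a_j=0$, which holds here by Lemma 5.7. So $Df$ lives entirely in $H_1$, i.e.\ $Df=\sum_{e_t\in H_1} (e_tf)e_t$. But on $H_1$ we have $\phi e_tf=0$ (by definition of $H_1$), and since $\phi H_0\perp H_1$ together with $\phi H_{02}\subset H_{01}$ (and $\mathrm{dim}\,H_{02}=0$ in this case) one shows $\phi H_1=H_1$; hence $\phi e_tf=0$ for all $t$ forces $e_tf=0$ as well, giving $Df=0$. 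The bookkeeping here --- tracking which $\phi$-images land where --- is routine given the invariance relations already established before Lemma 5.8.

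For Case 2 we have $\mathrm{dim}\,H_{01}=1$, say $H_{01}=\langle e_2\rangle$ with $e_2f\neq 0$, $\phi H_{02}=H_{01}$, and $\phi e_1\in H_1$. The strategy is to use the soliton Hessian identity (21) along the directions $e_1,e_2,\xi,\phi e_1$ and the $H_1$-directions. First, from $\phi H_{02}=H_{01}$ pick $e_3\in H_{02}$ with $\phi e_3=\pm e_2$; then Lemma 5.2 applied to $e_3$ gives $e_3(\xi f)-a_3(\phi e_3 f)=0$ with $a_3=0$, but $\phi e_3f=\pm e_2f\neq 0$, contradicting $\xi f=0$ (one must check $e_3(\xi f)$ vanishes, which follows since $\xi f\equiv 0$ as a function). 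If that quick route needs patching, the fallback is to compute $g(\nabla_{e_1}Df,e_1)$ two ways as in the proofs of Lemmas 5.4 and 5.7: once from (21) (yielding $\lambda_1-\lambda$, a constant plus $a_1\alpha-h^2$, with $a_1=0$) and once directly using (8) to express $g(\nabla_{e_1}e_1,e_j)$ in terms of $h$ and $g(\phi e_j,e_1)$; since $\phi e_1\in H_1$ and $Df$ has a component along $\phi e_1$, this pins down $h$ and $m$ to be constant, after which (11), (12), (27), (31) and $c+h^2=0$-type relations overdetermine $\alpha$ and lead to a contradiction exactly as in Lemma 5.7.

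The main obstacle is Case 2: unlike Case 1, the potential function genuinely has a nonzero component ($e_2f\neq 0$), so one cannot simply collapse $Df$ to zero by invariance arguments. The real work is to exploit the $\phi$-pairing $\phi H_{02}=H_{01}$ against the first-order system of Lemma 5.2, and then, if needed, to run the same second-derivative comparison that drove the contradiction in Lemma 5.7 --- carefully checking that the constancy of $h$, $m$ and $\alpha$ propagates and that the resulting algebraic relation among $c,h,\alpha$ is inconsistent. I expect Case 3 (the surviving case) to be handled in a subsequent lemma and to lead to the $3$-dimensional ruled example.
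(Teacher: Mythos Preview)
Your proposal has genuine gaps in both cases.

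\textbf{Case 2, the ``quick route,'' is a non-argument.} Equation (39) reads $e_j(\xi f)-a_j(\phi e_j f)=0$. With $e_3\in H_{02}$ you have $a_3=0$, so the equation becomes $e_3(\xi f)=0$, which is automatic since $\xi f\equiv 0$. The term $a_3(\phi e_3 f)$ vanishes regardless of whether $\phi e_3 f=\pm e_2f$ is nonzero; you obtain $0=0$, not a contradiction. Your fallback sketch (``compute $g(\nabla_{e_1}Df,e_1)$ two ways \dots\ exactly as in Lemma 5.7'') is not a proof and does not match how the case is actually closed.

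\textbf{Case 1: the claim $\phi H_1=H_1$ is not established.} From the discussion preceding Lemma 5.8 you only know $\phi H_1\perp H_0$, hence $\phi H_1\subset\langle e_1\rangle\oplus H_1$, together with $\phi e_1\in H_{01}\oplus H_1$. Nothing so far rules out $\phi e_1$ having a nonzero $H_1$-component; in that situation $-e_1=\phi(\phi e_1)\in\phi H_1$, so $\phi H_1\neq H_1$, and one checks that $Df$ can then be a nonzero multiple of $\phi e_1$ while still satisfying $\phi e_tf=0$ for all $e_t\in H_1$. The step you call ``routine bookkeeping'' is exactly where the argument breaks.

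The paper avoids both issues with a single direct computation, valid in Cases 1 and 2 alike: apply (35) with $e_i\in H_0$ and $e_t\in H_1$. Since $a_i=0$, equation (4) gives $a_t\,g(\nabla_{e_i}e_t,e_i)=0$, whence $(\lambda_t-\lambda_i)\,g(\nabla_{e_i}e_t,e_i)=0$; moreover $\lambda_i=(2n+1)c$ is constant so $e_t\lambda_i=0$, $a_ia_t=0$, and $g(\phi e_t,e_i)=0$ because $\phi H_1\perp H_0$. Thus (35) collapses to $c(e_tf)=0$, i.e.\ $e_tf=0$ for every $e_t\in H_1$. Case 1 is then immediate: all components of $Df$ vanish. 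For Case 2, pick $e_i\in H_{01}$, $e_j\in H_{02}$ with $\phi e_j=e_i$; combining the Codazzi identity (3) with the soliton identity (34), using $a_i=a_j=0$, $\lambda_i-\lambda_1=h^2$, $\phi e_if=0$, and $g(\phi e_i,e_1)=g(\phi e_j,e_1)=0$ (the latter since $\phi e_1\in H_1$), yields $\phi e_1 f=h$. But $\phi e_1\in H_1$ and $e_tf=0$ on $H_1$ force $\phi e_1 f=0$, contradicting $h\neq 0$.
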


\begin{proof}
From (35), we have
\begin{eqnarray*}
& &(\lambda_t-\lambda_i)g(\nabla_{e_i} e_t,e_i) - (e_t \lambda_i)\\
& &=c\{(e_t f)-e(\phi e_i f)g(\phi e_t,e_i)\}+a_ia_t(e_j f).
\end{eqnarray*}
If $a_i=0$ and $a_t\neq 0$, by (4), we have $a_jg(\nabla_{e_i} e_t, e_i)=0$. Since $\lambda_i=(2n+1)c$ and $\lambda_t=(2n+1)c + a_t {\rm{tr}}A -a_t^2$, we obtain
\begin{eqnarray*}
& &(\lambda_t-\lambda_i)g(\nabla_{e_i} e_t,e_i) - (e_j \lambda_i)\\
& &\quad =(a_t{\rm{tr}}A-a_t^2)g(\nabla_{e_i} e_t, e_i)=0.
\end{eqnarray*}
On the other hand, since $\phi H_1\perp H_0$, we have $g(\phi e_t,e_i)=0$. From these equations, we see that $e_t f=0$ for any $e_t\in H_1$.

When the shape operator satisfies the Case 1, by Lemma 5.4 and Lemma 5.8, we have $Df=0$ and $M$ is pseudo-Einstein. This contradict the assumption that $h\neq 0$. 

Next we consider the Case 2. We put $e_i\in H_{01}$, $e_j\in H_{02}$ and $\phi e_j=e_i$. Since $a_i=a_j=0$ and $\phi e_1\in H_1$, (10) and (34) imply
\begin{eqnarray*}
& &2cg(\phi e_i,e_j)=hg(\nabla_{e_i} e_j,e_1)-hg(\nabla_{e_j} e_i,e_1),\\
& &h^2 g(\nabla_{e_j} e_i,e_1)-h^2 g(\nabla_{e_i} e_j,e_1)\\
& &\quad =-2g(\phi e_i,e_j)(\phi e_1 f)c.
\end{eqnarray*}
From these equation, we have $\phi e_1 f=h\neq 0$. Since $\phi e_1\in H_1$, this is a contradiction.
\end{proof}

We consider the Case 3 and prove the following lemma.

\begin{lem} We have ${\rm{dim}}H_{01}=1$, ${\rm{dim}}H_{02}=0$, $\phi e_1=e_2\in H_{01}$ and $a_1=a_2=0, a_t \ne 0, t=3,\dots,2n-2$. Moreover, we have 
$$ h^2=-c, \ \ Df=\frac{h}{2}\phi e_1, \ \ \lambda=(2n+1)c, \ \ \mu=-\frac{5c}{2} $$.
\end{lem}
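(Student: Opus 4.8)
By Lemma~5.9, only Case~3 of Lemma~5.8 can occur; this already records $\dim H_{01}=1$, $\dim H_{02}=0$, $\phi H_1=H_1$ and $\phi e_1=e_2\in H_{01}$, and together with Lemma~5.4 (giving $a_1=0$) and the facts that $e_2\in H_{01}$ forces $a_2=0$ while $e_3,\dots,e_{2n-2}$ span $H_1$ and hence $a_t\neq0$ for $t\ge3$, this is the structural part of the statement. Since $\xi f=0$ (Lemma~5.2), $e_1 f=0$ (Lemma~5.5, applicable by Lemma~5.6), and $e_t f=0$ for every $e_t\in H_1$ (shown in the proof of Lemma~5.9), while in Case~3 the subspace $H_0$ is spanned by $\phi e_1$, we may write $Df=m\,\phi e_1$ with $m=(\phi e_1)f$; moreover $m\neq0$, since $Df=0$ would make $M$ pseudo-Einstein and hence Hopf by the Remark of \S2, contrary to hypothesis.

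The plan for the rest is to assemble enough scalar identities in $m,h,\lambda,\mu$ to force $h^2=-c$. First I would read off first-order data from the Codazzi list of Lemma~2.1 with $i=2$, $a_1=a_2=0$: equation~(11) gives $(\phi e_1)h=c+h^2$, and then equation~(7) gives $h\,g(\nabla_{e_1}\phi e_1,e_1)=c-h^2$; substituting these into the soliton--Codazzi identity~(31) with $j=2$ (and $\lambda_1=(2n+1)c-h^2$, $\lambda_2=(2n+1)c$, $g(\phi^2e_1,e_1)=-1$) yields
$$m=\frac{h(3c+h^2)}{4c}.$$
Next, applying (21) in the directions $X=Y=e_1$, $X=Y=\phi e_1$, $X=Y=\xi$ and evaluating the Hessian terms with the help of $\nabla_X\xi=\phi AX$ (so $\nabla_\xi\xi=h\,\phi e_1$) gives the three relations
$$\frac{m(c-h^2)}{h}=(2n+1)c-h^2-\lambda,\qquad (\phi e_1)m=(2n+1)c-\lambda,\qquad h^2-mh=(2n-2)c-\lambda-\mu ;$$
in particular $h^2-mh$ is constant.

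Differentiating $h^2-mh=\mathrm{const}$ along $\phi e_1$ and substituting $(\phi e_1)h=c+h^2$, $(\phi e_1)m=(2n+1)c-\lambda$, and the first of the three relations, everything collapses to $2mc=h^3+2hc$, i.e. $m=\tfrac{h(h^2+2c)}{2c}$. Comparing with $m=\tfrac{h(3c+h^2)}{4c}$ and cancelling $h\neq0$ gives $c+h^2=0$, so $h^2=-c$ and $c<0$; then $h^2$ is constant, $h$ is locally constant, $m=\tfrac h2$, and $Df=\tfrac h2\,\phi e_1$. Since $m$ is now constant, $(\phi e_1)m=(2n+1)c-\lambda$ forces $\lambda=(2n+1)c$, and $h^2-mh=(2n-2)c-\lambda-\mu$ then gives $\mu=-\tfrac{h^2}{2}-3c=-\tfrac{5c}{2}$. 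I expect the only real difficulty to be bookkeeping: correctly specializing the long Codazzi lists of Lemmas~2.1 and~3.4 to $i=j=2$ with $a_1=a_2=0$. The single conceptual step is observing that, the soliton data $\lambda,\mu,c$ being constants, $h^2-mh$ is constant, so that differentiating it along $\phi e_1$ and feeding in the purely geometric relation $m=h(3c+h^2)/(4c)$ rigidifies $h$ and determines all the constants.
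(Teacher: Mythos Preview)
Your argument is correct. The paper reaches the same conclusions by a closely related but slightly different path: it records the $(\xi,\xi)$ and $(e_1,e_1)$ instances of (21) as (54) and (55), eliminates $m$ algebraically to obtain the relation $(2c+\mu)h^{2}+c\{(2n-2)c-\lambda-\mu\}=0$ with constant coefficients, and then case-splits on whether $2c+\mu=0$. In the nondegenerate case $h$ is forced to be constant, whence (7) (now with $e_{2}h=0$) and (15) give two expressions for $h\,g(\nabla_{e_{1}}e_{1},e_{2})$ whose comparison yields $h^{2}=-c$; the degenerate case $2c+\mu=0$ is disposed of separately. Your route instead brings in the curvature identity (31) to produce a second closed-form expression $m=h(3c+h^{2})/(4c)$ and, by differentiating the constant $h^{2}-mh$ along $\phi e_{1}$ and comparing the two formulas for $m$, obtains $h^{2}=-c$ directly with no case analysis. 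Both arguments rest on the same basic ingredients (the Codazzi equations (7), (11), (15) and the soliton identity (21)); your appeal to (31) trades one additional input for a cleaner, case-free endgame.
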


\begin{proof}
From the same proof of Lemma 5.9, we have $e_t f=0$ for any $e_t\in H_1$. So we obtain $Df=m\phi e_1=me_2$ for some function $m$. By (37), we obtain
\begin{equation}
-mh=\{(2n-2)c-h^2\}-\lambda-\mu.
\end{equation}
On the other hand, by (21),
$$mg(\nabla_{e_1} e_2,e_1)=\lambda_1-\lambda.$$
Using (15), we have
$$hg(\nabla_{e_1} e_1, e_2)=-c+h^2.$$
Combining these equations, we obtain
\begin{equation}
h\{(2n+1)c-h^2 -\lambda\}=m(c-h^2).
\end{equation}
From (54) and (55), we have
\begin{equation}
(2c+\mu) h^2 + c\{(2n-2)c-\lambda-\mu\}=0.
\end{equation}
First we consider the case that $2c+\mu=0$. Then we have $\lambda=2nc$. Moreover (54) implies that $mh=h^2$. Since $h\neq 0$, we have $m=h$. Since $Df=he_2$, we have $g(\nabla_{e_2} Df, e_2)=e_2 h$. Thus, from (21), we obtain
$$e_2 h= \lambda_2-\lambda=c.$$
Combining this with (7) and (15), we have $h^2=0$. This is a contradiction.

Next we suppose that $2c+\mu\neq 0$. From (54) and (56), we see that $h$ and $m$ are constant. Using (7) and (15), we have
\begin{eqnarray*}
& &hg(\nabla_{e_1} e_1,e_2)=-2c,\\
& &hg(\nabla_{e_1} e_1,e_2)=-c+h^2.
\end{eqnarray*}
So we get $h^2+c=0$. 

Since $m$ is constant, we see that $g(\nabla_{e_2} Df, e_2)=0$. So we have
$$0=\lambda_2 - \lambda=(2n+1)c-\lambda.$$
Thus we have $\lambda=(2n+1)c$. Using (55), we obtain $h=2m$. So (54) implies that $\mu=-\frac{5c}{2}$.

\end{proof}

\begin{lem} {\rm Case 3} does not occur.
\end{lem}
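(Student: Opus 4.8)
The plan is to push the analysis of Case~3 to a scalar identity with an impossible sign. Recall from Lemma~5.10 that in Case~3 (after the relabelling $\phi e_1=e_2$) one has $a_1=a_2=0$, $a_t\neq 0$ for $t=3,\dots,2n-2$, $h$ constant with $h^2=-c$, $\lambda=(2n+1)c$, $\mu=-\tfrac{5}{2}c$, $Df=\tfrac{h}{2}e_2$, and (from (16)) $\tr A=\alpha$, $\sum_{t\ge 3}a_t=0$. Since $n\ge 3$ the distribution $H_1$ is non-trivial, $\dim H_1=2n-4\ge 2$, and since the $a_t$ are nonzero with vanishing sum, $A|_{H_1}$ is not scalar. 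The contradiction I aim for is that $\sum_{t\ge 3}a_t^2(\alpha-a_t)^2$ --- a sum of squares --- will come out equal to $-\tfrac{n-2}{2}h^4<0$.

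The first step, and the one I expect to be the main technical obstacle, is to make the relevant connection coefficients explicit by combining the soliton identity with the Codazzi equation. Writing (21) in $(1,1)$-form as $\nabla_X Df=SX-\lambda X-\mu\eta(X)\xi$ and using $\nabla_X e_2=(\nabla_X\phi)e_1+\phi\nabla_X e_1$ with $(\nabla_X\phi)e_1=0$ for $X\perp\xi,e_1$, one computes $\nabla_\xi e_2=-h\xi$ from $X=\xi$ (here $\beta-\lambda-\mu=\tfrac{c}{2}$) and $\nabla_{e_t}e_2=\tfrac{2}{h}(\lambda_t-\lambda)e_t=\tfrac{2}{h}a_t(\alpha-a_t)e_t$ from $X=e_t\in H_1$. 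Then the equation of Codazzi applied to the pair $(e_2,\xi)$, using $Ae_2=0$ and the value of $\nabla_\xi e_2$, yields simultaneously $\nabla_{e_2}e_1=0$ (hence $\nabla_{e_2}e_2=0$ and $\nabla_{e_2}\xi=0$, so that $\nabla_{e_2}e_t\in H_1$ for $e_t\in H_1$) and $e_2\alpha=h\alpha$. Finally, from (23), using $\phi A\xi=he_2$, $Se_2=(2n+1)c\,e_2$ and the scalar curvature formula $Sc=4(n^2-1)c-2h^2-\sum_{t\ge 3}a_t^2$, one gets $e_2\big(\sum_{t\ge 3}a_t^2\big)=h(2n-4)c=-(2n-4)h^3$. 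Note this step requires no information about how $\phi$ permutes the $A|_{H_1}$-eigenspaces.

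With these formulas, I would evaluate for each $e_t\in H_1$ the sectional curvature $g(R(e_2,e_t)e_t,e_2)$ in two ways. The equation of Gauss gives $R(e_2,e_t)e_t=ce_2$, hence $g(R(e_2,e_t)e_t,e_2)=c$. Expanding $R(e_2,e_t)e_t=\nabla_{e_2}\nabla_{e_t}e_t-\nabla_{e_t}\nabla_{e_2}e_t-\nabla_{[e_2,e_t]}e_t$, pairing with $e_2$, and using $g(\nabla_{e_t}e_t,e_2)=-\tfrac{2}{h}a_t(\alpha-a_t)$, $\nabla_{e_2}e_2=0$, $g(\nabla_{e_2}e_t,e_t)=0$, $[e_2,e_t]\in H_1$, $\nabla_{e_s}e_2=\tfrac{2}{h}a_s(\alpha-a_s)e_s$ and $e_2\alpha=h\alpha$, yields
$$g(R(e_2,e_t)e_t,e_2)=-\frac{2}{h}(e_2a_t)(\alpha-2a_t)-2a_t\alpha-\frac{4}{h^2}a_t^2(\alpha-a_t)^2 .$$
Equating this with $c=-h^2$ and summing over $t=3,\dots,2n-2$, the term $2\alpha\sum_t a_t$ drops out and $\sum_t(e_2a_t)(\alpha-2a_t)=\alpha\,e_2\big(\sum_t a_t\big)-e_2\big(\sum_t a_t^2\big)=(2n-4)h^3$, so the summed identity collapses to $-\tfrac{4}{h^2}\sum_t a_t^2(\alpha-a_t)^2=(2n-4)h^2$, that is, $\sum_{t\ge 3}a_t^2(\alpha-a_t)^2=-\tfrac{n-2}{2}h^4$. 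Since $n\ge 3$ and $h\neq 0$, the right-hand side is strictly negative while the left-hand side is nonnegative, and this contradiction shows that Case~3 cannot occur.
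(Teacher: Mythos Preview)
Your argument is correct and reaches the contradiction by a genuinely different route than the paper. The paper first spends most of its proof establishing a string of auxiliary constancy results along $H_1$, $e_1$, and $\xi$ (namely $e_s\alpha=0$, $e_sa_t=0$ for $s\ne t$, $e_sa_s=0$, $g(\nabla_{e_s}e_1,e_s)=0$, $e_1a_s=\xi a_s=0$), then combines (4), (21), (35) into the pair of relations (59)--(60) and uses (12), (26) for $e_2\alpha=h\alpha$; substituting these into one another and using $h^2=-c$ produces the quartic $2a_s^4-2a_s^3\alpha+\tfrac12c^2+a_sc\alpha=0$, hence $a_s\alpha=\dfrac{2a_s^4+c^2/2}{2a_s^2-c}>0$ for each $s$, contradicting $\sum_s a_s=0$. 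You instead read off $\nabla_Xe_2$ for all $X$ directly from the $(1,1)$-form of (21) (this is legitimate because $Df=\tfrac h2 e_2$ with $h$ constant, so $\nabla_Xe_2=\tfrac2h(SX-\lambda X-\mu\eta(X)\xi)$), apply Codazzi once to the pair $(e_2,\xi)$ to extract $\nabla_{e_2}e_1=0$ and $e_2\alpha=h\alpha$, use (23) for $e_2(\sum a_t^2)$, and compare the Gauss and Levi--Civita expressions for $g(R(e_2,e_t)e_t,e_2)$; summing over $t\ge3$ collapses the unknown $e_2a_t$ terms and yields $\sum_t a_t^2(\alpha-a_t)^2=-\tfrac{n-2}{2}h^4<0$. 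Your approach is considerably more economical: none of the paper's preliminary $H_1$-derivative identities are needed, and the summation trick avoids isolating $e_2a_t$. The paper's approach, on the other hand, gives the sharper pointwise output that each $a_s$ satisfies an explicit quartic; in fact, if one substitutes the paper's formula (60) for $e_2a_t$ into your per-$t$ curvature identity, one recovers exactly the paper's quartic relation, so the two computations are equivalent before summation.
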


\begin{proof}
By (11), we have $g(\nabla_\xi e_s, e_1)=0$ when $e_s\in H_1$. Thus (12) implies that $e_s \alpha=0$. Next, by (4) and (35), we have
\begin{eqnarray}
& &(a_s - a_t) g(\nabla_{e_t} e_s, e_t) -(e_s a_t)=0,\\
& &(\lambda_s-\lambda_t) g(\nabla_{e_t} e_s, e_t) - (e_s \lambda_j)=0.
\end{eqnarray}
when $e_s,e_t\in H_1$ and $s\neq t$. Combining these equations, we have
\begin{eqnarray*}
0&=& (a_s \alpha - a_s^2 - a_t\alpha + a_t^2)g(\nabla_{e_t} e_s, e_t) - e_s (a_t \alpha -a_t^2)\\
&=& (\alpha - a_s - a_t)(a_s - a_t) g(\nabla_{e_t} e_s ,e_t) + (2a_t -\alpha)(e_s a_t)\\
&=&(\alpha - a_s - a_t)(e_s a_t) + (2a_t - \alpha)(e_s a_t)\\
&=& (a_t - a_s)(e_s a_t).
\end{eqnarray*}
Thus, when $a_s\neq a_t$, we have $(e_s a_t)=0 $. On the other hand, by (4), if $a_s=a_t$, then we have $(e_s a_t)=0$. So we see that if $e_s,e_t\in H_1$ and $s\neq t$, then $e_s a_t=0$. Since $\sum_{t\geq 2}a_t=0$, we have
$$0=e_s (\sum_{t\geq 2} a_t)=e_s a_s.$$
Next, by (5) and (32), we obtain
\begin{eqnarray*}
& &-a_t g(\nabla_{e_t} e_1, e_t) - (e_1 a_t)=0,\\
& &(e_1 \lambda_s) + (a_s \alpha - a_s^2 + h^2) g(\nabla_{e_s} e_1, e_s)=0.
\end{eqnarray*}
From these equations, we have 
\begin{eqnarray*}
0&=&e_1 (a_s \alpha - a_s^2) + (a_s \alpha - a_s^2 + h^2) g(\nabla_{e_s} e_1, e_s)\\
&=&(\alpha - 2a_s) (e_1 a_s) + (a_s \alpha - a_s^2 + h^2) g(\nabla_{e_s} e_1, e_s)\\
&=& (a_s^2 + h^2)g(\nabla_{e_s} e_1, e_s).
\end{eqnarray*}
Here we note that (14) implies that $(e_1\alpha)=0$ since $h$ is constant by $h^2=-c$ in Lemma 5.10. Since $a_s^2 + h^2>0$, we have $g(\nabla_{e_s} e_1, e_s)=0$ for any $e_s \in H_1$. By (5) and (9), we obtain $e_1 a_s=0$, $\xi a_s=0$.

Next we consider $e_2 a_s$. From (4) and (35), we have
\begin{eqnarray*}
& & a_sg(\nabla_{e_s} e_2, e_s) = -e_2 a_s,\\
& &(a_s^2 - a_s\alpha)g(\nabla_{e_s} e_2 ,e_s) - (e_2 \lambda_s)=\frac{hc}{2},
\end{eqnarray*}
from which we obtain
\begin{equation}
a_s (e_2 a_s) - a_s (e_2\alpha)=\frac{hc}{2}.
\end{equation}

By (21), we obtain
$$g(\nabla_{e_s} Df, e_s)=\lambda_s - (2n+1)c = a_s\alpha - a_s^2.$$
Since $Df= \frac{h}{2} e_2$, we have
$$\frac{h}{2} g(\nabla_{e_s} e_2, e_s) = a_s\alpha -a_s^2.$$
By (4), we have
$$a_i g(\nabla_{e_s} e_2, e_s)+(e_2 a_s)=0.$$
From these equations, we have
\begin{equation}
h(e_2 a_s) = 2a_i(a_s^2 - a_s\alpha).
\end{equation}
Using (59), we obtain
\begin{equation}
2a_s^4 - 2a_s^3\alpha - \frac{1}{2}h^2 c - a_s h(e_2\alpha)=0.
\end{equation}
Next we compute $e_2\alpha$. By (12) and (26), we have $g(\nabla_\xi e_1,e_2)=0$ and
\begin{equation}
(e_2\alpha) = h\alpha.
\end{equation}
From (61), using $h^2=-c$, we obtain
$$2a_s^4 - 2a_s^3\alpha - \frac{1}{2}h^2 c - a_s h^2 \alpha=0.$$
So we obtain
$$a_s\alpha = \frac{2a_s^4 + \frac{c^2}{2}}{2a_s^2-c}>0.$$
We remark that $c<0$ by $h^2+c=0$, so we have $a_s^2-c>0$. Since $\sum_{s=1}^{2n-2} a_s =0$, this is a contradiction.
\end{proof}

From these lemmas we have our Theorem 5.1.\\

\section[6]{Hopf hypersurfaces}

In this section we consider the case that $M$ is a Hopf hypersurface of a non-flat complex space form $M^n(c)$, $n\geq 2$. We remark that a Hopf hypersurface satisfies the condition $S\xi =\beta\xi$ for some function $\beta$.

We take an orthonormal basis $\{\xi, e_1, \dots, e_{2n-2}\}$ such that
$$A\xi = \alpha \xi, \ \ Ae_j=a_j e_j, \ \ j=1,\dots, 2n-2.$$
We notice that $\alpha$ is a constant. We also obtain
$$S\xi = \beta \xi, \ \ Se_j=\lambda_j e_j, \ \ j=1,\dots, 2n-2,$$
where
\begin{eqnarray*}
\beta&=&(2n-2)c+\alpha\sum a_j,\\
\lambda_j&=&(2n+1)c+a_j{\rm tr}A-a_j^2,\\
Sc&=&4(n^2-1)c+({\rm tr}A)^2-\alpha^2-\sum a_j^2.
\end{eqnarray*}
By the equation of Codazzi, we have $\xi a_j=0$ for all $j=1,\dots, 2n-2$.
So we see that $\xi \beta =0, \xi \lambda_j=0, \xi( {\rm tr}A)=0$ and $\xi Sc=0.$
Moreover, from Proposition A, we have
\begin{eqnarray}
(2a_j-\alpha)\bar a_j=\alpha a_j+2c,
\end{eqnarray}
where we set $\bar a_j =g(A\phi e_j,e_j)$.

We show the following

\begin{theorem} Let $M$ be a Hopf hypersurface of a non-flat complex space form $M^n(c)$, $n\geq 2$. If $M$ admits a gradient pseudo-Ricci soliton, then $M$ is a pseudo-Einstein real hypersurface.
\end{theorem}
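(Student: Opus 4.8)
The plan is to recast the soliton equation in operator form and then play a Codazzi-type identity against the equation of Gauss on the pairs $(\xi,e_j)$.

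By (21) we have $g(\nabla_X Df,Y)=g(SX,Y)-\lambda g(X,Y)-\mu\eta(X)\eta(Y)$ for all $Y$, so $\nabla_X Df=TX$ with $T:=S-\lambda\,\mathrm{id}-\mu\,\eta\otimes\xi$ symmetric, $T\xi=(\beta-\lambda-\mu)\xi$, $Te_j=(\lambda_j-\lambda)e_j$; differentiating once more rewrites (22) as $R(X,Y)Df=(\nabla_XT)Y-(\nabla_YT)X$. Since $M$ is Hopf, $\phi A\xi=0$ and $\nabla_\xi\xi=0$, and by Section 6 $\alpha$ is constant and $\xi a_j=\xi\lambda_j=\xi Sc=0$. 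Computing $R(\xi,e_j)Df$ by the equation of Gauss (all $\phi\xi$-terms vanishing) gives
$$R(\xi,e_j)Df=(c+\alpha a_j)(e_j f)\,\xi-(c+\alpha a_j)(\xi f)\,e_j,$$
while $(\nabla_\xi T)e_j$ lies in $\ker\eta$ and is orthogonal to $e_j$ (because $\xi\lambda_j=0$ and $\nabla_\xi e_j\perp\xi$), and $(\nabla_{e_j}T)\xi=\big(e_j(\beta-\lambda-\mu)\big)\xi+a_j(\beta-\lambda-\mu)\phi e_j-a_jT\phi e_j$ has $\xi$-part $e_j(\beta-\lambda-\mu)$ and is orthogonal to $e_j$. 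Comparing $\xi$- and $e_j$-components yields, for every principal curvature $a_j$ of the holomorphic distribution $H$,
$$(c+\alpha a_j)(e_j f)=-e_j(\beta-\lambda-\mu),\qquad (c+\alpha a_j)(\xi f)=0.$$
I would also record $\mathrm{Hess}f(\xi,\xi)=\xi(\xi f)=\beta-\lambda-\mu$ and, from $\mathrm{Hess}f(\xi,e_j)=0$, the relation $e_j(\xi f)=a_j(\phi e_j f)$.

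Now argue by cases. If $\xi f\not\equiv 0$, then on the open set where it is nonzero $c+\alpha a_j=0$ for every $j$, so $\alpha\neq 0$ and $A|_H=-(c/\alpha)\,\mathrm{id}$; hence all the $\lambda_j$ agree, so $SX=aX+b\eta(X)\xi$, which by the Remark forces $a,b$ constant and $M$ pseudo-Einstein there, hence on all of $M$ — and one checks this branch is vacuous, since such an $M$ has two constant principal curvatures, so is of type A, whose structure-vector eigenvalue $\beta=(2n-2)c+\alpha\sum a_j$ is nonzero, contradicting the value $(2n-2)c+\alpha(2n-2)(-c/\alpha)=0$ forced here. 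If $\xi f\equiv 0$, then $\xi(\xi f)=0$, so $\beta=\lambda+\mu$ is constant and $\beta-\lambda-\mu=0$; moreover $e_j(\xi f)=a_j(\phi e_j f)$ gives $\phi e_j f=0$ whenever $a_j\neq 0$, so $\phi Df$ is orthogonal to every $e_j$ with $a_j\neq 0$, i.e. $\phi Df\in\ker(A|_H)$. If $\ker(A|_H)=0$ then $\phi Df=0$ and $Df=-\phi(\phi Df)=0$ (since $\eta(Df)=\xi f=0$). If $\ker(A|_H)\neq 0$ then $\alpha\neq 0$ and, applying Proposition A to a vector of $\ker(A|_H)$, $\phi$ carries $\ker(A|_H)$ onto the $(-2c/\alpha)$-eigenspace of $A$, so $Df$ lies in that eigenspace; but $(c+\alpha a_j)(e_j f)=-e_j(\beta-\lambda-\mu)=0$ forces $e_j f=0$ unless $a_j=-c/\alpha$, so $Df$ also lies in the $(-c/\alpha)$-eigenspace. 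As $-c/\alpha\neq -2c/\alpha$, these intersect only in $0$, so $Df=0$. In every case $Df=0$, hence $\mathrm{Hess}f=0$, $Ric=\lambda g+\mu\eta\otimes\eta$, and $M$ is pseudo-Einstein.

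The two component computations of $R(\xi,e_j)Df$ are routine. The substance lies in two places: making the $\xi f\not\equiv 0$ branch airtight, since passing from an open set to all of $M$ needs either real-analyticity of Hopf hypersurfaces or the classification of real hypersurfaces with two constant principal curvatures; and reading off from Proposition A the precise $\phi$-pairing of the $A$-eigenspaces — that $\ker(A|_H)$ is $\phi$-conjugate to the $(-2c/\alpha)$-eigenspace while $Df$ is independently pinned to the $(-c/\alpha)$-eigenspace — which is exactly the incompatibility that kills $Df$.
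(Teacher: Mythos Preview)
Your $\xi f\equiv 0$ branch is correct and in fact takes a genuinely different route from the paper. Where the paper, for $a_j=0$, computes $g(R(\phi e_j,e_j)Df,e_j)=-4c\,g(\phi e_j,Df)$ via Gauss and then invokes a Codazzi identity to kill $g(\nabla_{e_j}\phi e_j,e_j)$, you instead pin $Df$ simultaneously to the $(-c/\alpha)$- and $(-2c/\alpha)$-eigenspaces of $A$: the first from $(c+\alpha a_k)(e_kf)=-e_k\beta=0$, the second from $a_k(\phi e_kf)=0$ combined with the $\phi$-pairing of Proposition~A. Since $c\neq 0$ these eigenspaces are distinct, so $Df=0$. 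That is an elegant alternative argument.

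The $\xi f\not\equiv 0$ branch, however, has a real gap. Your dismissal of this case rests on the assertion that a Hopf hypersurface with $A|_H=(-c/\alpha)\,\mathrm{id}$ is ``of type A'' with $\beta\neq 0$; neither claim is justified, and you yourself flag the open-set-to-global extension as problematic. Both worries are unnecessary, because the equation $(c+\alpha a_j)(\xi f)=0$ is \emph{pointwise} and the branch is ruled out at each point by Proposition~A alone, as the paper does: if at some point every $a_j=-c/\alpha$, then also $\bar a_j=-c/\alpha$, and $(2a_j-\alpha)\bar a_j=\alpha a_j+2c$ becomes $2c^2/\alpha^2+c=c$, forcing $c=0$. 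Hence at every point some $c+\alpha a_j\neq 0$, and $\xi f\equiv 0$. You had all the ingredients for this two-line computation; replacing your type-A detour with it makes the proof complete.
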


\begin{proof}
By (22), we have
$$g(R(\xi,e_j)e_j,Df)=0.$$
From the equation of Gauss,
$$g(R(\xi,e_j)e_j,Df)=(c+\alpha a_j)g(\xi, Df).$$
Hence we have
$$(c+\alpha a_j)g(\xi, Df)=0.$$
If $c+\alpha a_j=0$ for all $j$, then $a_j=a_i$ for all $i, j$. We put $a=a_j, j=1,\dots, 2n-2$. Then (63) implies $(2a-\alpha)a=c$. Hence we have
$$2a^2=c+\alpha a =0.$$
Thus we obtain $a=0$ and $c=0$. This is a contradiction.

So there exists $j$ such that $c+\alpha a_j\ne 0$, from which we obtain $g(\xi, Df)=0$. This shows $\xi f=0$. Therefore, by (21),
$${\rm Hess}f(e_j,\xi)=-g(Df,\phi Ae_j)=-a_j g(Df,\phi e_j)=0.$$
If $a_j\ne0$, then $g(Df,\phi e_j)=0$. When $a_j=0$, (63) implies $-\alpha \bar a_j=2c$. Thus we see that
$$\bar a_j=-\frac{2c}{\alpha}$$
is constant. Moreover, the equation of Gauss and (22) imply
\begin{eqnarray*}
g(R(\phi e_j,e_j)Df, e_j) &=& -4cg(\phi e_j, Df)\\
&=& -\bar a_j({\rm tr}A-\bar a_j)g(\nabla_{e_j}\phi e_j,e_j).\nonumber
\end{eqnarray*}
From the equation of Codazzi, we also have
$$\bar a_j g(\nabla_{e_j}\phi e_j, e_j)=0.$$
Hence we have $4cg(\phi e_j,Df)=0$. Consequently for all $j$, we have
$$g(\phi e_j,Df)=0.$$
Thus, we have $Df=0$, which proves our assertion.

\end{proof}

From Theorem 4.1, Theorem 5.3 and Theorem 6.1, we have Theorem 1.1. Regarding a Ricci soliton, we will state the following
\begin{theorem}
Let $M$ be a real hypersurface of a non-flat complex space form $M^n(c)$ and suppose that the Ricci tensor $S$ of $M$ satisfies $S\xi=\beta\xi$ for some function $\beta$. Then $M$ does not admit a gradient Ricci soliton.
\end{theorem}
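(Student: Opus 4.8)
The plan is to read off the conclusion from Theorem~1.1 together with the extra data that is actually produced inside the proofs of Theorem~4.1 and Theorem~6.1, using the trivial but essential observation that a gradient Ricci soliton is precisely a gradient pseudo-Ricci soliton with $\mu=0$. So suppose, for contradiction, that $M$ admits a gradient Ricci soliton, i.e.\ ${\rm Hess}f=Ric-\lambda g$ with $\mu=0$. Since the Ricci tensor satisfies $S\xi=\beta\xi$, Theorem~1.1 applies and tells us that $M$ is either a pseudo-Einstein real hypersurface of $M^n(c)$, or a $3$-dimensional ruled real hypersurface of $M^2(c)$ with $c<0$. I would then rule out each alternative.

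First I would dispose of the ruled case. A ruled real hypersurface is non-Hopf, so for $n=2$ Theorem~4.1 is applicable and it states that every gradient pseudo-Ricci soliton on such a hypersurface necessarily has $\mu=-\frac{5}{2}c$. Since $c\neq 0$, this is incompatible with $\mu=0$, so a ruled real hypersurface cannot carry a gradient Ricci soliton.

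Next I would treat the pseudo-Einstein case. A pseudo-Einstein real hypersurface is automatically a Hopf hypersurface, so Theorem~6.1 (and, more precisely, its proof) applies: there it is shown that a Hopf hypersurface with $S\xi=\beta\xi$ admitting a gradient pseudo-Ricci soliton satisfies $\xi f=0$ and $g(\phi e_j,Df)=0$ for every $j$, and hence $Df=0$. Thus the soliton is trivial, ${\rm Hess}f=0$, and the soliton equation degenerates to $Ric=\lambda g$ because $\mu=0$. In other words $M$ would be an Einstein real hypersurface of a non-flat complex space form, and no such hypersurface exists. This contradiction, together with the ruled case, shows that $M$ admits no gradient Ricci soliton.

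Since the whole argument is essentially bookkeeping built on the earlier theorems, there is no genuinely hard step; the one point that must be handled with care is that Theorem~6.1 is phrased merely as ``$M$ is pseudo-Einstein'', whereas here I need the sharper conclusion, visible in its proof, that the potential gradient $Df$ itself vanishes. Without this, a pseudo-Einstein hypersurface with $SX=aX+b\eta(X)\xi$, $b\neq0$, would still leave open a non-constant $f$ with ${\rm Hess}f=(a-\lambda)g+b\,\eta\otimes\eta$, and excluding that directly would force one to re-run the curvature computations of Section~6 rather than simply quoting the theorem.
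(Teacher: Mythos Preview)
Your proposal is correct and follows essentially the same route as the paper. The paper splits directly into the non-Hopf case (where the analysis of Sections~4 and~5 forces $\mu=-\tfrac{5}{2}c$) and the Hopf case (where the proof of Theorem~6.1 gives $Df=0$, hence Einstein when $\mu=0$); you reach the same dichotomy by first invoking Theorem~1.1, but the substance is identical, and your explicit remark that one needs the stronger conclusion $Df=0$ from the proof of Theorem~6.1---rather than merely ``pseudo-Einstein''---is exactly the point the paper is tacitly using.
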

\begin{proof}
When $M$ admits a gradient Ricci soliton, we have $\mu=0$. Suppose $M$ is not Hopf. Then, in the proof of Theorem 5.10, we see $\mu=-\frac{5}{2}c$. This is a contradiction. Let $M$ be a Hopf hypersurface. Then, from Theorem 6.1 and $\mu=0$, $M$ is an Einstein real hypersurface. This is also a contradiction. Hence $M$ does not admit a gradient Ricci soliton.
\end{proof}
This implies Theorem 1.2. From Theorem 5.1 and Theorem 6.1, we also have the following result.

\begin{theorem} Let $M$ be a real hypersurface of a non-flat complex space form $M^n(c)$. Suppose $n\geq 3 $ and the Ricci tensor $S$ of $M$ satisfies $S\xi=\beta\xi$ for some function $\beta$. If $M$ admits a gradient pseudo-Ricci soliton, then $M$ is a pseudo-Einstein real hypersurface.
\end{theorem}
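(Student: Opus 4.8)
The final statement, Theorem 6.4, asserts that for $n\geq 3$ and $S\xi=\beta\xi$, a real hypersurface admitting a gradient pseudo-Ricci soliton must be pseudo-Einstein. The plan is to simply assemble the two structural results already proved in Sections 5 and 6. By Lemma 2.2, since $S\xi=\beta\xi$ we have $\dim L_x\leq 2$, so at each point $M$ is either Hopf or non-Hopf of the special form given in Lemma 2.1, namely $A\xi=\alpha\xi+he_1$, $Ae_1=a_1e_1+h\xi$, $Ae_j=a_je_j$ with $h\neq 0$ on the non-Hopf locus.

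First I would dispose of the non-Hopf case. By Theorem 5.1, under the hypotheses $n\geq 3$, $S\xi=\beta\xi$, and the existence of a gradient pseudo-Ricci soliton, $M$ must in fact be Hopf; equivalently the open set where $h\neq 0$ is empty. (This is where all the work of Section 5 is concentrated — Lemmas 5.1 through 5.11 — culminating in the contradiction in the proof of Lemma 5.11 from $a_s\alpha=(2a_s^4+\tfrac{c^2}{2})/(2a_s^2-c)>0$ against $\sum_{s} a_s=0$.) So I may assume $M$ is Hopf throughout.

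Next, with $M$ Hopf, I invoke Theorem 6.1: a Hopf hypersurface of a non-flat complex space form admitting a gradient pseudo-Ricci soliton is pseudo-Einstein. That argument runs through the equation of Gauss applied to $g(R(\xi,e_j)e_j,Df)=0$ to force $\xi f=0$, then through $\mathrm{Hess}f(e_j,\xi)=-a_jg(Df,\phi e_j)=0$ and the equation of Codazzi (via $\bar a_j g(\nabla_{e_j}\phi e_j,e_j)=0$ together with $g(R(\phi e_j,e_j)Df,e_j)=-4cg(\phi e_j,Df)$) to force $g(\phi e_j,Df)=0$ for every $j$, hence $Df=0$. A gradient pseudo-Ricci soliton with $Df=0$ has constant potential function, which by the remark following Definition 3.2 makes $M$ pseudo-Einstein.

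Combining these two cases yields the theorem: in the Hopf case $M$ is pseudo-Einstein by Theorem 6.1, and the non-Hopf case cannot occur by Theorem 5.1 (which requires $n\geq 3$). The only genuine obstacle is the non-Hopf analysis of Section 5, which is already established; the assembly here is immediate. I would also remark that this is the $n\geq 3$ analogue of the combined Theorem 4.1 (the $n=2$ case), the difference being that for $M^2(c)$ the ruled example with $h^2=-c$ genuinely arises, whereas for $n\geq 3$ Theorem 5.1 rules out all non-Hopf possibilities.
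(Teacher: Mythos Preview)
Your proposal is correct and matches the paper's own argument: the paper simply states that the result follows from Theorem 5.1 and Theorem 6.1, which is exactly the assembly you describe. If anything, you give more detail than the paper does, but the logical structure is identical.
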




\begin{thebibliography}{9}

\bibitem{BD} J. Berndt, and J. C. D\'iaz-Ramos,
Real hypersurfaces with constant principal curvatures in the complex hyperbolic plane, Proc. A. M. S. \textbf{135} (2007), 3349--3357.

\bibitem{Bl} D. E. Blair, Contact manifolds in Riemannian geometry, Lecture notes in mathematics;509, Springer-Verlag, 1976.

\bibitem{CR} T. E. Cecil and P. J. Ryan, 
Focal sets and real hypersurfaces in complex projective space, Trans. Amer. Math. Soc. \textbf{269} (1982), 481--499.

\bibitem{Ch} B. Y. Chen, Ricci solitons on Riemannian submanifolds, Proceedings of the Conference RIGA 2014, Riemannian Geometry and Applications to Engineering and Economics, Bucharest, Romania.

\bibitem{CK} J. T. Cho and M. Kimura, Ricci solitons of compact real hypersurfaces in Kaehler manifolds, Math. Nachr. \textbf{284}, No. 11–12, (2011),1385--1393.

\bibitem{Fi} A. Fialkow, Hypersurfaces of a space of constant curvature, Ann. of Math. (2) \textbf{39} (1938), 762--785.

\bibitem{IR} T. A. Ivey and P. J. Ryan, Hopf hypersurfaces of small Hopf principal curvature in $CH^2$, Geom Dedicata \textbf{141} (2009), 147--161.

\bibitem{KR} H. S. Kim and P. J. Ryan, 
A classification of pseudo-Einstein hypersurfaces in $CP^2$, Differential Geom. \textbf{26}, (2008), 106-112.

\bibitem{Ki} M. Kimura,
Sectional curvatures of holomorphic planes on a real hypersurface in $P^n(c)$, Math. Ann. \textbf{276}. (1987), 487--497. 

\bibitem{Ko} Masahiro Kon,
Pseudo-Einstein real hypersurfaces in complex space forms, J. Differential Geom. \textbf{14} (1979), 339--354. 

\bibitem{MK} Mayuko Kon,
3-dimensional real hypersurfaces and Ricci operator, Diff. Geom. Dyn. Syst. \textbf{16} (2014), 189--202.

\bibitem{MK2} Mayuko Kon,
Ricci tensor of real hypersurfaces, Pacific Journal of Mathematics, \textbf{281} (2016), 103--123.

\bibitem{MK3}Mayuko Kon,
On pseudo-Einstein real hypersurfaces, Advances in Geometry, \textbf{20} (2019), 559--571.

\bibitem{LR} M. Lohnherr and H. Reckziegel,
On ruled real hypersurfaces in complex space forms, Geom. Dedicata \textbf{74}, (1999), 267--286. 

\bibitem{MAK} S. Maeda, T. Adachi and Y. H. Kim,
A characterization of the homogeneous minimal ruled real hypersurface in a complex hyperbolic space, J. Math. Soc. Japan
\textbf{61}, (2009). 315--325

\bibitem{Ma} Y. Maeda,
On real hypersurfaces of a complex projective space, J. Math. Soc. Japan \textbf{28} (1976), 529--540.

\bibitem{Mo} S. Montiel,
Real hypersurfaces of a complex hyperbolic space, J. Math. Soc. Japan \textbf{37} (1985), 515--535.


\bibitem{Pe} G. Perelman, The entropy formula for the Ricci flow and its geometric applications, 2002, Zbl 1130.53001; arXiva math/0211159.

\bibitem{Sm} B. Smyth, Differential geometry of complex hypersurfaces, Ann. of Math. (2) \textbf{85} (1967), 246--266.


\bibitem{Th} T. Y. Thomas, Extract from a letter by E. Carian concerning my note: On closed spaces of constant
mean curvature, Amer. J. Math. \textbf{59} (1937), 793--794.

\bibitem{YK} K. Yano, M. Kon, Structures on Manifolds, World, Scientific, 1984.

\end{thebibliography}
\end{document}